\theoremstyle{plain}
\newtheorem{lemma}{Lemma}
\newtheorem{definition}{Definition}
\newtheorem{corollary}{Corollary}
\newtheorem{proposition}{Proposition}
\newtheorem{theorem}{Theorem}
\newtheorem{conjecture}{Conjecture}
\newtheorem{question}{Question}
\newtheorem{remark}{Remark}
\newtheorem{problem}{Problem}
\newtheoremstyle{derp}
{3pt}
{3pt}
{}
{}
{\upshape}
{:}
{.5em}
{}
\theoremstyle{derp}
\newtheorem{example}{Example}
\newcommand{\Z}{\mathbb{Z}}
\newcommand{\N}{\mathbb{N}}
\newcommand{\M}{\mathbb{M}}
\newcommand{\B}{\mathcal{L}}
\newcommand{\INF}{{}^\infty}
\newcommand{\QW}{\mathrm{Q}}
\newcommand{\CB}{\mathrm{CB}}
\newcommand{\OC}[1]{\overline{\mathcal{O}(#1)}}
\newcommand\xqed[1]{%
  \leavevmode\unskip\penalty9999 \hbox{}\nobreak\hfill
  \quad\hbox{#1}}
\newcommand\qee{\xqed{$\triangle$}}
\title{Decidability and Universality of Quasiminimal Subshifts}
\author{
Ville Salo \\
vosalo@utu.fi
}
\begin{document}
\maketitle

\begin{abstract}
We introduce the quasiminimal subshifts, subshifts having only finitely many subsystems. With $\N$-actions, their theory essentially reduces to the theory of minimal systems, but with $\Z$-actions, the class is much larger. We show many examples of such subshifts, and in particular construct a universal system with only a single proper subsystem, refuting a conjecture of [Delvenne, K\r{u}rka, Blondel, '05].
\end{abstract}


\section{Introduction}
\label{sec:Intro}

One of the most studied classes of subshifts\footnote{Here, subshifts are sets $X \subset S^M$ which are topologically closed, and also closed under the shift action of $M$ in the sense $X \cdot M \subset X$, where $M$ is a monoid and $S$ a finite set. In this paper, we consider the cases $M \in \{\N, \Z\}$.} in the literature is that of minimal subshifts. These are precisely the nonempty subshifts containing no proper nonempty (sub-)subshifts. Some reasons that these subshifts are of great interest are that every subshift contains a minimal subshift, and many natural examples of subshifts, such as those generated by primitive substitutions and those generated by Toeplitz sequences, are minimal. We direct the reader to \cite{Au88,Ku03} for a discussion of such systems. More generally, dynamical systems on compact metric spaces always contain minimal subsystems.

In \cite{DeKuBl06}, zero-dimensional dynamical systems with an $\N$-action and an effective presentation, called \emph{symbolic systems}, are studied from the point of view of computational universality. These systems generalize one-sided subshifts whose language is recursive, and also contain many other symbolic systems such as Turing machines, counter machines and tag systems. Given any finite clopen partition of a space and a Muller language of infinite words (one of the notions of regularity for infinite words) with partition elements as letters, the \emph{model-checking problem} is defined as the problem of checking whether one of the sequences in the language corresponds to a sequence of observations along an orbit. The model-checking problem for regular languages is the question of whether a finite sequence of observations in the language can be made in the system. A system is called \emph{decidable} if the model-checking problem for Muller languages is decidable, and \emph{universal} if its model-checking problem for regular languages is $\Sigma^0_1$-complete (that is, the halting problem many-one reduces to it). In particular, a universal system cannot be decidable, but a gap is left between the two definitions, so that both decidability and undecidability results are maximally strong.

In the case of subshifts, the model-checking problem for regular languages amounts to asking whether the intersection of the language of the subshift with a given regular language is empty, and we use this as the definition, omitting the details of computable presentations.

One of the main results of \cite{DeKuBl06} is that if a minimal system is computable\footnote{In their paper, computable means recursive, or having $\Delta^0_1 = \Sigma^0_1 \cap \Pi^0_1$ language, but see Theorem~\ref{thm:MinimalRecursive}. }, then it is decidable in the above sense. This shows in particular that many non-trivial things can be computed about minimal subshifts, as long as the forbidden patterns of the subshift can be enumerated. For example, given\footnote{It is not explicitly stated in \cite{DeKuBl06} that the algorithm is uniform in the description of the subshift, but this is clear from their proof.} a Turing machine enumerating the forbidden patterns of a (nonempty) minimal subshift $X$ over the alphabet $\{0, 1, 2\}$, an algorithm can check whether there exists a subword $w$ of a point of $X$ where the number of $1$s differs from the number of $0$s by more than 7 while simultaneously the number of $2$s in $w$ is not divisible by 5.

It is not particularly hard to show that if the forbidden words of $X$ can be enumerated and $X$ is minimal, then also the words that do occur in $X$ can be enumerated (see Theorem~\ref{thm:MinimalRecursive}), so that if such $w$ exists, a rather simple algorithm can find it. The algorithm of \cite{DeKuBl06} must be even smarter: if no $w$ with this list of properties exists, then after enumerating some finite number of forbidden patterns, the algorithm will state this fact.

The algorithm of \cite{DeKuBl06} applies more generally to systems where every proper subsystem has nonempty interior, and to systems whose limit set is a finite union of minimal systems.\footnote{See Section~\ref{sec:NActions} for an application of this result.} 
It is interesting to ask where the precise border of decidability lies, by extending the family of subshifts further, and to this end the authors also make the following conjecture.

\begin{conjecture}[\cite{DeKuBl06}]
\label{con:IntroManySubsystems}
A universal symbolic system has infinitely many minimal subsystems.
\end{conjecture}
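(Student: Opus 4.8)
The statement above is a \emph{conjecture}, and the aim is to refute it, so what I would actually do is construct a counterexample: a subshift $X$ that is universal yet has exactly one proper nonempty subsystem. It must be a $\Z$-subshift, since a quasiminimal $\N$-subshift is essentially minimal and a computable minimal subshift is decidable by the result of \cite{DeKuBl06} quoted above. Such an $X$ is automatically ``almost minimal'': its unique proper subsystem $Y_0$ is forced to be minimal, and every $x\in X\setminus Y_0$ is transitive, since its orbit closure is a subsystem containing a point outside $Y_0$ and hence equals $X$. Conversely, to obtain such an $X$ it suffices to produce a transitive, non-minimal, computable subshift every point of which off some minimal $Y_0$ is transitive. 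This is exactly what is needed to slip past both decidability criteria of \cite{DeKuBl06}: $Y_0$ is a proper subsystem with empty interior, and the limit set of $X$ is all of $X$, which is not a finite union of minimal subsystems. So the goal is to hide the halting problem inside the language of such an $X$.

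For the universality half I would fix a universal machine and arrange that $\mathcal{L}(X)$ contains, for every halting machine $M$, a word witnessing a halting run of $M$, encoded as a space-time diagram in the standard way so that ``locally consistent computation'' is a forbidden-pattern condition and ``reaches a halting state'' is local, and contains no spurious witnesses. The reduction then sends $M$ to the regular language $L_M$ of candidate halting diagrams of $M$, and $M$ halts iff $\mathcal{L}(X)\cap L_M\neq\emptyset$. Taking $Y_0$ computable, say a primitive substitution subshift, and keeping all combinatorial constraints decidable makes $\mathcal{L}(X)$ recursive, so $X$ is a genuine symbolic system.

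The tension is that universality forces arbitrarily long computation segments into $\mathcal{L}(X)$, and by compactness a naive construction then contains ``infinite-computation'' limit points whose orbit closures are thin subsystems strictly between $Y_0$ and $X$. A natural route around this is a self-similar skeleton: take $Y_0$ carrying a recognisable infinite hierarchy of nested slots (a substitutive or multilevel-Toeplitz structure), let $X$ allow each slot to be \emph{decorated}, on a track above the skeleton, by a fixed finite trace of a dovetailing universal computation run for about the length of that slot, and impose that the decorated slots form an upward-closed, non-branching ``spine'' in the hierarchy. Then a decorated point carries a decorated slot at every sufficiently large level, hence contains every finite dovetailing trace, hence every halt-witness, and---the decoration sitting on a $Y_0$-skeleton---every $Y_0$-word; as decoration is standardised, such a point realises every word of $\mathcal{L}(X)$ and is therefore transitive. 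Undecorated points are exactly the points of $Y_0$, and self-similarity guarantees that any limit that zooms into a decorated region keeps its full hierarchical context and stays decorated, so no thin limit points appear; the subsystems are then exactly $\emptyset$, $Y_0$ and $X$.

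I expect the main obstacle to be the subsystem analysis rather than the computation-coding (which is routine by now, though fiddly to thread through the hierarchy: one must fit the level-$k$ computation into the room available at level $k$ without colliding with lower levels). One has to check that the constraints are simultaneously satisfiable, that ``upward-closed plus non-branching spine'' is a closed condition so that $X$ really is a subshift, that it forces every point outside $Y_0$ to be transitive, and that $Y_0$ is minimal and properly contained in $X$. I would also verify the boundary behaviour of the reduction---that no regular language can witness a ``fake'' halt---which is what pins down exactly which decorated configurations $X$ is allowed to contain.
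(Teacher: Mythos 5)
Your verdict (refute by counterexample) is right, but you are solving a much harder problem than this statement requires, and the hard part of your plan is exactly the part you leave open. The paper's refutation (Proposition~\ref{prop:OneMinimal}) exploits the fact that the conjecture counts only \emph{minimal} subsystems: it takes the countable subshift generated by the points ${^\infty 0}.1^i2^{i+h(i)}3^\infty$ (when $T_i$ halts in $h(i)$ steps; ${^\infty 0}.1^i2^\infty$ otherwise). This subshift is recursive, sits inside the countable SFT $\B^{-1}(0^*1^*2^*3^*)$, has infinitely many subsystems but only unary fixed points as minimal subsystems, and its halting problem is $\Sigma^0_1$-complete because halting times are encoded in run lengths -- no hierarchy, no computation coding, no transitivity analysis. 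The decidability criteria of \cite{DeKuBl06} are bypassed precisely by allowing infinitely many non-minimal subsystems. Your plan -- a universal subshift with exactly one proper nonempty subsystem, every point off the minimal skeleton generating the whole system -- is the paper's later and stronger result (Theorem~\ref{thm:TransitiveLocallyTestableLanguages}, answering Question~\ref{q:IntroWeaker}), not what this conjecture needs; note also that the conjecture concerns $\N$-systems, so either way one still needs the transfer along the lines of Proposition~\ref{prop:Correspondence}.

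Judged as a sketch of that stronger construction, there are two genuine gaps. First, the subsystem analysis you defer is the entire content: you must exclude intermediate subsystems coming from limit points that are decorated on only one side, or in which the decoration degenerates, and your ``upward-closed, non-branching spine'' is neither shown to be a closed condition nor shown to force weak transitivity of every decorated point. The paper handles exactly this in Lemma~\ref{lem:Construction} by placing markers along a uniformly recurrent sequence with unique singularities (the ruler sequence): uniform recurrence gives transitivity in the two-sided and one-sided cases, and unique singularities rule out points with only finitely many markers. Second, your universality mechanism is shaky as stated: written along a line, ``locally consistent computation'' of a space-time diagram is \emph{not} a forbidden-pattern/local condition, since consecutive configurations must be compared at unbounded distance; you would instead have to guarantee by construction that only genuine computation segments occur and that no regular language can see a spurious halt. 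The paper avoids diagrams entirely by encoding ``$T_j$ halts'' in whether the prefix-code word $u_j$ is ever followed by the suffix-code word $v_{j+1}$ rather than $v_j$ (Lemma~\ref{lem:PrefixCode}), so the reduction language is just $0u_j[1,k]^*v_{j+1}0$ and recursiveness of the language is immediate from dovetailing.
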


Conversely, this conjecture would imply that a system with finitely many minimal subsystems has to be non-universal (even if not necessarily decidable). Note that this conjecture talks about infinitely many \emph{minimal} subsystems, but allows us to have any number of subsystems in general. It turns out that this is not a very strong condition, and as such, the conjecture is false.

\begin{proposition}
\label{prop:IntroOneMinimal}
There exists a recursive universal subshift with finitely many minimal subsystems.
\end{proposition}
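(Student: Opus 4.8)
\emph{Strategy.}
The plan is to take a standard recursive subshift $Y$ that ``hides'' a universal Turing machine, and then \emph{dilute} it: insert between the informative symbols blocks of a quiescent symbol $0$ whose lengths are forced to grow without bound, so that the only point of the resulting subshift $X$ that fails to contain arbitrarily long runs of $0$ is the fixed point $0^{\Z}$. This collapses the minimal subsystems to the single point $\{0^{\Z}\}$, yet the dilution is mild enough that $X$ stays recursive and universal.

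\emph{The machine subshift.}
Fix a universal Turing machine $U$ with a quiescent tape symbol, a left-marked head, and a designated halting state, and let $\mathrm{next}$ be its one-step map on configurations. Let $Y$ be the subshift over $A = \{0,\#\}\cup(\text{head/tape-cell symbols})$ whose points avoid the recursive family of forbidden patterns: local well-formedness violations inside a configuration block, and any window containing two complete consecutive configurations $\#\,C\,\#\,C'\,\#$ with $C'\neq\mathrm{next}(C)$. A finite word is in $L(Y)$ iff it avoids these patterns and passes a decidable consistency check at its two ends (a block at the right must agree with $\mathrm{next}$ of the previous one; a block at the left must admit one of boundedly many $\mathrm{next}$-preimages, or be preceded by $\#$ and then $0^{\infty}$), so $Y$ is recursive. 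Writing $w_N$ for an input on which $U$ simulates the machine $N$, and $C^{(N)}_0$ for the initial configuration of $U$ on $w_N$, the regular language $R_N=\{\text{words with }\#\,C^{(N)}_0\,\#\text{ and a halting-state cell to its right}\}$ satisfies $L(Y)\cap R_N\neq\emptyset$ iff $U(w_N)$ halts iff $N$ halts, because $U$ is deterministic; hence the halting problem many-one reduces to model checking for $Y$. But $Y$ has infinitely many minimal subsystems (the bi-infinite computation histories of non-halting runs), so it is not itself the example.

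\emph{The diluted subshift.}
Fix $c>|A|$ and identify $A$ with $\{0,1,\dots,|A|-1\}$. Let $X$ be the subshift over $\{0,\bullet\}$ whose points are $0^{\Z}$ together with all configurations of the form $0^{\infty}\,\bullet\,0^{g_1}\,\bullet\,0^{g_2}\,\bullet\cdots$ (finite or right-infinite, with only $0$s left of the first $\bullet$) and their shifts, subject to: writing $g_i=c\,r_i+\gamma_i$ with $0\le\gamma_i<c$, one requires $r_{i+1}=r_i+1$ and $\gamma_1\gamma_2\cdots\in L(Y)$ (every finite prefix in $L(Y)$, in the right-infinite case). It is routine that $X$ is closed (when the window length blows up the limit is $0^{\Z}$ unless recentred at the leading $\bullet$, giving a right-infinite diluted point) and recursive (parse the $\bullet$'s, decode the interior gaps, check $r_{i+1}=r_i+1$ arithmetically, and run the $L(Y)$-test on the decoded content). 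The key point: $r_{i+1}=r_i+1$ forces $g_i\ge c\,r_i\to\infty$, so every point of $X$ other than $0^{\Z}$ has either finitely many $\bullet$'s or gaps tending to infinity; in both cases it contains arbitrarily long runs of $0$, hence $0^{\Z}\in\OC{x}$ for every $x\in X$. Therefore $\{0^{\Z}\}$ is the unique minimal subsystem of $X$, and in particular $X$ has finitely many minimal subsystems.

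\emph{Universality, and the main obstacle.}
Given $N$, let $R'_N$ be the set of words over $\{0,\bullet\}$ that begin and end with $\bullet$ and whose sequence of $0$-run lengths, read modulo $c$, contains the fixed word $\#\,C^{(N)}_0\,\#$ followed somewhere later by the halting-state symbol; a finite automaton can maintain each run length modulo $c$ and do the substring match, so $R'_N$ is regular and computable from $N$. Since every finite word of $X$ beginning and ending with $\bullet$ has interior gaps whose residues modulo $c$ form a word in $L(Y)$, we get $L(X)\cap R'_N\neq\emptyset$ iff $L(Y)\cap R_N\neq\emptyset$ iff $N$ halts; and $\{R : L(X)\cap R\neq\emptyset\}$ is $\Sigma^0_1$ because $L(X)$ is decidable. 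Thus $X$ is a recursive universal subshift with a single minimal subsystem, refuting Conjecture~\ref{con:IntroManySubsystems}. I expect the genuinely delicate point to be designing the dilution so that it \emph{simultaneously} collapses the minimal subsystems --- which demands that every nontrivial point be sparse, enforced here by $r_{i+1}=r_i+1$ --- and preserves universality --- which still requires a regular language to detect a halting computation inside a diluted window. The resolution is to hide the unbounded bookkeeping $r_i$ in the high-order part of the gap length $g_i$ and keep the Turing-machine symbol $\gamma_i$ in the residue $g_i\bmod c$, which an automaton can still read off; the remaining points (recursiveness of $Y$ via the ragged-end check, and closedness of $X$) are routine.
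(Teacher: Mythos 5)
Your construction is correct in outline, but it takes a genuinely different and much heavier route than the paper. The paper does not use computation histories of a universal machine at all: it enumerates all Turing machines $T_1,T_2,\ldots$ and takes the orbit closure of the countable family of points $x_i={}\INF 0.1^i2^{i+h(i)}3\INF$ (if $T_i$ halts in exactly $h(i)$ steps) and $x_i={}\INF 0.1^i2\INF$ (if it never halts). This subshift sits inside the countable SFT $\B^{-1}(0^*1^*2^*3^*)$, recursiveness reduces to the decidable question ``does $T_j$ halt within $k$ steps'', the only minimal subsystems are the four fixed points $\INF a \INF$ (a countable subshift has no infinite minimal subsystems), and universality is witnessed already by the \emph{undirected} halting problem between the cylinders $[01^i2]_0$ and $[3]_0$. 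What your dilution trick buys, by comparison, is a single minimal subsystem $\{0^\Z\}$ rather than four, and it works over a two-letter alphabet; the paper gets down to one minimal subsystem by a much lighter modification (interleaving $0$s, or replacing the $0$-blocks by words of a minimal subshift), and its verification of recursiveness and of the reduction is a few lines rather than a layered encoding.

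One point in your write-up needs repair before the argument is complete: the claimed recursiveness of the machine subshift $Y$ (hence of $X$) is sensitive to what $\mathrm{next}$ does on halting configurations. If halting configurations have no successor and no ``end of history'' tail is permitted, then a word such as $\#\,C\,\#$ lies in $\B(Y)$ if and only if the run from $C$ can be continued forever, so your stated membership test (local consistency plus ragged-end checks) is not equivalent to membership, and deciding $\B(Y)$ would embed an immortality question. The standard fix --- declare halting configurations to be fixed points of $\mathrm{next}$ (or allow a legal $\#\,0^\infty$ tail after a halt) --- restores both the recursiveness of $\B(Y)$ and the forward direction of your reduction, since halting histories must then actually occur in points of $Y$. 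With that convention made explicit (and the routine bookkeeping for configuration blocks of growing length), your diluted subshift $X$ is indeed recursive, has $\{0^\Z\}$ as its unique minimal subsystem, and has $\Sigma^0_1$-complete model-checking for regular languages, so it proves the proposition; note, however, that unlike the paper's example it is not obviously contained in a countable SFT, a feature the paper uses for its finer remarks on Cantor--Bendixson rank.
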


The proof of this is given in Section~\ref{sec:Universal}, Proposition~\ref{prop:OneMinimal}. Our example is a $\Z$-subshift, and we interpret subsystems in the sense of being closed under the $\Z$-action, while subsystems in the sense of \cite{DeKuBl06} need to be closed under the induced $\N$-action only. It is easy to see that this does not change the number of minimal subsystems, although it may change the number of subsystems in general (see  Proposition~\ref{prop:Correspondence} for details), so that our example also provides an $\N$-system with the desired properties. Alternatively, one can directly modify our example to be one-sided.

The subshift is very simple, and we offer multiple variations of it. At its simplest, the number of minimal subsystems is one, and this subsystem is just a single point.\footnote{The one-point subshift is the simplest minimal subshift, but any minimal subshift can be used here, though naturally at the expense of countability.} The subshift is countable, and in fact contained in a countable sofic shift. Alternatively, the subshift can be contained in a countable SFT, although with slightly more minimal subsystems. The Cantor-Bendixson rank of the enveloping countable sofic shift is $4$. CB-rank $4$, for a countable sofic shift, means that each point contains at most $3$ disturbances to periodicity. 
In addition to the subshift being very simple, also the universality is very strong: not only are intersections with regular languages $\Sigma^0_1$-complete, but even the \emph{undirected halting problem}, the question of whether two given words $u$ and $v$ occur in the same point, is $\Sigma^0_1$-complete.

While $X$ has only finitely many minimal subsystems, it has infinitely many subsystems altogether. In fact, the set of subshifts of $X$ has the cardinality of the continuum (which is the maximal possible). In the abstract and introduction of \cite{DeKuBl06}, the authors state the conjecture in a weaker form, asking if a universal system should have infinitely many subsystems, without any mention of minimality. While this was presumably just meant as a shortened form of the statement of Conjecture~\ref{con:IntroManySubsystems}, it is a natural next question whether at least this is true.

\begin{question}
\label{q:IntroWeaker}
Must a universal symbolic system have infinitely many subsystems?
\end{question}

We name this class for easier reference.

\begin{definition}
A subshift is \emph{quasiminimal} if it has finitely many proper subshifts.
\end{definition}

The question for us is then whether a recursive quasiminimal subshift can be universal. One might guess that if $X$ has only finitely many subshifts, then it must be a quite simple extension\footnote{Here, we use the term extension in the sense of containment (monomorphisms), and not in the sense of factoring (epimorphisms). This usage is somewhat nonstandard in the theory of dynamical systems, but it is fitting for quasiminimal systems, since they are inductively built, in finitely many steps, from smaller quasiminimal systems by adding new points.} of the one or more minimal subshifts it necessarily contains, or perhaps even essentially just a union. It turns out that, unlike in Proposition~\ref{prop:IntroOneMinimal}, whether this is true depends on whether the acting monoid is $\N$ or $\Z$, that is, on whether our subshifts are one- or two-sided.\footnote{Strictly speaking, we could also consider $S^\Z$ with an $\N$-action (obtaining a rather unnatural definition of a subshift), but the convention is that $S^M$ uses the natural shift action of $M$.}

In the case of $\N$-actions, we show that the extensions of minimal subshifts to quasiminimal ones are rather trivial, and the answer to Question~\ref{q:IntroWeaker} is ``yes''.

\begin{theorem}
\label{thm:NDecidable}
A recursive quasiminimal $\N$-subshift is decidable.
\end{theorem}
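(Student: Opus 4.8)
The plan is to show that the \emph{limit set} $\Omega(X):=\bigcap_{n\ge 0}\sigma^{n}(X)$ of a recursive quasiminimal $\N$-subshift $X$ is a finite union of minimal subshifts, and then to appeal to the result of \cite{DeKuBl06} that a recursively presented system whose limit set is a finite union of minimal systems is decidable (the minimal pieces here being among the finitely many subshifts of $X$; one checks via Theorem~\ref{thm:MinimalRecursive} that they are recursive, so the effectivity hypotheses are met). I would prove the statement about $\Omega(X)$ by induction on the number of subshifts of $X$, via a structure theorem. The base case is $X$ minimal, where $\Omega(X)=X$.

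For the inductive step, let $X'\subseteq X$ be the union of all proper subshifts of $X$; since $X$ is quasiminimal this is a finite union, hence a subshift. If $X'=X$, then $X=Y_1\cup\cdots\cup Y_m$ for finitely many maximal proper subshifts $Y_i$, each quasiminimal with strictly fewer subshifts, and from $\sigma^{n}(X)=\bigcup_i\sigma^{n}(Y_i)$ we get $\Omega(X)=\bigcup_i\Omega(Y_i)$, so we conclude by the inductive hypothesis. If $X'\subsetneq X$, then every point of $X\setminus X'$ is transitive, since its orbit closure is a subshift and cannot be a proper one. The key claim is that $X\setminus X'$ is then \emph{finite} (equivalently, every point of $X$ has a forward iterate in $X'$). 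Granting it, $X$ is infinite (a finite non-minimal subshift is a union of periodic orbits and falls under the previous case), so for $x\in X\setminus X'$ the iterates $\sigma^{n}(x)$ are distinct, and since $\Orb(x)$ cannot lie in the finite set $X\setminus X'$ we have $\sigma^{n}(x)\in X'$ for some $n$; then $X=\OC{x}=\{x,\sigma(x),\dots,\sigma^{n-1}(x)\}\cup\OC{\sigma^{n}(x)}$ yields $\sigma^{m}(X)\subseteq X'$ for all $m\ge n$, hence $\Omega(X)=\Omega(X')$, and $X'$ is quasiminimal with strictly fewer subshifts, so we conclude by induction.

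The combinatorial heart, and the step I expect to be the main obstacle, is the key claim. Suppose $x$ is transitive with $\Orb(x)\subseteq X\setminus X'$. Its $\omega$-limit set $\omega(x)$ is a subshift of $X$; if it met $X\setminus X'$ it would contain a transitive point and hence equal $X$. In the case $\omega(x)\subsetneq X$ we therefore have $\omega(x)\subseteq X'$, the iterates $\sigma^{j}(x)$ are distinct and none lies in $\omega(x)$, and $\OC{\sigma^{j}(x)}=\{\sigma^{i}(x):i\ge j\}\cup\omega(x)$ gives a strictly decreasing chain $\OC{x}\supsetneq\OC{\sigma(x)}\supsetneq\cdots$ of subshifts, contradicting quasiminimality; this subcase is thus clean. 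The remaining subcase, $\omega(x)=X$ — so that $x$ is recurrent and $\sigma$ is surjective on $X$ while $X$ is not minimal — is the delicate one. Here I would first reduce: either some maximal proper subshift of $X$ is cofinite (so $X\setminus X'$ is already finite and we are done), or one shows $\sigma$ is surjective on $X'$ as well; and then, to rule out the persistent bad scenario, I would argue that the forward orbit of $x$, being recurrent yet forced to approach the proper subshift $X'$ infinitely often without ever entering it, produces — via a return-word/excursion analysis that tracks how the ``seams'' of $x$ align against the admissible words of the minimal subsystems of $X'$ — infinitely many pairwise distinct subsystems of $X$, again contradicting quasiminimality. Equivalently, the content to be extracted is that a quasiminimal $\N$-subshift on which $\sigma$ is surjective is a finite union of minimal subshifts. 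Once this is established, the induction above closes and \cite{DeKuBl06} yields decidability; I expect controlling the alignment of the excursions in the recurrent subcase to be the only place where genuine work is required.
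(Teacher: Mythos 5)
Your high-level route is the same as the paper's: reduce to the statement that the limit set of a quasiminimal $\N$-subshift is a finite union of minimal subshifts, and then invoke Lemma~8 of \cite{DeKuBl06} (this is exactly Corollary~\ref{cor:NDecidable} in the paper, resting on the structure theorem, Theorem~\ref{thm:NCharacterization}). Your reduction steps (the case $X'=X$, and the case $\omega(x)\subsetneq X$ with the strictly decreasing chain $\OC{x}\supsetneq\OC{\sigma(x)}\supsetneq\cdots$) are fine. But the argument has a genuine gap precisely where you flag it: the ``delicate'' recurrent subcase. The statement you say needs to be extracted --- that a quasiminimal $\N$-subshift on which $\sigma$ is surjective is a finite union of minimal subshifts --- is the entire nontrivial content of the theorem, and your proposed ``return-word/excursion analysis'' of how the seams of $x$ align against the minimal subsystems of $X'$ is a heuristic, not a proof; no mechanism is given that actually produces infinitely many distinct subsystems. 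Note also that this statement is \emph{false} for $\Z$-actions (the sunny-side-up shift and the substitutive examples of Section~\ref{sec:Substitutions} are surjective, quasiminimal, and not unions of minimal subshifts), so any correct argument must exploit one-sidedness in an essential way --- specifically, that subsystems of an $\N$-subshift need not be surjective, so orbit closures of successive preimages are new subsystems. An excursion analysis of the forward orbit of a single recurrent point does not obviously see this, which is why I doubt your sketch can be completed as stated.

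The paper's proof of Theorem~\ref{thm:NCharacterization} supplies exactly the missing mechanism. First, quasiminimality forces $\sigma^{m+1}(X)=\sigma^m(X)=:Y$ for some $m$, and a short argument shows $X\setminus Y$ is finite (each point of $X\setminus Y$ lies in a subshift meeting $X\setminus Y$ in finitely many points, so an infinite $X\setminus Y$ would give infinitely many subshifts); this replaces your ``key claim'' without any case split on $\omega(x)$. Second --- and this is the step your sketch does not deliver --- if some $y'\in Y$ were not uniformly recurrent, one extracts a point $y\in Y$ containing some word $w$ exactly once, at the origin. Since $\sigma(Y)=Y$, the point $y$ has an infinite chain of preimages $\cdots\mapsto y_2\mapsto y_1=y$ inside $Y$, and the orbit closures $\OC{y_i}$ contain pairwise different (finite) numbers of points in which $w$ occurs, hence are pairwise distinct subsystems of $X$, contradicting quasiminimality. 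Thus every point of $Y$ is uniformly recurrent and $Y$ is a finite union of minimal subshifts, after which Lemma~8 of \cite{DeKuBl06} gives decidability as you intended. To repair your write-up, replace the excursion sketch in the recurrent subcase by this preimage-chain argument (applied to $X=\omega(x)$, on which $\sigma$ is surjective).
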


This is shown in Corollary~\ref{cor:NDecidable}. The proof is quite short: we characterize this class, and show that it fits one of the decidability result proved in \cite{DeKuBl06}. Our main interest is in the case of $\Z$-actions, where the extensions can be quite complicated. For example, while primitive substitutions give rise to minimal subshifts, all aperiodic substitutions satisfying a technical property give rise to quasiminimal ones, see Proposition~\ref{prop:SubstitutionsQuasiminimal}. These examples already show that quasiminimality is indeed quite different from minimality. In fact, in this setting, the answer to Question~\ref{q:IntroWeaker} is ``no''.

\begin{theorem}
\label{thm:IntroQuasiminimal}
There exists a recursive universal quasiminimal $\Z$-subshift.
\end{theorem}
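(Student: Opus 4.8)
The plan is to construct a $\Z$-subshift $X$ over a finite alphabet whose nontrivial "activity" is confined to a single, unboundedly long finite region in each point, so that reading that region can simulate a Turing machine computation, while the global structure remains so rigid that only finitely many subshifts can sit inside $X$. Concretely, I would work over an alphabet like $\{0, 1, \#, a, b, \dots\}$ and let the "generic" point be a single biinfinite region $\dots 0 0 0\, w\, 0 0 0 \dots$, where $w$ is a finite word encoding a valid (halting-or-not) computation of a fixed universal Turing machine, bracketed by markers. The one-point subshift $\{{}^\infty 0^\infty\}$ is then the unique minimal subsystem, obtained as the limit of shifting the window $w$ off to infinity. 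Membership of a word $u$ in the language of $X$ is then essentially equivalent to $u$ appearing as a factor of some valid computation history, which lets us encode the halting problem: we arrange that a specific marker word $v_{\text{halt}}$ occurs in a point of $X$ if and only if the simulated machine halts, giving $\Sigma^0_1$-completeness of language intersection (and, with care in the encoding, of the undirected halting problem $u, v$ occurring in a common point). Recursiveness of $X$ is ensured by making the set of forbidden patterns — "this is not a locally-consistent fragment of a computation tableau surrounded by $0$s" — recursively enumerable, which is routine for a tableau/partial-configuration encoding à la Robinson/Myhill.

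The key steps, in order, are: (1) fix a universal Turing machine $M$ and a clean way to lay out a (possibly infinite, possibly finite-then-halting) space-time diagram of $M$ as a biinfinite word, padded with a blank symbol $0$ outside the used cells and outside the used time steps, with delimiters marking the boundary of the active rectangle; (2) define $X$ by local rules forcing: correct Turing transitions inside the active region, the region being a single finite block surrounded by $^\infty 0 \dots 0^\infty$, and halting encoded by the appearance of a distinguished halting marker $v_{\text{halt}}$; (3) check $X$ is a recursive (indeed the r.e.\ forbidden list is decidable-in-the-limit enough for the definitions in the excerpt); (4) verify universality: reduce the halting problem of $M$ on input $x$ to "$v_{\text{halt}}^{(x)}$ occurs in $X$", using that a partial computation segment is always extendable in $X$ precisely until a halt; and strengthen to the undirected halting problem by making the input $x$ and the halting marker each occur in points of $X$ in a way that forces them to coexist iff $M(x)$ halts; (5) the crux — prove quasiminimality: show every subshift $Y \subseteq X$ is one of finitely many possibilities. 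The intended dichotomy is that any $Y$ either contains a point with a nonempty active region, in which case shift-invariance and the fact that there is essentially only one "shape" of active region forces $Y$ to contain all such points and hence $Y = X$, or else $Y$ consists only of limits of windows pushed to infinity, i.e.\ $Y \subseteq \{{}^\infty 0^\infty\}$. Then the only subshifts are $\emptyset$, $\{{}^\infty 0^\infty\}$, and $X$ itself — so at most three, certainly finitely many. (If the encoding needs auxiliary boundary/transient letters, one pays for it with a few more minimal subsystems and a slightly larger but still finite list of subshifts, exactly as the introduction hints with the "countable SFT" variant.)

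The main obstacle I expect is step (5): making the subshift rich enough to embed a universal computation while keeping the set of subsystems finite. These pull in opposite directions — a naive computation-tableau SFT has continuum-many subsystems (e.g.\ one can freeze a half-plane of blanks, or take various sub-tableaux, or shift-orbit closures of non-halting computations can be wildly non-rigid). The fix is to insist there is a unique bi-directional "seed" from which the whole active region is forced: a single finite initialized-and-framed block, with the rules strong enough that any biinfinite point either contains exactly one such block (and then the block determines the whole point up to translation) or contains none (and then the point is $^\infty 0^\infty$). In particular one must rule out points with two active blocks, with a half-infinite active region, or with an active region that never got initialized; this is where the delimiter/frame design has to be done carefully, and it is the technical heart of the construction. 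Once that rigidity lemma is in place, counting subshifts of $X$ is immediate, $X$ is countable (it is a countable union of orbits plus the point $^\infty 0^\infty$), and all the claimed properties — recursiveness, universality, the strong undirected-halting universality, the bound on Cantor–Bendixson rank of an enveloping sofic shift — follow by inspection of the encoding.
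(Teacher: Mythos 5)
Your step (5) is not merely the delicate part of the argument; as designed it cannot work, and the obstruction is topological rather than a matter of delimiter or frame design. If $X$ contains points ${}^\infty 0\, w_n\, 0^\infty$ where the $w_n$ are dense computation tableaux with $|w_n| \to \infty$, then by compactness $X$ necessarily also contains limit points with half-infinite or bi-infinite active regions: every finite subword of such a limit point already occurs in some $w_n$, so no choice of forbidden patterns can exclude these limits without also excluding the finite-block points. Such a limit point contains no long run of $0$s, hence is not weakly transitive, so its orbit closure is a proper subshift different from $\{{}^\infty 0^\infty\}$; and since a universal machine is run on infinitely many inputs (and already a single non-halting space--time diagram has infinitely many limit points with pairwise distinct languages), one gets infinitely many distinct such orbit closures. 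Quasiminimality therefore fails. In fact you have essentially rebuilt the paper's Proposition~\ref{prop:OneMinimal}: a single unboundedly long active region surrounded by zeros yields finitely many \emph{minimal} subsystems but continuum many subsystems in total -- the counterexample to Conjecture~\ref{con:IntroManySubsystems}, not a quasiminimal subshift. Your dichotomy ``either $Y=X$ or $Y\subseteq\{{}^\infty 0^\infty\}$'' is exactly what these limit points destroy. A second, independent refutation of the target count: by Proposition~\ref{prop:3IsDecidable}, a countable subshift with $\QW(X)\leq 3$ is the orbit closure of a point of the form ${}^\infty u v u^\infty$ and hence decidable, so no countable universal subshift can have only $\emptyset$, one minimal system, and itself as subshifts.

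The paper's constructions evade the obstruction by making the computation \emph{sparse} rather than dense, so that the limit points are controlled. In the countable case (Lemma~\ref{lem:CountableConstruction} and Theorems~\ref{thm:CountableCodedLanguages}, \ref{thm:CountablePiecewiseTestableLanguages}) the nonzero symbols occur with gaps tending to infinity, so every limit point contains at most one nonzero symbol and lies in $\B^{-1}(0^*S0^*)$; the halting data is encoded in arithmetic properties of the gaps (divisibility classes, or the number of intermediate markers), which is why only the modular halting and counting problems become undecidable there. In the uncountable case (Lemma~\ref{lem:Construction} and Theorem~\ref{thm:TransitiveLocallyTestableLanguages}, which is the proof of Theorem~\ref{thm:IntroQuasiminimal}) a single marker symbol is placed along a uniformly recurrent sequence with unique singularities (the ruler sequence) and the space between markers is filled with words of a fixed recursive infinite minimal subshift $Y$, chosen from prefix and suffix codes according to whether machines halt; then every limit point either contains markers in a uniformly recurrent pattern and generates all of $X$, or contains no marker and lies in $Y$, giving exactly one nontrivial proper subshift. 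To salvage your approach you would have to replace the tableau encoding by a sparse encoding of this kind; the dense Robinson/Myhill-style tableau is incompatible with quasiminimality.
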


While this answers Question~\ref{q:IntroWeaker} completely in the case of $\Z$-actions, it turns out to be a bit more intricate than Conjecture~\ref{con:IntroManySubsystems} -- for example, we will see that the halting problem (whether a clopen set is reachable from another one) is decidable for a recursive quasiminimal subshift. In fact, given any finite tuple of words, an algorithm can check whether there exists a point in the subshift containing those words, in order (see Theorem~\ref{thm:TupleTheorem}). In particular (though not quite equivalently) we can solve the model-checking problem for regular languages of the form $w_1 S^* w_2 S^* \cdots S^* w_k$ where $S$ is the alphabet of the subshift and the $w_i$ are arbitrary words. This class of regular languages is related to the piecewise testable regular languages.

On the other hand, in Theorem~\ref{thm:TransitiveLocallyTestableLanguages} we show that the halting problem along a clopen set is $\Sigma^0_1$-complete, that is, the halting problem where we restrict the path between the given clopen sets to stay in a third clopen set. This corresponds to the fact that the model-checking problem is undecidable for locally testable languages, a well-known subclass of regular languages.

We also study countable quasiminimal subshifts. For these, we believe the model-checking problem for locally testable languages is in fact decidable (Conjecture~\ref{con:LocallyTestableDecidable}), which would show a great difference between the countable and uncountable cases. We prove the undecidability of the model-checking problem for piecewise testable languages and what we call renewal languages (languages obtained as concatenations of a set of words, starting and ending in a given word), in Theorem~\ref{thm:CountablePiecewiseTestableLanguages} and Theorem~\ref{thm:CountableCodedLanguages}. The model-checking problem for piecewise testable languages is dealt with by showing that the ``halting problem along a clopen set except on exactly $k$ steps'' (which we call the counting problem) is $\Sigma^0_1$-complete, and the model-checking problem for renewal languages by showing that the ``halting problem in time $m \bmod k$'' is $\Sigma^0_1$-complete.

An obvious question is how many subshifts exactly do we need to have to achieve universality, since the result of \cite{DeKuBl06} shows we need at least one. In the uncountable case, our example has only one minimal subshift (which can be chosen to be any infinite minimal subshift). This is clearly optimal. For countable quasiminimal subshifts, the number of nontrivial (nonempty and non-full) subshifts is two in our example for renewal systems, and we show this to be optimal in Proposition~\ref{prop:3IsDecidable}, since countable systems with only one subsystem are in fact decidable in general. For the particular case of piecewise testable languages, the optimal number of subshifts is shown to be~$4$.

In addition to proving a small gap between decidability and undecidability, these constructions are interesting as first explicit examples of quasiminimal $\Z$-subshifts. In particular, they show that a quasiminimal $\Z$-subshift can be quite far from a union of minimal $\Z$-subshifts, and even countable quasiminimal $\Z$-subshifts can be rather complicated objects. In fact, these constructions are in some sense representative of the general case, in that a full characterization of quasiminimal subshifts can be obtained by generalizing and iterating the constructions of this article. We defer the full characterization to a later work \cite{Quasiminimals}, and prove here only enough structural results to solve our decidability problems of interest in Section~\ref{sec:Decidability}.

We also show in Theorem~\ref{thm:ContextFree} that the model-checking problem of context-free languages can be undecidable even for minimal subshifts.

\section{Definitions and basic observations}


\subsection{More or less standard definitions}

\begin{remark}
In this article, $\N = \{0,1,2,3, \ldots\}$, and words are $0$-indexed. We give the definitions of subshifts and related concepts for $\Z$-actions in this section. It is easy to modify them in the case of $\N$-actions for the few places where we need them, the crucial difference being that we do not require surjectivity for $\N$-actions. A subset of $S^\Z$, when considered a dynamical system, will always be considered a system with a $\Z$-action given by the shift, and a subset of $S^\N$ is an $\N$-system.
\end{remark}

\begin{definition}
A \emph{subshift} is a compact subset $X$ of $\N^\Z$, where $\N$ has the discrete topology and $\N^\Z$ the product topology, and which is shift-invariant, in the sense that $\sigma(X) = X$, where $\sigma : \N^\Z \to \N^\Z$ is the shift map $\sigma(x)_i = x_{i+1}$.
\end{definition}

For standard references see \cite{LiMa95,Ku03,Ki87}.

Note that this indeed corresponds (up to symbol renaming) to the usual definition where a finite set is used in place of $\N$, and closed shift-invariant subsets are called subshifts. Namely, for any finite $S \subset \N$, $X \subset S^\Z$ is a subshift if and only if $X$ is closed and shift-invariant, since $S^\Z$ is compact and Hausdorff, so that its closed subsets are precisely its compact subsets. On the other hand, no shift-invariant set $X \subset \N^\Z$ containing infinitely many letters is compact, since the open cover $\{[a]_0 \;|\; a \in \N\}$ has no finite subcover. Accordingly, when we write that $X \subset S^\Z$ is a subshift, we imply that $S$ is chosen to be some finite alphabet. The induced topology of $S^\Z$ for finite $S$ is generated by the \emph{cylinders} $[w]_i = \{x \in S^\Z \;|\; x_{[i,i+|w|-1]} = w\}$ for $w \in S^*$ and $i \in \Z$, which are clopen in $S^\Z$. The clopen sets are precisely the finite unions of cylinders, and can all be represented as $[C]_i = [w_1]_i \cup [w_2]_i \cdots \cup [w_k]_i$ for some $i \in \Z$, $n \in \N$ and $C = \{w_1, \ldots, w_k\} \subset S^n$. We write $[C] = [C]_0$.

For words $u, v \in \N^*$ (that is, two finite words over $\N$), we write $uv$ for their concatenation. For words $u, v, v', w \in \N^*$, we write $\INF u v. v' w\INF$ for the point $x \in \N^\Z$ with $x_{[0,|v'|-1]} = v'$, $x_{[-|v|,-1]} = v$, and for all $i \in \N$,
\[ x_{[-|v|-(i+1)|u|,-|v|-1-i|u|]} = u \]
and
\[ x_{[|v|+i|w|, |v|+(i+1)|w|-1]} = w. \]
Note that coordinate $0$ is to the right of the decimal point. Often, however, this position is not relevant, and the decimal point is omitted. We sometimes write $\cdots u . v \cdots$ when  the continuations from $u$ to the left and from $v$ to the right are easy to guess. For example, $x = \cdots 0000.0123 \cdots$ is the point $x \in \N^\Z$ where for all $i \in \N$ we have $x_{-i} = 0$ and $x_i = i$. Similar notations are used for points $x \in \N^\N$. 

A \emph{central pattern} of $x \in \N^\Z$ is a word $x_{[-n, n]} \in \N^*$ for some $n \in \N$. For $x \in \N^\Z$, we write $u \sqsubset x$ if $x_{[i,i+|u|-1]} = u$ for some $i \in \Z$. We also write $u \sqsubset v$ for two words $u, v \in \N^*$ if $u$ is a subword of $v$. This extends to sets $A \subset \N^\Z$ by $u \sqsubset A$ if $u \sqsubset x$ for some $x \in A$.

A subshift can also be defined by a set $F \subset S^*$ of \emph{forbidden words}, in the sense that if $X \subset S^\Z$ is a subshift, then there exists $F \subset S^*$ such that
\[ X = \{ x \in S^\Z \;|\; \forall u \in F: u \not\sqsubset x \}. \]
If $F$ can be taken to be finite, then $X$ is called an \emph{SFT}, and if it can be taken to be a regular language (a language accepted by a finite-state automaton \cite{HoMoUl06}), then $X$ is called a \emph{sofic shift}. Every SFT is sofic, but the converse is not true. The \emph{SFT approximation of order $k$} of a subshift $X \subset S^\Z$ is the SFT $Y_k$ whose forbidden patterns are precisely the words $S^k \setminus \B_k(X)$. Thus, for all $k$ and $m \geq k$, we have $\B_k(Y_m) = \B_k(X)$.

If $X$ and $Y$ are subshifts, a continuous function $f : X \to Y$ between them is called a \emph{morphism} if $f \circ \sigma = \sigma \circ f$. A surjective morphism is called \emph{factor map}, and a bijective one is called a \emph{conjucagy}, or a \emph{recoding}.

For a point $x \in \N^\Z$, we write $\B_n(x)$ for length-$n$ subwords of $x$, that is, $\B_n(x) = \{u \in \N^n \;|\; u \sqsubset x \}$, and $\B(x) = \bigcup_n \B_n(x)$. This is called the \emph{language} of $x$. For any set $A \subset \N^\Z$, we write $\B_n(A) = \bigcup_{x \in A} \B(x)$, and define $\B(A)$ as before. A subshift $X \subset S^\Z$ is uniquely determined by its language $\B(x)$, which is always \emph{factor-closed} ($vwv' \in \B(X) \implies w \in \B(X)$) and \emph{extendable} ($u \in \B(X) \implies \exists a, b \in S: aub \in \B(X)$).

Accordingly, for a factor-closed and extendable language $L$, we write $\B^{-1}(L)$ for the unique subshift $X$ with $\B(X) = L$. We extend this notation to all extendable languages $L$ by $\B^{-1}(L) = \B^{-1}(\mathrm{Fact}(L))$, where $\mathrm{Fact}(L) = \{w \;|\; \exists u \in L : w \sqsubset u\}$ is the \emph{factor closure} of $L$. For example, we can write the (sofic) \emph{even shift} \cite{LiMa95} with forbidden patterns $1(00)^*01$ as $\B^{-1}((1(00)^*)^*)$, and the (SFT) \emph{golden mean shift} with the single forbidden pattern $11$ as $\B^{-1}((100^*)^*)$. The \emph{orbit} of a point $x \in \N^\Z$ is $\mathcal{O}(x) = \{\sigma^i(x) \;|\; i \in \Z\}$. The \emph{orbit closure} $\OC{x}$ of $x \in S^\Z$ is the (topological) closure of the orbit of $x$ in $S^\Z$.\footnote{See Section~\ref{sec:Ruler} for the case where $x$ has infinite language.} It is easy to see that $\OC{x}$ is the smallest subshift containing $x \in S^\Z$.

A \emph{substitution} is a function $\tau : A \to B^*$, where $A, B \subset \N$. In our applications, usually $A = \N$, and $B = S$ where $S$ is the alphabet of a subshift being discussed. For $\phi \in \N^\Z$ and $\tau : \N \to S^*$, we write
\[ \tau(\phi) = \cdots \tau(\phi_{-2}) \tau(\phi_{-1}) . \tau(\phi_0) \tau(\phi_1) \tau(\phi_2) \cdots. \]



A \emph{weakly transitive point (for $X$)} is a point $x \in X$ such that $w \sqsubset x$ holds for all $w \sqsubset X$. If $X$ contains a weakly transitive point, we say $X$ is \emph{weakly transitive}. A \emph{doubly transitive point (for $X$)} is a point $x \in X$ such that every word $w \sqsubset X$ occurs infinitely many times in both tails of $x$. If $X$ contains a doubly transitive point, then we say $X$ is \emph{transitive}. A weakly transitive subshift need not be transitive. For example, a transitive subshift is either finite or uncountable (since a countable subshift contains an \emph{isolated point}, that is, a singleton open set \cite{BaDuJe08}), but the infinite countable subshift $\B^{-1}(0^*10^*) = \overline{\mathcal{O}(\INF 0 1 0 \INF)}$, called the \emph{sunny-side-up subshift} is weakly transitive since it is the orbit closure of the point $\INF 0 1 0 \INF$.

A subshift $X \subset S^\Z$ is \emph{minimal} if we have $Y \in \{\emptyset, X\}$ for all subshifts $Y \subset X$. This is equivalent to the fact that $X$ is \emph{uniformly recurrent}, that is,
\[ w \sqsubset X \implies \exists n: \forall u \in \B_n(X): w \sqsubset u. \]
We write $a^\Z$ for the point $x$ with $\forall i \in \Z: x_i = a$. We have $\OC{a^\Z} = \{a^\Z\} = \B^{-1}(a^*)$. This is an example of a (finite) minimal subshift. Typically, minimal subshifts are uncountable. Such examples can be found in \cite{Ku03}.


There is a canonical way to remove the isolated points of a space (in our case, a subshift): The \emph{Cantor-Bendixson derivative} of a subshift $X$ is the subshift $X^{(1)}$ obtained by forbidding all words $w$ from $X$ such that $[w]_0$ is a singleton. That is, $X' = X^{(1)}$ is $X$ without its isolated points. This process can be repeated up to any ordinal in an obvious way by transfinite induction, and the \emph{Cantor-Bendixson rank} of a subshift $X$ is the least ordinal $\lambda$ such that $X^{(\lambda)} = X^{(\lambda + 1)}$, where $X^{(\beta)}$ denotes the $\beta$th Cantor-Bendixson derivative of $X$. The Cantor-Bendixson rank of $X$ is denoted by $\CB(X)$, and we call the set $X^{(\CB(X))}$ the \emph{Cantor-Bendixson center} of $X$. The Cantor-Bendixson center is the unique maximal perfect subspace of $X$, which is automatically a subshift. A subshift $X$ is countable if and only if its Cantor-Bendixson center is empty. We refer to \cite{BaDuJe08} for details.

We need the first few levels of the \emph{arithmetical hierarchy} \cite{Sa90}. Namely, $\Sigma^0_1$ is the set of recursively enumerable languages, that is, languages such that there is an algorithm that halts on words in the language, but not on the ones outside it. The set of complements of $\Sigma^0_1$ languages is $\Pi^0_1$. A language in $\Delta^0_1 = \Sigma^0_1 \cap \Pi^0_1$ is called \emph{recursive}. Properties that are recursive when encoded into languages (in some natural way, usually safe to leave implicit) are called \emph{decidable}.

We say that a subshift is $\Pi^0_1$ or $\Sigma^0_1$ if its language is. There are other possible definitions for $\Pi^0_1$ subshifts, and we give a few below.

\begin{lemma}
For a subshift $X \subset S^\Z$, the following are equivalent:
\begin{itemize}
\item $X$ is $\Pi^0_1$.
\item The set $S^* \setminus \B(X)$ is $\Sigma^0_1$.
\item There exists a Turing machine $M$ which enumerates an infinite list of words $w_1, w_2, \ldots$ such that if $x \in S^\Z$, then $x \in X$ if and only if $w_i \not\sqsubset x$ for any $i \in \N$.
\item There exists a Turing machine $M$ such that the machine $M^x$ ($M$ with oracle $x \in S^\Z$) halts if and only if $x \notin X$.
\end{itemize}
\end{lemma}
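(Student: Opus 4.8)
The plan is to establish the cycle $(1)\Leftrightarrow(2)\Rightarrow(3)\Rightarrow(4)\Rightarrow(2)$. The equivalence $(1)\Leftrightarrow(2)$ is just unwinding definitions: ``$X$ is $\Pi^0_1$'' means $\B(X)$ is a $\Pi^0_1$ language over $S$, and by definition a $\Pi^0_1$ language is the complement of a $\Sigma^0_1$ one. For $(2)\Rightarrow(3)$ I would take the \emph{maximal} forbidden set $N=S^*\setminus\B(X)$: a routine use of closedness of $X$ shows $X=\{x\in S^\Z : w\not\sqsubset x\text{ for all }w\in N\}$ (if every central pattern of $x$ occurs in $X$, approximate $x$ by points of $X$ agreeing with it on ever larger windows). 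Since $N$ is $\Sigma^0_1$ by hypothesis, a machine can enumerate it as a list $w_1,w_2,\dots$; if $N$ is finite we pad by repetition, and if $N=\emptyset$ (so $X=S^\Z$) we list a fixed symbol outside $S$, which occurs in no point of $S^\Z$ — in all cases the list is infinite and has the stated property. For $(3)\Rightarrow(4)$, given the enumerator for $w_1,w_2,\dots$, let $M'$ with oracle $x$ dovetail over pairs $(i,j)$ with $i\in\N$ and $j\in\Z$, generate $w_i$, read $x_j,\dots,x_{j+|w_i|-1}$, and halt if this word equals $w_i$; then $M'^x$ halts iff $w_i\sqsubset x$ for some $i$, i.e. iff $x\notin X$.

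The one substantive step is $(4)\Rightarrow(2)$, a compactness argument; it is genuinely needed, since one cannot recover $\B(X)$ from the list in $(3)$ by merely ``avoiding the $w_i$'' — for SFTs a word can be locally legal yet globally non-extendable. Suppose $M^x$ halts iff $x\notin X$. Given $w\in S^*$, note $w\notin\B(X)$ iff $[w]_0\cap X=\emptyset$ (by shift-invariance of $X$), iff $M^x$ halts for every $x\in[w]_0$. For $k\ge|w|$, $t\in\N$, and a pattern $p$ on $\{-k,\dots,k-1\}$ extending $w$ at coordinates $0,\dots,|w|-1$, say $p$ is \emph{$(k,t)$-good} if the simulation of $M$ for $t$ steps, answering oracle queries from $p$, reaches a halting state without ever querying a coordinate outside $\{-k,\dots,k-1\}$; this is a decidable property of $(k,t,p)$. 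The key claim is that $M^x$ halts for all $x\in[w]_0$ iff there exist $k,t$ such that every such $p$ is $(k,t)$-good, the direction $\Leftarrow$ being immediate. For $\Rightarrow$: each $x\in[w]_0$ has a halting run reading finitely many coordinates, so the cylinder $[x_{[-k_x,\,k_x-1]}]_{-k_x}\subseteq[w]_0$ consists of oracles on which $M$ halts within $t_x$ steps; since $[w]_0$ is compact, finitely many such cylinders cover it, and $k:=\max k_x$, $t:=\max t_x$ over this finite set work, because any admissible $p$ on $\{-k,\dots,k-1\}$ extends to a point of $[w]_0$ lying in one of those cylinders. Hence ``$w\notin\B(X)$'' is $\Sigma^0_1$, uniformly in $w$ (search over $(k,t)$, then check the finitely many $p$), so $S^*\setminus\B(X)\in\Sigma^0_1$.

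The main point to get right — and the only real obstacle — is the step counter $t$: without it, the predicate ``$p$ forces $M$ to halt within the window $\{-k,\dots,k-1\}$'' need not be decidable, since $M$ could run forever while querying only coordinates inside the window. With the bound $t$ in place the inner test is a bounded simulation, and the compactness argument supplies the uniform $(k,t)$ exactly when $[w]_0\cap X=\emptyset$. Everything else is bookkeeping.
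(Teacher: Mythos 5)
Your proof is correct; the paper states this lemma without proof, treating it as standard, and your argument (the definitional equivalences, the maximal forbidden set $S^*\setminus\B(X)$ for the enumeration, and the compactness/use-bound argument with the time bound $t$ for the oracle-machine direction) is exactly the standard one it implicitly relies on. The only remark worth making is that the step counter $t$, while a clean way to make the inner test decidable, is not strictly indispensable: ``the run on oracle answers from $p$ halts before querying outside the window'' is itself semidecidable, and a finite conjunction of semidecidable conditions followed by an existential quantifier over $k$ is still $\Sigma^0_1$.
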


We call a point $x \in S^\Z$ \emph{computable} if there is an algorithm that, given $i \in \Z$, computes $x_i$. It is well-known that not all $\Pi^0_1$ subshifts contain computable points, see for example \cite{CeDaToWy12}.


\subsection{The ruler sequence}
\label{sec:Ruler}

The set $\M = \N \cup \{\infty\}$ is the \emph{one-point compactification} of $\N$, where $U \subset \M$ is open if either $U \subset \N$, or $\M \setminus U$ is finite.

We give the space $\M^\Z$ the product topology, so that it becomes compact as well. A sequence $\phi \in \N^\Z$ is called \emph{recurrent} if every word that appears in $\phi$ appears infinitely many times in both directions. As usual, if every word appears with bounded gaps, we say $\phi$ is \emph{uniformly recurrent}. We say $\phi$ is \emph{Toeplitz} if for all $i \in \Z$ there exists a \emph{period} $p$ such that $\phi_i = \phi_{i+kp}$ for all $k$. It is easy to see that a Toeplitz sequence is uniformly recurrent. We say $\phi$ has \emph{unique singularities} if for each $m \in \N$ there exists $k \in \N$ such that
\[ \phi_i \geq k \implies \forall j \in [i-m,i+m] \setminus \{i\}: \phi_j \leq k. \]
We give $\M^\Z$ the shift action, and for each $\phi \in \M^\Z$ obtain a compact space $\overline{\mathcal{O}(\phi)}$ analogously to the case of finite alphabets.\footnote{We note that this is not the usual way to compactify subshifts with infinite alphabets.}

It is easy to verify that $\phi$ is recurrent if and only if $\phi$ is doubly transitive in $\overline{\mathcal{O}(\phi)}$, and, more importantly, by Theorem~7 in \cite{Au88} it is uniformly recurrent if and only if $\overline{\mathcal{O}(\phi)}$ is minimal. The unique singularities property for $\phi$ means precisely that each point of $\overline{\mathcal{O}(\phi)}$ contains at most one occurrence of $\infty$, as one can verify by a compactness argument. Of course, $\phi$ contains only finitely many distinct symbols if and only if $\infty \not\sqsubset \overline{\mathcal{O}(\phi)}$ if and only if $\overline{\mathcal{O}(\phi)}$ is a subshift. In particular, if there are finitely many symbols, having unique singularities is trivial, and recurrence and uniform recurrence correspond to the usual notions. The same observations hold for the space $\M^\N$.

We note that neither of uniform recurrence and unique singularities implies the other, as one can easily verify by examples.

We will essentially just need one recurrent sequence with unique singularities in our constructions. The proofs are particularly easy for the \emph{ruler sequence} $\phi \in \N^\N$,
\[ 0\;1\;0\;2\;0\;1\;0\;3\;0\;1\;0\;2\;0\;1\;0\;4\;0\;1\;0\;2\;0\;1\;0\ldots,\]
that is, $\phi = n_0 n_1 n_2 \ldots$ where $n_i$ is the largest integer $k$ with $2^k | i+1$. This is sequence A007814 in the OEIS database \cite{OEISruler}.

Note that for all $j$ such that $j \sqsubset \phi$ we have that $\{i \in \N \;|\; \phi_i = j\}$ is a semilinear set, that is, a finite union of translates of submonoids of $\N$. More precisely, for the ruler sequence $\phi$, we have
\[ \{i \in \N \;|\; \phi_i = j\} = \{n2^{j+1} + 2^j - 1 \;|\; n \in \N\}. \]
It follows that the ruler sequence is Toeplitz (thus uniformly recurrent) and has unique singularities.

We note some basic combinatorial properties of the subwords of the ruler sequence $\phi$, whose proofs we omit. If $\phi_i = \ell = \phi_j$ where $i < j$, then there exists $k \in [i, j]$ such that $\phi_k = \ell+1$. It follows that if $w \sqsubset X = \overline{\mathcal{O}(\phi)}$, then there exists a unique maximal symbol $w_i = k$ in $w$. For each $k \in \N$, there is a unique maximally long word $u \in \B(X)$ with $u_{2^k-1} = k$ and $\forall i: u_i \leq k$, and this $u$ satisfies $|u| = 2^{k+1}-1$.

Given a word $w \in \B(X)$ with $\infty \not\sqsubset w$, all possible extensions to longer words of $X$ are obtained by the following process: First, take the maximally long word $u \in \B(X)$ with $\max_i u_i = k = \max_i w_i$. We have $u = vwv'$ for a unique choice of $v, v'$, and $v$ ($v'$, resp.) is the unique extension of $w$ in $\B(X)$ of length $|v|$ to the left (of length $|v'|$ to the right, resp.). If $w = u$, then the extensions of $w$ to words in $\B(X)$ of the form $awb$ with $a, b \in \N$ are precisely those of the forms $(k+1) w \ell$ and $\ell w (k+1)$, where $\ell > k+1$.

For example, consider the word $w = 3$. To obtain its extensions, we first deterministically extend it to $u = 010201030102010$. Here, we have $v = v' = 0102010$. We now choose a side on which to write $4$. Choosing the left side, we obtain $4010201030102010$. On the other side, we can choose any number greater than or equal to $5$. Choosing $5$ and performing the deterministic extensions, we obtain the word
\[ 010201030102010\;4\;0102010\; 3\;0102010\;5\;0102010301020104010201030102010 \]
(where some spaces have been added for clarity). This is the unique maximally long word with maximal symbol 5.

Although $\phi \in \M^\N$, we can take its limit points also in $\M^\Z$ in the obvious way, and in the orbit closure of $\phi$ in $\M^\Z$, there is a computable point $\psi$ where $\infty$ does not occur. That is, $\psi \in \N^\Z$. One such example is
\[ \psi = \ldots01020103010201040102.010301020105010201030102010\ldots, \]
which is constructed by starting with $0$ and repeating the following steps, keeping the original $0$ at coordinate $0$:
\begin{itemize}
\item Extend the word on the right by the smallest symbol not yet found in it.
\item Extend the word deterministically, as much as possible.
\item Extend the word on the left by the smallest symbol not yet found in it.
\item Extend the word deterministically, as much as possible.
\end{itemize}

\subsection{Model-checking and halting problems}

The following definitions are those from \cite{DeKuBl06}, adapted to the case of subshifts.\footnote{In \cite{DeKuBl06}, these definitions are given not just for subshifts, but for the more general class of "effective symbolic systems", which in the case of subshifts correspond to ones with a recursive language. Although we do not require recursivity a priori, our examples have recursive languages.}

\begin{definition}
Let $X \subset S^\Z$ be a subshift. The \emph{model-checking problem of $X \subset S^\Z$ for Muller automata} is to decide, given a language $Y \subset S^\Z$ of infinite words described by a Muller automaton, whether $X \cap Y \neq \emptyset$. The \emph{model-checking problem of $X$ for finite-state automata} is to decide, given a regular language $L \subset S^*$ described by a finite-state automaton, whether $\B(X) \cap L \neq \emptyset$.
\end{definition}

These are chosen as the natural problems for defining decidability and universality in \cite{DeKuBl06}, and we take the same view. 

\begin{definition}
A subshift $X \subset S^\Z$ is \emph{decidable} if its model-checking problem for Muller automata is decidable. It is \emph{universal} if its model-checking problem for finite-state automata is $\Sigma^0_1$-complete.
\end{definition}

It is worth noting that the complexity of a subshift is often defined to be the complexity of its language, or coincides with it. For example, a \emph{$\Pi^0_1$ subshift} is a subshift whose language is $\Pi^0_1$. For decision problems, ``decidable'' is a synonym for ``recursive'', but ``decidable subshifts'' in the sense of the previous definition form a much smaller class than ``recursive subshifts'' (decidability implies that the language is recursive, but not vice versa). We will refer to subshifts whose language is recursive exclusively as \emph{recursive subshifts}.

Another difference in the definition of \cite{DeKuBl06} is that the regular languages are over clopen sets instead of letters of the alphabet. One way to formulate this is that, instead of asking whether $\B(X) \cap L \neq \emptyset$ for a given regular language $L$, we ask whether $\B(\phi(X)) \cap L \neq \emptyset$ for a factor map $\phi : X \to Y$ where $Y \subset T^\Z$ is a subshift and $L \subset T^*$ is a regular language. If $X$ is recursive, it is easy to see that this does not change the definitions of universality and decidability, since a finite-state machine is smart enough to look at the subshift through a factor map (we skip the easy proof).

In practise, to show undecidability, one (many-one) reduces the halting problem of Turing machines to the model-checking problem for regular languages, and usually the reduction will be far from surjective, in the sense that the regular languages produced by the reduction are of a specific form. By considering these different forms, one can get more fine-grained information on the degree to which the subshift is decidable. 
For this general definition, it is useful to include clopen sets, since, when restricting to small classes of regular languages, we may not be able to simulate factor maps (or even conjugacies), leading to dynamically unnatural decision problems.

\begin{definition}
Let $\mathcal{C} = \bigcup_i \mathcal{C}_i$ be a class of regular languages, where each $\mathcal{C}_m$ is over the alphabet $[1, m] \subset \N$. The \emph{model-checking problem for $\mathcal{C}$ and a subshift $X$} is, given a language $L$ in $\mathcal{C}_m$ for some $m$ and a clopen partition $X = [C_1] \dot\cup \cdots \dot\cup [C_m]$, to decide whether there exists a word $w \in L$ and point $x \in X$ such that $\sigma^i(x) \in [C_{w_i}]$ for all $i \in [0, |w|-1]$.
\end{definition}

As noted above, this definition is equivalent to checking $\B(\phi(X)) \cap L \neq \emptyset$ for factor maps $\phi$, and universality of a subshift means that its model-checking problem for the whole class of regular languages is $\Sigma^0_1$-complete.

\begin{definition}
The \emph{starfree languages over $[1, m]$} are the closure of finite languages over $[1, m]$ under concatenation, complementation (with respect to $[1, m]^*$) and union.\footnote{In other words, they are defined by regular expressions without Kleene star.} By an \emph{elementary piecewise testable language over $[1, m]$}, we mean a language of the form $a_1 S^* a_2 S^* a_3 S^* \cdots S^* a_k$ where $S = [1, m]$, $a_i \in S$ for all $i \in [1, k]$, and a \emph{piecewise testable language over $[1, m]$} is any Boolean combination of elementary piecewise testable languages. A \emph{local language over $[1, m]$} is a language of the form $(AS^* \cap S^*B) \setminus S^*FS^*$, where $A, B \subset S$ and $F \subset S^2$. By a \emph{renewal language} we mean one of the form $u(w_1 + ... + w_k)^*v$ where $u, v, w_1, \ldots, w_k \in S^*$.
\end{definition}

In the case of starfree languages and piecewise testable languages, it is important to parametrize the class of languages with the alphabet $[1, m]$, and consider languages over $[1, m]$ when the space is partitioned into $m$ clopen sets, because the alphabet $S$ plays a special role in the definitions -- for local languages and renewal languages, this is not necessary. Because $S^* = \emptyset^C$, piecewise testable languages and locally testable languages are both starfree.

The starfree languages are a well-studied class of regular languages, and were also essentially studied in the context of dynamical systems in \cite{DeKuBl05} (an earlier version of \cite{DeKuBl06}), as definability in temporal logic corresponds to starfreeness \cite{Th90} (although this connection was not made explicit). ``Elementary piecewise testable'' is not a standard term, but piecewise testable languages are again well-studied (although there are many variations of the basic idea in the literature). Local languages are important in the theory of regular languages, as they are to regular languages what vertex shifts are to sofic shifts. We could have equally well used \emph{locally testable languages}, which correspond more directly to SFTs, but this is mostly irrelevant for dynamical considerations. We do not know of an occurrence of renewal languages in the literature, but they are an obvious ``language version'' of the well-studied class of sofic shifts called \emph{renewal systems}.\footnote{One can imagine many other such language versions as well, and we do not claim this to be the ``correct'' one -- for the purpose of this article, it is mainly a mnemonic.}

Another approach to decidability is to consider problems of reachability between clopen sets. We talk about the following problems:

\begin{definition}
In the following problems, defined for a subshift $X$, we are given clopen sets $[C], [D], [E], [F]$ and numbers $k, m$, or a subset of these inputs, and need to decide whether there exists a point $x \in X$, $j \in \Z$ and $A \subset [1, j-1]$ with $|A| = k$, or a subset of these, with particular properties.
\begin{itemize}
\item In the \emph{undirected halting problem}, the properties $x \in [C]$ and $\sigma^j(x) \in [D]$.
\item In the \emph{halting problem}, the properties $j \in \N$, $x \in [C]$ and $\sigma^j(x) \in [D]$.
\item In the \emph{halting problem along a clopen set}, the properties $j \in \N$, $x \in [C]$, $\sigma^j(x) \in [D]$ and $\forall i \in [1, j-1]: \sigma^i(x) \in E$.
\item In the \emph{counting problem}, the properties $j \in \N$, $x \in [C]$, $\sigma^j(x) \in [D]$, $\forall i \in [1, j-1] \setminus A: \sigma^i(x) \in [E]$ and $\forall i \in A: \sigma^i(x) \in [F]$.
\item In the \emph{modular halting problem}, the properties $j \in \N$, $j \equiv k \bmod m$ with $x \in [C]$ and $\sigma^j(x) \in [D]$.
\item In the \emph{modular halting problem along a clopen set}, the properties $j \in \N$, $j \equiv k \bmod m$ with $x \in [C]$, $\sigma^j(x) \in [D]$ and $\forall i \in [1, j-1]: \sigma^i(x) \in E$.
\end{itemize}
\end{definition}

The halting problem was also defined in \cite{DeKuBl06}, and for Turing machines, considered as a dynamical system (with moving tape or moving head, see \cite{Ku97}), it essentially corresponds to the usual halting problem. It is easy to see that the halting problem is a special case of the model-checking problem for elementary piecewise testable languages.

The halting problem along a clopen set is easily seen to be a special case of the model-checking problem for local languages (and thus starfree languages) and renewal languages. It is also a special case of that of piecewise testable languages: For any $a_1, a_2, a_3 \in S$, the language $L = \{a_1, a_2, a_3\}^*$ is piecewise testable, and the language
\[ L' = a_1 S^* \cap S^* a_2 \cap (S^+ \{a_1, a_2\} S^+)^C \]
of words beginning with $a_1$, ending in $a_2$ and not containing other occurrences of these symbols is piecewise testable, because it is the intersection of
\[ a_1 S^* \cap (S^+ a_1 S^*)^C = \bigcap_{a \neq a_1} (S^* a S^* a_1 S^*)^C \cap S^* a_1 S^* \]
and $S^* a_2 \cap (S^* a_2 S^+)^C$. The halting problem along a clopen set is, up to choosing a suitable clopen partition, the model-checking problem for the single piecewise testable language $L \cap L'$.

The counting problem is a special case of the model-checking problem for piecewise-testable languages as well: For $a_1, a_2, a_3, a_4 \in S$, the language $L \cap L' \cap L_k \cap L_{k+1}^{C}$ is piecewise-testable, where $L = \{a_1, a_2, a_3, a_4\}^*$, $L'$ is as above, and $L_i = (S^* a_4)^i S^*$ is the language of words containing at least $i$ occurrences of the symbol $a_4$, which is piecewise testable by form.

Finally, the modular halting problem (along a clopen set or not) is a special case of the model-checking problem for renewal languages: For $a_1, a_2, a_3 \in S$, set $u = a_1 a_3^k$, $v = a_2$ and $w_1 = a_3^m$. Then $uw_1^*v$ is a renewal language. We note that this is not a special case of the model-checking problem for starfree languages, and indeed should not be: a well-known characterization of starfreeness of a language is that it is recognized by a finite state machine that does no modular counting \cite{McPa71}.

\subsection{The generating order}

The following object is very useful in studying subshifts, and was introduced in \cite{BaDuJe08} in the context of countable multidimensional SFTs.

\begin{definition}
The \emph{pattern preorder} of a subshift $X \subset S^\Z$ is the order $x \leq y \iff \B(x) \subset \B(y)$.
\end{definition}

We will, instead, talk about the \emph{subpattern poset}, meaning the preordered set with elements $X$ and the pattern preorder as the preorder. If $x \leq y \leq x$, we write $x \sim y$.

We wish to extend this relation to finite words as well. An obvious way to do this would be to define $u \leq v \iff \B(u) \subset \B(v)$, so that $u \leq v \iff u \sqsubset v$. However, the following definition is more useful:

\begin{definition}
The \emph{generating (pre-)order} of a subshift $X$ has elements $\B(X)$ and preorder $u \leq_X v \iff (\forall x \in X: v \sqsubset x \implies u \sqsubset x)$.
\end{definition}

By compactness, $u \leq_X v$ means that there exists $k \in \N$ such that
\[ \forall x \in X: x_{[0, |v|-1]} = v \implies u \sqsubset x_{[-k,|v|+k-1]}. \]
We again talk about the generating poset to mean the preordered set $\B(X)$ with the generating preorder.

Abusing notation, for a word $u$ and a point $x$, we write $x \leq_X u$ if $v \leq_X u$ for all $v \sqsubset x$. When $X$ is clear from context, in particular when $u$ and $v$ are explicitly chosen from $X$, we write $u \leq v$ for $u \leq_X v$.

\begin{lemma}
\label{lem:LeqSemidecidable}
Let $X$ be a $\Pi^0_1$ subshift. Then, given $u, v$ it is semidecidable whether $u \leq_X v$. In fact, there is an algorithm that, given $u, v$ with $u \leq_X v$, computes $h_{u,v}$ such that
\[ |w| \geq |v|+2h_{u,v} \wedge w_{[h_{u,v}, h_{u,v}+|v|-1]} = v \implies u \sqsubset w. \]
\end{lemma}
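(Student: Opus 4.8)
The plan is to observe that $u \leq_X v$ is, by the compactness reformulation already stated in the excerpt, a statement of the form ``there exists $k$ such that a certain finitary condition holds'', and that each such finitary condition is decidable once we know enough forbidden words. Concretely, $u \leq_X v$ holds iff there exists $k \in \N$ with the property that every word $w \in \B(X)$ with $|w| = |v| + 2k$ and $w_{[k, k+|v|-1]} = v$ satisfies $u \sqsubset w$. So the algorithm for semideciding $u \leq_X v$ should search over $k = 0, 1, 2, \ldots$, and for each $k$ try to certify that no ``bad'' window of length $|v|+2k$ exists in $X$.

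The key point is how to certify this using only the $\Sigma^0_1$ presentation. Since $X$ is $\Pi^0_1$, by the third clause of the Lemma preceding this one we have a Turing machine enumerating a list $w_1, w_2, \ldots$ of forbidden words with $x \in X \iff \forall i: w_i \not\sqsubset x$. Run this enumeration; after seeing the first $n$ forbidden words we obtain an over-approximating SFT-like set $X_n = \{x : w_i \not\sqsubset x \text{ for } i \leq n\} \supseteq X$ whose length-$m$ language $\B_m(X_n)$ is computable for every $m$. Now interleave two searches: for each pair $(k, n)$ with $k, n \in \N$, check whether \emph{every} word $w$ of length $|v| + 2k$ that lies in $\B_{|v|+2k}(X_n)$ and has $v$ in the central position $[k, k+|v|-1]$ contains $u$ as a subword. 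This is a finite check. If it ever succeeds for some $(k,n)$, then a fortiori it holds with $\B(X) \subseteq \B(X_n)$ in place of $\B(X_n)$, so $u \leq_X v$ holds and we halt and output $h_{u,v} := k$. Conversely, if $u \leq_X v$, then by the compactness statement there is a $k$ witnessing it for $X$; since $\B_{|v|+2k}(X) = \bigcap_n \B_{|v|+2k}(X_n)$ and this is a decreasing intersection of finite sets, for $n$ large enough $\B_{|v|+2k}(X_n) = \B_{|v|+2k}(X)$, so the check succeeds at $(k, n)$ and the algorithm halts. This establishes semidecidability and simultaneously produces the desired bound $h_{u,v}$, since the value $k$ we output is exactly an integer for which $|w| \geq |v| + 2h_{u,v}$ and $w_{[h_{u,v}, h_{u,v}+|v|-1]} = v$ force $u \sqsubset w$ for all $w \in \B(X)$ — and hence for all $w$ occurring in points of $X$.

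One small subtlety to handle carefully: the statement to be certified quantifies over \emph{words} $w$ with $v$ in the center, not over \emph{points}; but $w \in \B(X)$ already means $w$ extends to a point of $X$, and ``$u \sqsubset w$'' only depends on $w$, so the window reformulation is literally the compactness statement quoted in the excerpt, with no gap. The only genuinely non-formal ingredient is the decreasing-intersection argument guaranteeing $\B_m(X_n) \to \B_m(X)$, which is immediate from König's lemma / compactness: if a word $w$ of length $m$ lies in every $X_n$ then it extends to arbitrarily long words avoiding $w_1, \ldots, w_n$ for all $n$, hence by compactness to a bi-infinite point avoiding all $w_i$, i.e.\ $w \in \B_m(X)$.

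**Main obstacle.** There is no deep obstacle; the ``hard part'' is purely bookkeeping — making sure the two unbounded searches (over the cutoff $k$ and over the number $n$ of enumerated forbidden words) are dovetailed so that the algorithm is guaranteed to halt whenever $u \leq_X v$, and confirming that the number $k$ it returns genuinely serves as $h_{u,v}$ in the displayed implication. One should also note the algorithm is \emph{uniform}: it takes $u$, $v$ (and the index of the machine $M$) as input, which is what later parts of the paper will presumably want.
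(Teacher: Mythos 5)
Your proposal is correct and follows essentially the same route as the paper's proof: use compactness to reduce $u \leq_X v$ to a finite condition on some window length $|v|+2k$, observe that this condition must already hold in a sufficiently good SFT approximation obtained by enumerating forbidden words, and dovetail the search over $k$ and the approximation index to both semidecide the relation and extract $h_{u,v}$. The extra bookkeeping you supply (the decreasing intersection $\B_m(X_n) \downarrow \B_m(X)$ and the word-versus-point remark) just makes explicit what the paper's terser argument leaves implicit.
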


\begin{proof}
If $u \leq v$, then all long enough legal patterns of $X$ containing $v$ in the center contain $u$ as well. By compactness, the same is true for some SFT approximation of $X$, which we eventually find by enumerating forbidden patterns of $X$. This yields $h_{u,v}$ as a side-product. 
\end{proof}

The poset induced by the $\leq_X$-order on words is not a conjugacy invariant. However, $\leq_X$ extends naturally to clopen sets by $[C] \leq_X [D] \iff x \in [D] \implies \mathcal{O}(x) \cap [C] \neq \emptyset$, and the poset of clopen sets is obviously conjugacy invariant. It is a direct corollary of the previous lemma that $[C] \leq_X [D]$ is semidecidable for given cylinders $[C]$ and $[D]$.

\section{Quasiminimality and undecidability}
\label{sec:Universal}

In this section, we present our constructions of universal subshifts mentioned in Section~\ref{sec:Intro} and prove their correctness. The decidability results complementing them are given in Section~\ref{sec:Decidability}. To get up to speed, we begin with some simpler examples. First, we give a few examples of quasiminimal but non-minimal subshifts which are in a sense already well-known: subshifts generated by letter-to-word substitutions. Then, we give a universal subshift with finitely many minimal subshifts, but infinitely many subshifts in total, already proving Proposition~\ref{prop:IntroOneMinimal} and refuting Conjecture~\ref{con:IntroManySubsystems}. Then, we present our main constructions: universal quasiminimal subshifts. We conclude this section with a minimal subshift whose model-checking problem for context-free languages is $\Sigma^0_1$-complete.

\subsection{Non-universal quasiminimal examples}
\label{sec:Substitutions}

In this section, we consider classical symbol-to-word substitutions on a finite alphabet $S$. It is well-known that a \emph{primitive} substitution, that is, a substitution $\tau : S \to S^*$ such that
\[ \exists n: \forall a, b \in S: b \sqsubset \tau^n(a), \]
generates a minimal subshift (in the sense defined below). A non-primitive substitution does not necessarily generate a minimal subshift, but it usually\footnote{Below, we give a proof under some assumptions on the substitution.} generates a quasiminimal subshift. We show some examples, and sketch the proofs of their quasiminimality. These examples are very similar to the universal examples we construct later.

\begin{definition}
Let $\tau : S \to S^*$ be a substitution, define a subshift $X_\tau \subset S^\Z$ that is \emph{generates} by
\[ x \in X_\tau \iff \forall j,k \in \N: \exists a \in S, \ell \in \N: x_{[j,k]} \sqsubset \tau^\ell(a). \]
\end{definition}

First, we give an example of a countable quasiminimal system which is not a union of minimal systems.

\begin{example}
\label{ex:CountableSubstitution1}
Let $\tau$ be the substitution $(0 \mapsto 0; 1 \mapsto 010)$. Then we have $\tau^n(0) = 0$ and $\tau^n(1) = 0^n 1 0^n$ for all $n \in \N$. Clearly, the substitution then generates the sunny side up subshift $\OC{^\infty 0 1 0 ^\infty}$. Since this subshift is the union of the orbits of $^\infty 0 1 0 ^\infty$ and $^\infty 0 ^\infty$, it is countable, and its only proper nontrivial subsystem is $\B^{-1}(0^*)$. \qee
\end{example}

The points need not be eventually periodic:

\begin{example}
\label{ex:CountableSubstitution2}
Let $\tau$ be the substitution $(0 \mapsto 00; 1 \mapsto 11; 2 \mapsto 20; 3 \mapsto 2301)$. Clearly, the only points where infinitely many central patterns are subwords of $\tau^n(0)$, $\tau^n(1)$ or $\tau^n(2)$ are $0^\Z$ and $1^\Z$. Thus, we only need to find out the limit points of $\tau^n(3)$. We have
\[ \tau^n(3) = 2 0^{n-1} 2 0^{n-2} 2 \cdots 2 0 2 \; 3 \; 0 1 00 11 0^{2^2} 1^{2^2} 0^{2^3} 1^{2^3} \cdots 0^{2^{n-1}} 1^{2^{n-1}}. \]
Since there is only one occurrence of the symbol $3$ in this word, there is a unique point in $X_\tau$ where $3$ occurs. In a proper subsystem, then, a finite subset of the symbols $\{0, 1, 2\}$ occurs. The proper subshifts of $X_\tau$ can be seen to be the countable sofic shifts
\[ \B^{-1}(0^*), \B^{-1}(1^*), \B^{-1}(0^*20^*), \B^{-1}(0^* 1^*), \B^{-1}(1^* 0^*) \]
and their finite unions. In particular, $X_\tau$ itself is countable. \qee
\end{example}

The following uncountable example is the basis of the more general construction in Lemma~\ref{lem:Construction}.

\begin{example}
\label{ex:UncountableSubstitution}
Let $\tau$ be the substitution $(0 \mapsto 00; 1 \mapsto 101)$. Then one can show by induction that $\tau^n(0) = 0^{2^n}$ and
\[ \tau^n(1) = \tau^{n-1}(1) 0^{2^n} \tau^{n-1}(1) = 1 0^{2^{\phi_0}} 1 0^{2^{\phi_1}} 1 0^{2^{\phi_2}} 1 \cdots 1 0^{2^{\phi_k}} 1 \] where $\phi$ is the ruler sequence and $k = 2^n-2$. Clearly, $\B^{-1}(0^*)$ is a subsystem. We claim that it is in fact the only subsystem.

Namely, we show that if $x \in X_\tau$ contains the symbol $1$, then it is weakly transitive. Suppose $x_0 = 1$ (by shifting $x$ if necessary). For any $k$, by definition of $X_\tau$ we must have that $x_{[-k,k]} \sqsubset \tau^n(1)$ for some $n$. We note that the ruler sequence has unique singularities, and thus does not contain subwords of the form $ab$ where $a, b \in \N$ and both are arbitrarily large -- in fact, every second symbol in it is $0$, and thus every second gap between two symbols $1$ is of length $1$. Taking a suitable $k$ and considering the word $\tau^n(1)$ such that $x_{[-k,k]} \sqsubset \tau^n(1)$, we see that either $x_{-2} = 1$ or $x_2 = 1$. For concreteness, suppose we are in the second case. Again, due to unique singularities, the ruler sequence does not contain words of the form $a0b$ where both $a$ and $b$ are arbitrarily large, and in fact one of them is always $1$. This means, again taking suitable $k$ and looking at the corresponding $\tau^n(1)$, that either $x_{-4} = 1$ or $x_{2+4} = 1$. For concreteness, suppose we are in the second case. The only extension of $01$ to the right by one symbol in the ruler sequence is $010$, so again looking at $\tau^n(1)$ for large $n$, we see that $x_8 = 1$, and $x_{[0,8]} = 101000101$.

We repeat this deduction infinitely many times: We look at the finite part of $x$ already considered by the process, and observe, using the unique singularities of the ruler sequence, that it cannot be the case that all the $1$s in $x$ are in this part, and in fact the already filled part must be continued by $0^{2^\ell}1$ to the right or $10^{2^\ell}$ to the left, for suitable $\ell$. We then extend this word deterministically using the properties of the ruler sequence given in Section~\ref{sec:Ruler}.

If we eventually fill the whole point $x$ with this deducion process (so that we find the new $1$ from both the left and the right side infinitely many times), then $x$ corresponds in a one-to-one fashion to a point $\psi \in \N^\Z$ in the orbit closure of the ruler sequence. If only one side is filled, then one can check that $x$ is in the orbit closure of either \[^\infty 0 . 1 0^{2^{\phi_0}} 1 0^{2^{\phi_1}} 1 0^{2^{\phi_2}} 1 0^{\phi_3} 1 \ldots \]
or
\[ \ldots 1 0^{2^{\phi_3}} 1 0^{2^{\phi_2}} 1 0^{2^{\phi_1}} 1 0^{2^{\phi_0}} 1 . 0^\infty, \]
and in either case it generates the whole subshift $X_\tau$. \qee
\end{example}

We believe that all substitutions generate quasiminimal systems. In the following, we show this under an additional condition: Let $\tau : S \to S^+$ be a substitution and let $S_\ell = \{a \in S \;|\; |\tau^n(a)| \rightarrow \infty\}$, the set of \emph{long symbols}. If for some $m$,
\[ w \sqsubset X_\tau \wedge |w| \geq m \implies S_\ell \cap \B_1(w) \neq \emptyset, \]
then we say \emph{long symbols are syndetic in $X_\tau$}.

Proposition~5.5 of \cite{BeKwMe09} shows that the above condition holds automatically if $X_\tau$ contains no periodic points, and in Proposition~5.6 they show that if this condition holds, then the number of minimal subshifts contained in $X_\tau$ is at most $|S|$. We show a similar result for the set of all subshifts.

\begin{proposition}
\label{prop:SubstitutionsQuasiminimal}
If $\tau : S \to S^+$ is a substitution and long symbols are syndetic in $X_\tau$, then $X_\tau$ is quasiminimal.
\end{proposition}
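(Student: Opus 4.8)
The plan is to reduce quasiminimality of $X_\tau$ to a finiteness statement about the orbit closures of its points, and then establish that finiteness via a rigidity (recognizability-type) property that syndeticity of long symbols forces on the hierarchical structure of $X_\tau$. \textbf{Reduction.} Every subshift $Y \subseteq X_\tau$ is closed and shift-invariant, hence $Y = \bigcup_{y \in Y} \OC{y}$. So if the family $\{\OC{x} : x \in X_\tau\}$ is finite, say of cardinality $r$, then every subshift of $X_\tau$ is a (finite, hence valid) union of members of this family, so $X_\tau$ has at most $2^r$ subshifts and is quasiminimal. Equivalently, the distinct orbit closures $\OC{x}$ correspond exactly to the $\sim$-classes of the pattern preorder, and a subshift of $X_\tau$ is precisely a union of such classes. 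Thus it suffices to show that the points of $X_\tau$ have only finitely many distinct orbit closures.

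\textbf{Normalization.} Syndeticity of long symbols implies that in \emph{every} point of $X_\tau$ the long symbols occur with bounded gaps: a word in $\B(X_\tau)$ of length $\geq m$ already contains a long symbol, so no point is built from short symbols alone. Note also that short symbols map to words over short symbols (otherwise $|\tau^n(a)|$ would grow), and that for each long symbol $a$ on a $\tau$-cycle we may, after passing to a suitable power of $\tau$ --- which changes neither the subshift $X_\tau$ nor the statement, or else arguing along the cycle directly --- assume $a \sqsubset \tau(a)$, so $\tau(a) = u\,a\,v$ and $a$ seeds a canonical hierarchical point $z^a \in X_\tau$ (possibly one-sided, when $u$ or $v$ is empty). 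There are finitely many cycles, hence finitely many seeds $z^a$, and by Proposition~5.6 of \cite{BeKwMe09} at most $|S|$ minimal subshifts inside $X_\tau$.

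\textbf{Rigidity of the hierarchy (the main step).} Fix $x \in X_\tau$, and fix an occurrence $x_i = a$ of a long symbol. Since $a$ is long, for every large $\ell$ each central window $x_{[-k,k]}$ is covered by an occurrence of $\tau^\ell(b)$ for some long $b$; because long symbols are syndetic, the location of this covering block is pinned down up to boundedly many choices. The key claim is that this yields an essentially unique \emph{desubstitution} of $x$: a cutting of $x$ into blocks $\tau(c)$, $c \in S$, such that the induced symbol sequence is again a point of $X_\tau$, and that iterating produces a nested hierarchy of such cuttings. Syndeticity of long symbols is exactly what excludes the ambiguities that defeat desubstitution in the non-primitive case: a maximal run of short symbols can never hide the next long symbol, so only boundedly many block lengths and ``cutting phases'' ever occur. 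This is the general form of the hands-on argument of Example~\ref{ex:UncountableSubstitution}, where the unique-singularities property of the ruler sequence played the role of syndeticity. Granting the hierarchy, $\OC{x}$ is determined by a finite amount of combinatorial data: which $\tau$-cycles appear cofinally up the hierarchy (equivalently, which minimal subshifts lie below $x$), together with the \emph{boundary type} of $x$ relative to the relevant seeds $z^a$ --- whether the hierarchy of $x$ reconstructs a full bi-infinite seed, a one-sided seed padded on the other side by a strictly lower configuration, or neither, which is precisely the trichotomy at the end of Example~\ref{ex:UncountableSubstitution}. Since this data ranges over a finite set, there are finitely many possibilities for $\OC{x}$, and by the Reduction $X_\tau$ is quasiminimal.

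\textbf{Main obstacle.} The crux is the rigidity claim: defining, and proving the essential uniqueness of, a desubstitution of an \emph{arbitrary} point of a non-primitive substitution subshift with several long symbols. There is no off-the-shelf recognizability theorem to invoke here, and one must simultaneously keep track of the ``transitional'' one-sided points (the one-sided seeds and their padded variants). This is the step where the syndeticity hypothesis is genuinely used, going beyond merely bounding the number of minimal subsystems; everything else is bookkeeping over the finitely many $\tau$-cycles and boundary types.
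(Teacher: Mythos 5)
Your reduction to finiteness of the family $\{\OC{x} \;|\; x \in X_\tau\}$ is sound, but the proof then rests entirely on the ``rigidity'' step, which you assert rather than establish and which you yourself flag as the main obstacle. Two things are missing there. First, the existence and essential uniqueness of a desubstitution of an \emph{arbitrary} point of $X_\tau$ is a recognizability-type statement that is genuinely delicate: it is already a theorem of Moss\'e in the primitive aperiodic case, and for non-primitive substitutions with several long symbols, syndeticity does not obviously rule out incompatible cuttings coming from distinct long symbols whose images overlap; no argument is given. Second, even granting a canonical hierarchy, the claim that $\OC{x}$ is determined by ``which cycles appear cofinally plus a boundary type'' amounts to a classification of all points of $X_\tau$ up to language-equivalence; you would still need to prove that two points sharing this data have equal languages, and that the data really ranges over a finite set. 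Neither is established, so the argument as written does not close.

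The paper's proof shows that none of this machinery is needed. It uses only the \emph{existence} of preimages --- every $x \in X_\tau$ equals $\tau^n(y)$ up to a shift for some $y \in X_\tau$, obtained as a limit of points generated by $\tau^{k-n}(a)$ for a long symbol $a$ --- and attaches to each subshift $Z \subset X_\tau$ the set $W_n(Z) \subset S^{m+1}$ of $(m+1)$-blocks occurring in some $n$-th preimage of some point of $Z$, where $m$ is the syndeticity constant. Syndeticity guarantees that any $u \sqsubset Z$ sits inside $\tau^n(w)$ for some $w \in W_n(Z)$ once $n$ is large enough that $|\tau^n(a)| \geq |u|$ for all long $a$; hence $W_n(Z) = W_n(Y)$ for arbitrarily large $n$ forces $\B(Z) = \B(Y)$. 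Since $W_n$ takes at most $2^{|S|^{m+1}}$ values, a pigeonhole over $n$ bounds the number of subshifts. To salvage your route you would have to prove the recognizability statement (substantial extra work, and finer than what is needed); otherwise you should replace it with a soft counting argument of this kind.
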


\begin{proof}
For each $n \geq 1$ and $x \in X_\tau$ there exists $y \in X_\tau$ such that $x = \tau^n(y)$ (up to shifting $x$). Namely, each subword of $x$ occurs in $\tau^k(a) = \tau^n(\tau^{k-n}(a))$ for arbitrarily large $k$ and a long symbol $a$, and we obtain $y$ as a limit point of $\tau^{k-n}(a)$. Let $m$ be such that every word of length $m$ in $X_\tau$ contains a long symbol. Let $Z \subset X_\tau$ be a subshift, and for each $n \in \N$, associate to $Z$ the following set:
\[ W_n(Z) = \{ w \in S^{m+1} \;|\; \exists x, y \in X_\tau: x \in Z \wedge x = \tau^n(y) \wedge w \sqsubset y \}. \]

Note that for each $n$, $W_n$ can have at most $2^{|S|^{m+1}}$ distinct values. Thus, if $Z_1, Z_2, \ldots, Z_{2^{|S|^{m+1}}+1}$ are subsystems of $X_\tau$, we must have two indices $i \neq j$ such that $W_n(Z_i) = W_n(Z_j)$ for infinitely many $n$. It is enough to show that this implies $\B(Z_i) = \B(Z_j)$, since subshifts with the same language are equal.

Thus, suppose that $Z, Y$ are subshifts of $X_\tau$ with $W_n(Z) = W_n(Y)$ for arbitrarily large $n$. Let $u \sqsubset Z$ be arbitrary. Let $k$ be such that $|\tau^k(a)| \geq |u|$ whenever $a$ is a long symbol, and choose $n \geq k$ such that $W_n(Z) = W_n(Y)$. Since $\tau(S) \subset S^+$, $\tau$ can only increase the length of words, and thus we have $|\tau^n(a)| \geq |u|$ for all long symbols $a$.

Choose a point $z \in Z$ such that $u \sqsubset z$, and let $z = \tau^n(y)$ where $y \in X_\tau$. Since long symbols are syndetic in $X_\tau$ and $|\tau^n(a)| \geq |u|$ for long symbols $a$, it is easy to see that there is a subword $w$ of $y$ of length $m+1$ with $u \sqsubset \tau^n(w)$. By the assumption $W_n(Z) = W_n(Y)$, $w$ is also a subword of some $y' \in X_\tau$ such that $\tau^n(y) \in Y$, so $u \sqsubset Y$.

This shows $u \sqsubset Y$, and since $u \sqsubset Z$ was arbitrary, we have $\B(Z) \subset \B(Y)$. Symmetrically, we obtain $\B(Y) \subset \B(Z)$, which concludes the proof. 
\end{proof}

The upper bound we obtain for the number of subsystems $2^{S^{m+1}}$, where $m$ is the bound for the length of the gap between two long symbols. In the case that all symbols are long, we have $m = 1$, and obtain the upper bound $2^{|S|^2}$ for the number of subsystems. The next example shows that this is in the right ballpark.\footnote{We only look at the case $m = 1$ for simplicity, but one can add short symbols between long symbols in the word $\tau(a) = w$ in the example, to get roughly $2^{|S_\ell|^2 |S \setminus S_\ell|^{m-1}}$ subshifts, for a partition $S = S_\ell \cup (S \setminus S_\ell)$.}

\begin{example}
Let $k \in \N$, let $S = \{a, b_1, \ldots, b_k\}$, and define the substitution $\tau$ with $\tau(b_i) = b_i^2$ for all $i \in [1,k]$ and $\tau(a) = w$, where $w \in \{b_1, \ldots, b_k\}$ is such that $b_i b_j \sqsubset w$ for all $i, j \in [1, k]$. It is easy to see that $X_\tau = \B^{-1}(\bigcup_{i,j} b_i^* b_j^*)$.

Choose a subset $K \subset [1, k]$, and a subset
\[ J \subset \{ (i, j) \in [1, k]^2 \;|\; i \neq j \} \]
such that $(i, j) \in J \implies \{i, j\} \subset K$. For such $(K, J)$, let
\[ Y_{K, J} = \B^{-1}(\bigcup_{i \in K} b_i^* \cup \bigcup_{(i, j) \in J} b_i^* b_j^*). \]

We note that we can determine the sets $K$ and $J$ from $Y_{K,J}$, and if $Y$ is a subshift of $X_\tau$, then $Y = Y_{K,J}$ for some $(K, J)$ with the above properties. Thus, to compute the number $B(k)$ of subsystems of $X_\tau$, we only need to compute the number of pairs $(K, J)$ with these properties. The number of such pairs is just the number of directed graphs whose vertices form a subset of $[1, k]$.

Letting $A(j) = 2^{j(j-1)}$ be the number of directed graphs with vertices $[1,j]$, we have
\[ B(k) = \binom{k}{0} A(0) + \binom{k}{1} A(1) + \binom{k}{2} A(2) + \cdots + \binom{k}{k} A(k). \]
Of course, $B(k) \geq A(k) = 2^{j(j-1)}$, so there exists a substitution on an alphabet of $k$ symbols with $B(k-1) \geq 2^{(k-1)(k-2)}$ subsystems. \qee
\end{example} 

In the OEIS database, $B(i)$ is the sequence A135756, and the first few values are
\[ B(0) = 1, B(1) = 2, B(2) = 7, B(3) = 80, B(4) = 4381, B(5) = 1069742. \]
We note that the upper bound for the number of subshifts of $X_\tau$ which are unions of minimal subshifts is $2^{|S|}$ by the result of \cite{BeKwMe09}. For $|S| = 6$, the number of such subsystems of $X_\tau$ is at most $2^6 = 64$, but one can have $B(5) = 1069742$ subsystems in total by the previous example.

Example~\ref{ex:CountableSubstitution2} and Example~\ref{ex:UncountableSubstitution} both have the property of syndetic long symbols, as all symbols are long. Example~\ref{ex:CountableSubstitution1} does not have this property, as $0$ is not a long symbol, but clearly the subshift does not change if the image of $0$ is changed to $00$. However, we believe there is no substitution with syndetic long symbols generating a subshift conjugate to $X_\tau$ where
\[ \tau = (0 \mapsto 0; 1 \mapsto 10; 2 \mapsto 021), \]
where the point generated by the symbol $2$ is
\[ ...0000002110100100010000100000... \]
Of course, even though long symbols are not syndetic, it is not very hard to show that this subshift is quasiminimal. In general, one might be able to obtain a proof for general substitutions by analysing the proof of Proposition~5.5 in \cite{BeKwMe09} in more detail.


We show that the model-checking problem for regular languages is decidable for these systems.

\begin{lemma}
\label{lem:RegularIntersection}
Given a substitution $\tau : S \to S^*$, a symbol $a \in S$ and a regular language $L$, it is decidable whether $\tau^n(a) \in L$ for some $n \in \N$. In particular, it is decidable whether $\B(X_\tau) \cap L = \emptyset$ for a given regular $L$.
\end{lemma}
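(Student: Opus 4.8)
The plan splits along the two assertions. For the first, fix a deterministic finite automaton $M=(Q,q_{0},\delta,F)$ recognizing $L$, extend $\delta$ to $\hat\delta\colon Q\times S^{*}\to Q$, and for a word $u$ let $[u]\in Q^{Q}$ be the map $q\mapsto\hat\delta(q,u)$, so $[uv]=[v]\circ[u]$ and $\{[u]\mid u\in S^{*}\}$ is a finite monoid. Put $t_{n}^{b}=[\tau^{n}(b)]$ for $b\in S$, $n\in\N$. Then $t_{0}^{b}=[b]$, and if $\tau(b)=c_{1}\cdots c_{r}$ then $t_{n+1}^{b}=t_{n}^{c_{r}}\circ\cdots\circ t_{n}^{c_{1}}$ (an empty composition being $\ID$ when $\tau(b)$ is empty). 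Hence the tuple $\bar t_{n}=(t_{n}^{b})_{b\in S}$ lives in the finite set $(Q^{Q})^{S}$ and satisfies $\bar t_{n+1}=\Phi(\bar t_{n})$ for an explicitly computable $\Phi$, so $(\bar t_{n})_{n\ge 0}$ is eventually periodic. Computing all of its finitely many distinct values, we have $\tau^{n}(a)\in L$ for some $n$ iff $t_{n}^{a}(q_{0})\in F$ for one of them, which is a finite check. (Erasing letters of $\tau$, and the case $\varepsilon\in L$, cause no trouble, being visible in the automaton and in this analysis.)

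For the ``in particular'' part, write $S^{\ge d}=S^{d}S^{*}$ and $P=\bigcup_{a,n}\mathrm{Fact}(\tau^{n}(a))$; the definition of $X_\tau$ says exactly that $X_\tau=\{x\mid x_{[i,j]}\in P$ for all $i\le j\}$, so by a compactness argument a word $w$ lies in $\B(X_\tau)$ iff for every $d$ there are $a,n$ with $\tau^{n}(a)\in S^{\ge d}\{w\}S^{\ge d}$. Since $\B(X_\tau)$ is factor-closed we may replace $L$ by $S^{*}LS^{*}$, i.e.\ assume $L=S^{*}LS^{*}$; then the previous characterization together with the pigeonhole principle over the finite set $S$ gives
\[ \B(X_\tau)\cap L\neq\emptyset\ \Longleftrightarrow\ \exists\,a\in S:\ \forall\,d\ \exists\,n:\ \tau^{n}(a)\in S^{\ge d}\,L\,S^{\ge d}. \]
The implication ``$\Rightarrow$'' is immediate. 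For ``$\Leftarrow$'' one uses a desubstitution descent: if $d>\max_{b}|\tau(b)|=:D$ and $\tau^{n}(a)\in S^{\ge d}LS^{\ge d}$, the witnessing occurrence lies inside $\tau(c_{i}\cdots c_{j})$ for a factor $c_{i}\cdots c_{j}$ of $\tau^{n-1}(a)$ appearing there at depth at least $d/D-1$, so $\tau^{n-1}(a)\in S^{\ge d'}L_{1}S^{\ge d'}$ with $L_{1}=\tau^{-1}(S^{*}LS^{*})$ and $d'\approx d/D$; combined with the fact that $\tau$ maps $\B(X_\tau)$ into itself (since $x\in X_\tau$ implies $\tau(x)\in X_\tau$), iterating and passing to a limit produces an honest point of $X_\tau$ carrying an $L$-factor.

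To decide the displayed condition, fix $a$: the slice ``$\exists n:\tau^{n}(a)\in S^{\ge d}LS^{\ge d}$'' is decidable by the first assertion (applied to the effectively regular language $S^{\ge d}LS^{\ge d}$) and is antitone in $d$, so it suffices to bound the ``$\forall d$'' quantifier. Iterating the descent produces the languages $L_{m}=(\tau^{m})^{-1}(S^{*}LS^{*})$; membership of $b_{1}\cdots b_{s}$ in $L_{m}$ is governed by the transition maps of a fixed automaton for $S^{*}LS^{*}$ evaluated on $\tau^{m}(b_{1}),\ldots,\tau^{m}(b_{s})$, so, exactly as above, the sequence $(L_{m})_{m}$ is eventually periodic; feeding this back into the depth bookkeeping yields a computable $d^{*}$ past which testing $d=d^{*}$ suffices. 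The hard part is precisely this second assertion — keeping track of depths of factor-occurrences under iterating the (length-irregular, possibly erasing) map $\tau$, including the case where the witnessing $L$-words have unbounded length. This bookkeeping is genuinely necessary: for instance for $\tau=(a\mapsto ab;\ b\mapsto b)$ one has $\B(X_\tau)=b^{*}$, so $ab$ is a factor of $\tau(a)$ but of no point of $X_\tau$, and the naive criterion ``$\B(X_\tau)\cap L=\emptyset$ iff no $\tau^{n}(a)$ meets $S^{*}LS^{*}$'' is false. One clean way to organize matters is to first restrict to the sub-alphabet of letters actually occurring in $X_\tau$ and pass to a power of $\tau$, reducing to the case where every occurring letter grows, as in \cite{BeKwMe09}.
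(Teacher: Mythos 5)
Your treatment of the first assertion is correct and is essentially the paper's own proof: the paper records, for each letter $s$, the relation that $\tau^i(s)$ induces on the states of an automaton for $L$, and observes that this sequence of tuples in a finite set is computably eventually periodic; your DFA/transition-map version with $\bar t_{n+1}=\Phi(\bar t_n)$ is the same argument in slightly cleaner clothing.

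For the ``in particular'' part you deliberately deviate from the paper, which simply checks whether $\tau^n(a)\in S^*LS^*$ for some $n,a$, i.e.\ treats $\mathrm{Fact}(\{\tau^n(a)\})$ as if it were $\B(X_\tau)$. Your objection to that naive criterion is well taken: your example $a\mapsto ab$, $b\mapsto b$ does show the two languages can differ, and the paper's one-sentence justification is valid only under an implicit hypothesis such as every $\tau^n(a)$ being a factor of $X_\tau$ (true for the substitutions the paper actually uses, where all symbols occur in the subshift, but not for arbitrary $\tau:S\to S^*$). However, your proposed repair contains a genuine gap: the displayed equivalence is false. Take $S=\{a,b,c\}$, $\tau(c)=ca$, $\tau(a)=ab$, $\tau(b)=b$, and $L=ab^*a$, so that $\tau^n(c)=c\,a\,ab\,ab^2\cdots ab^{n-1}$. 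The unique occurrence of $ab^ka\in L$ in $\tau^n(c)$ has exactly $1+k+k(k-1)/2$ symbols to its left (independently of $n$) and arbitrarily many to its right as $n\to\infty$; choosing first $k$ and then $n$ large, the right-hand side of your equivalence holds with the letter $c$. But $\B(X_\tau)=b^*\cup b^*ab^*$: any occurrence of $ab^ka$ in any iterate is that unique occurrence in some $\tau^n(c)$, whose left context terminates at the initial $c$ after $1+k+k(k-1)/2$ symbols, and $c$ admits no left extension, so no point of $X_\tau$ contains two $a$'s, and $\B(X_\tau)\cap S^*LS^*=\emptyset$. The failure is precisely at the step ``iterating and passing to a limit produces an honest point of $X_\tau$ carrying an $L$-factor'': when the witnessing $L$-words have unbounded length, the limit point need contain only longer and longer prefixes and suffixes of words of $L$, not a complete one, which is exactly what happens here. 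Your per-word depth characterization of $\B(X_\tau)$ is correct; it is the exchange of quantifiers (from a fixed word to ``some word of $L$'') that breaks.

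Since the equivalence is false, the ensuing algorithm (eventual periodicity of $L_m=(\tau^m)^{-1}(S^*LS^*)$ plus a ``computable $d^*$'') does not decide the problem, and in any case that bookkeeping was only sketched. Your closing suggestion does not rescue it either: restricting $\tau$ to the letters occurring in $X_\tau$ changes the subshift in this very example (the deep occurrences of $a$ are created by iterates of $c$, which itself never occurs in $X_\tau$, and the restricted substitution generates only $\{b^\Z\}$), and deciding which letters occur in $X_\tau$ is itself an instance of the problem being solved, so it cannot be used as an unproved preprocessing step. So: part one is correct and matches the paper; part two identifies a real soft spot in the paper's one-line argument but replaces it with a characterization that is provably wrong, leaving the decidability claim unestablished in the generality you aim for.
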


\begin{proof}
We may assume $L \subset S^*$. Let $A$ be a nondeterministic finite state automaton for $L$ with state set $Q$, initial state $q_s$, final state $q_t$ and transition function $\delta \subset Q \times S \times Q$, and extend $\delta$ to a relation $\delta \subset Q \times S^n \times Q$ for all $n$ in the usual way:
\[ (q, w, q') \in \delta \iff (q, w_0, q_0), (q_0, w_1, q_1), \cdots, (q_{n-2}, w_{n-1}, q') \in \delta \]
For all $s \in S$, let $R \in ((2^{Q \times Q})^S)^\N$ be defined by
\[ (q, q') \in (R_i)_s \iff \delta(q, \tau^i(s), q'). \]

We can compute $R_i$ for each $i$ easily from the definition of $\delta$, and $R_i$ takes its value in the finite set $(2^{Q \times Q})^S$. The set $(2^{Q \times Q})^S$ is finite, so let $t, p$ be such that $R_t = R_{t+p}$. Then, $R_t' = R_{t'+p}$ for all $t' > t$ as well: writing $w = \tau^{t' - t}(s)$, we have
\begin{align*}
(q, q') \in (R_{t'})_s &\iff \delta(q, \tau^{t'}(s), q') \\
&\iff \delta(q, \tau^t(w), q') \\
&\iff \delta(q, \tau^{t+p}(w), q') \\
&\iff \delta(q, \tau^{t'+p}(s), q') \\
&\iff (q, q') \in (R_{t'+p})_s,
\end{align*}
where the second $\iff$ follows because $R_t = R_{t+p}$, so that
\[ \delta(q, \tau^t(a), q') \iff \delta(q, \tau^{t+p}(a), q') \]
for all $a \in S$, and the third $\iff$ because this extends to all words $w \in S^*$ by the inductive definition of $\delta \subset Q \times S^n \times Q$.

Now, to check whether $\tau^n(a) \in L$ for some $n$, it is enough to check whether $(q_s, q_t) \in (R_i)_a$ for some $i \in [0, p+t-1]$. The second claim is proved by checking whether we have $\tau^n(a) \in S^*LS^*$ for some $n$ and $a \in S$.
\end{proof}

\subsection{Universal non-quasiminimal examples}

We present our (non-quasiminimal) example of a universal recursive subshift with finitely many minimal subsystems.

\begin{proposition}
\label{prop:OneMinimal}
There exists a recursive subshift $X \subset \{0,1,2,3\}^\Z$ which is contained in a countable SFT, has finitely many minimal subsystems, and has a $\Sigma^0_1$-complete halting problem. Every minimal subsystem of $X$ consists of a single unary point.
\end{proposition}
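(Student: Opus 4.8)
The plan is to encode a universal Turing machine into a $\Z$-subshift in such a way that all ``essential'' information lives along finite excursions whose structure is controlled by the ruler-sequence-like scaffolding discussed in Section~\ref{sec:Ruler}, while the only bi-infinite configurations that avoid the ``working'' symbol are unary points. Concretely, I would fix a Turing machine $M$ with a $\Sigma^0_1$-complete halting problem (say the standard universal machine) and build a subshift $X$ over $\{0,1,2,3\}$ where:
\begin{itemize}
\item the symbol $0$ plays the role of the ``blank'' filler, so that $0^\Z$ is one minimal subsystem;
\item a marked block delimited by occurrences of $1$ (or $1$ and $2$) carries a finite space-time diagram of a halting computation of $M$ on some input, with the input, the length of the computation, and the tape contents all encoded using $0$s, $1$s, $2$s, and $3$s in a self-checking way;
\item outside of one such block, a point of $X$ must be $0^\Z$ (or, in variants, one of finitely many unary points), because any occurrence of $1$, $2$ or $3$ forces a full valid halting block by an argument analogous to the ``deterministic extension'' arguments for the ruler sequence in Examples~\ref{ex:UncountableSubstitution}.
\end{itemize}
The forbidden-pattern set of $X$ should be recursive (indeed an SFT or countable sofic-type local rule, plus the requirement that the computation block be consistent), so $X$ is a recursive subshift contained in a countable SFT; countability follows because every point contains at most one finite ``disturbance'' to the all-$0$ background.

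The key steps, in order, are: (1) set up the local rules so that a finite block between two markers is forced to look like a valid, terminating computation history of $M$, using a product/layered alphabet collapsed onto $\{0,1,2,3\}$ and a ruler-type gadget to measure the (a priori unbounded) number of computation steps with only finitely many forbidden patterns — this is where the combinatorics of Section~\ref{sec:Ruler} (unique singularities, deterministic forced extensions) is reused; (2) verify that the only points with infinite language avoiding the marker symbols are the unary points, so every minimal subsystem is a single unary point — this is a compactness/forced-extension argument exactly parallel to the weak-transitivity argument in Example~\ref{ex:UncountableSubstitution}; (3) show $\B(X)$ is recursive, by observing that legality of a pattern reduces to checking the local rule plus simulating $M$ for a bounded number of steps determined by the pattern; (4) reduce the halting problem of $M$ to the model-checking problem for a regular (indeed elementary piecewise testable) language over $X$: ``does there exist a point containing marker-symbol $a_1$ followed later by marker-symbol $a_2$ with the input field equal to a given string'' is equivalent to ``$M$ halts on that input'', giving $\Sigma^0_1$-hardness; $\Sigma^0_1$-membership is immediate since $\B(X)$ is recursive.

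The main obstacle I expect is Step~(1): making the local (finite-window) rules simultaneously (a) force the block to encode a genuine computation history of $M$, (b) allow the block to have unbounded length so that arbitrarily long halting computations are represented, and (c) force that outside a single block everything collapses to a unary point. The tension is that an SFT cannot directly ``count'' unbounded time, so I would use a self-similar ruler scaffold (as in $0\,1\,0\,2\,0\,1\,0\,3\ldots$) to lay down a hierarchy of nested scales inside the block, with the $M$-configuration written on one track and the ruler marks on another, and local consistency checks between adjacent ruler cells enforcing both the recursive structure of the scaffold and the step-by-step transition relation of $M$. Getting the boundary behaviour right — ensuring that a half-infinite or badly-terminated partial block is forbidden, so that the orbit closure really is as small as claimed and the minimal subsystems are exactly the unary points — will require the same careful ``fill in forced symbols from both sides'' bookkeeping used in Example~\ref{ex:UncountableSubstitution}, and is the part most likely to need a delicate case analysis.
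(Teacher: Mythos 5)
Your plan is a genuinely different route from the paper's, and as described it has gaps that I do not think can be patched without abandoning its central idea. The paper's proof is far more economical: it takes $X$ to be the orbit closure of the points $x_i = \INF 0.1^i2^{i+h(i)}3\INF$ (when $T_i$ halts after $h(i)$ steps) and $x_i = \INF 0.1^i2\INF$ (when it does not), for an enumeration $T_1, T_2, \ldots$ of all Turing machines. Nothing about the computation is encoded except the index of the machine (in unary, as $1^i$) and its halting time (in unary, as the length of the block of $2$s, certified by the terminating symbol $3$). Recursiveness then reduces to running finitely many machines for a number of steps bounded by the length of the given word, the envelope is the countable SFT $\B^{-1}(0^*1^*2^*3^*)$, the minimal subsystems are the unary fixed points $\INF a \INF$ (since $X$ is countable, a minimal subsystem must be finite), and the halting problem between the clopen sets $[01^i2]$ and $[3]$ is exactly the halting problem of $T_i$.

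Two of your four required properties fail for the computation-history encoding. First, containment in a countable SFT: your blocks carry arbitrary space-time diagrams, so any fixed SFT containing all of them must contain essentially arbitrary words over the working alphabet and hence an uncountable subshift. Second, recursiveness of $\B(X)$: a word that is an interior chunk of a putative computation history, not containing the input field or the markers, determines a machine configuration, and deciding whether that configuration occurs in some halting run started from some legal input is itself a $\Sigma^0_1$-complete question with no computable search bound. Your Step~(1) also cannot work as literally stated: no finite-window rule over a one-dimensional alphabet can force a finite block to be a valid halting history (that would make basic decision problems for one-dimensional SFTs undecidable), so all verification must be pushed into the infinite recursive set of forbidden words, at which point the ruler scaffolding buys you nothing. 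The idea you are missing is that the proposition does not require verifying computations inside the subshift at all: it suffices to record, for each machine, whether and when it halts, and a single unary block length plus one terminating symbol does exactly that.
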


\begin{proof}
Enumerate the deterministic Turing machines as $T_1, T_2, T_3, \ldots$. We define a subshift by $X = \overline{\mathcal{O}(\{x_i \;|\; i \geq 1\})}$, where
\[ x_i = \INF 0.1^i2^{i + h(i)}3\INF, \]
if the machine $T_i$ halts exactly after $h(i) \in \N$ steps, and $x_i = \INF 0.1^i2\INF$ if it never halts.

This is a recursive subshift: First, all words $a^*b^*$ for $a \leq b \in \{0,1,2,3\}$ are in $\B(X)$. Let $i, j, k, \ell \geq 1$ be arbitrary. The word $0^i 1^j 2^k$ is in $\B(X)$ if and only if $T_j$ does not halt in the first $k$ steps, which is decidable. The word $0^i 1^j 2^k 3^\ell$ is in $\B(X)$ if and only if $T_j$ halts exactly after $k-j$ steps, which is decidable. The word $1^j 2^k 3^\ell$ is in $\B(X)$ if and only if some word of the form $0 1^{j'} 2^k 3^\ell$ is in $\B(X)$, where $j \leq j' \leq k$, which we can check by running all the machines $T_{j'}$ for at most $k$ steps. 

The subshift $X$ is contained in the countable SFT $\B^{-1}(0^*1^*2^*3^*)$, because each of the points $x_i$ is in this subshift. The only periodic points in $X$ are the points $\INF a \INF$ for $a \in \{0,1,2,3\}$. Since a minimal subshift is either finite or uncountable, the only minimal subsystems of $X$ are these singleton subshifts.

The undirected halting problem of $X$ is $\Sigma^0_1$-complete because solving the halting problem for the clopen sets $[01^i2]_0$ and $[3]_0$ is equivalent to solving the halting problem of $T_i$. 
\end{proof}

We make a few remarks about this subshift, omitting the easy proofs. 

\begin{enumerate}
\item This subshift has Cantor-Bendixson rank $3$. Countable subshifts of Cantor-Bendixson rank $1$ are finite, and those of rank $2$ are sofic \cite{SaTo14a}, so that this is the minimal possible rank for a countable universal subshift.

\item By forbidding the single letter $3$ from the subshift $X$, we obtain a system $Y$ whose language is not decidable, since $0 1^k 2 \sqsubset Y$ if and only if $T_k$ never halts. Thus, while $X$ is recursive, one could say it is not \emph{hereditarily recursive}, as it contains a $\Pi^0_1$ subshift which is not recursive. (Compare this with Corollary~\ref{cor:SubRecursive}.)

\item The subshift $X$ has only finitely many minimal subshifts, but more than one. By using the points
\[ x_i = \INF 0.10^{i}20^{i + h(i)}30\INF \]
in the proof, the enveloping countable SFT of Cantor-Bendixson rank~$4$ changes into a countable sofic shift with the same CB-rank, and there will be only one minimal subsystem.

\item One could say that we are cheating, and that a finite subsystem is not a sufficiently interesting example of a minimal subshift; indeed, in many contexts it makes sense to not even call such systems minimal, to avoid having to discuss these trivial cases. If, in the points $x_i = \INF 0.10^{i}20^{i + h(i)}30\INF$, we replace the maximal subwords of the form $0^k$ by (any!) subwords of length $k$ of a minimal subshift $Y$ over an alphabet disjoint with $\{1,2,3\}$, it is easy to check that the only minimal subsystem of $\overline{\mathcal{O}(\{x_i \;|\; i \geq 1\})}$ is~$Y$. Choosing the minimal subshift and the words suitably, the subshift can be made recursive as well.

\item A classical tool for studying minimal systems are the Bratteli-Vershik systems. It was proved in \cite{HePuSk92} that every minimal system is conjugate to such a system. In fact, the result of \cite{HePuSk92} applies more generally to systems containing exactly one minimal subshift, and thus to our example. Such systems are called \emph{essentially minimal systems}. The classes of essentially minimal systems and quasiminimal systems are incomparable.

\item In Proposition~9 of \cite{DeKuBl06}, it is shown that if the limit set of a symbolic dynamical system is a finite union of minimal systems, then it is decidable. As we are concerned with two-way subshifts, the limit set of a subshift is equal to the subshift itself. We note, however, that the \emph{asymptotic set} (the union of limit points of individual configurations) and its \emph{nonwandering set} (the points whose neighborhoods all return to themselves) of $X$ are both finite unions of minimal systems: in fact, these sets are finite, and can again be taken to be singletons.
\end{enumerate}

As discussed in the introduction, our system is a $\Z$-system, and one could also ask whether Conjecture~\ref{con:IntroManySubsystems} is true for $\N$-systems, where a priori there are more subsystems. Since minimal systems are clearly surjective, the following proposition shows that Proposition~\ref{prop:OneMinimal} resolves the case of $\N$-actions too. If $X \subset S^\N$ is a one-sided subshift, we say it is \emph{surjective} if the left shift $\sigma : X \to X$ is surjective.

\begin{proposition}
\label{prop:Correspondence}
For any subshift $X \subset \N^\Z$ let
\[ c(X) = \{y \in \N^\N \;|\; \exists x \in \N^{-\N}: x.y \in X\}. \]
For any surjective subshift $X \subset \N^\N$, let
\[ e(X) = \{x \in \N^\Z \;|\; \forall i \in \Z: x_{[i, \infty)} \in X\}. \]
These operations preserve the language of the subshift, and thus $c(e(X)) = X$ for any surjective $\N$-subshift, and $e(c(X)) = X$ for any $\Z$-subshift.
\end{proposition}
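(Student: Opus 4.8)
The plan is to prove four things in order: that $c(X)$ and $e(X)$ are subshifts over the same alphabet as $X$; that both operations leave the language unchanged, i.e. $\B(c(X))=\B(X)$ and $\B(e(X))=\B(X)$; that $c(e(X))=X$ for surjective $X\subset\N^\N$; and that $e(c(X))=X$ for every $X\subset\N^\Z$. Surjectivity of $X$ enters only in the two statements involving $e$ applied to a one-sided $X$; for the identity $e(c(X))=X$ nothing beyond $X$ being a $\Z$-subshift is needed, and one first checks that $c(X)$ is automatically surjective (if $y=z_{[0,\infty)}$ with $z\in X$, then the non-negative part of $\sigma^{-1}(z)\in X$ shifts onto $y$), so that $e(c(X))$ is well-defined. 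That $c(X)$ is closed follows from compactness of $S^\Z$: if $y^{(n)}\to y$ with witnesses $x^{(n)}.y^{(n)}\in X$, a convergent subsequence of these has its limit in $X$, and the right-tail of that limit is $y$ by continuity of the right-tail map; shift-invariance under the $\N$-shift is clear. That $e(X)$ is closed and shift-invariant is immediate, since it is an intersection of closed conditions and a right-tail of a right-tail is a right-tail.

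For the language statements, the inclusions $\B(c(X))\subseteq\B(X)$ and $\B(e(X))\subseteq\B(X)$ are immediate, because every word occurring in a right-tail of a point of $X$, resp. in a point all of whose right-tails lie in $X$, occurs in $X$. For $\B(X)\subseteq\B(c(X))$: given $w\sqsubset x\in X$, shift so that $w$ is a prefix and take the non-negative part, which lies in $c(X)$. For $\B(X)\subseteq\B(e(X))$: given $w\in\B(X)$, shift to obtain $z\in X$ (using $\sigma(X)\subseteq X$ for one-sided $X$) with $w$ a prefix of $z$; then, using surjectivity of $\sigma:X\to X$, inductively pick $z=z^{(0)},z^{(1)},z^{(2)},\dots\in X$ with $\sigma(z^{(k)})=z^{(k-1)}$, so $z^{(k)}$ is $z$ with $k$ letters prepended on the left. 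The bi-infinite point $x$ whose non-negative part is $z$ and whose $k$-th prepended letter sits at position $-k$ then satisfies $x_{[-k,\infty)}=z^{(k)}\in X$ for $k\ge 0$ and $x_{[i,\infty)}=\sigma^i(z)\in X$ for $i\ge 0$, so $x\in e(X)$ and $w\sqsubset x$.

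The two fixed-point identities now follow easily. For $c(e(X))=X$ with $X\subset\N^\N$ surjective: any $y\in c(e(X))$ equals $x_{[0,\infty)}$ for some $x\in e(X)$, hence $y\in X$ by definition of $e$; conversely, the left-extension above applied to $z=y$ produces $x\in e(X)$ with $x_{[0,\infty)}=y$, so $y\in c(e(X))$. For $e(c(X))=X$ with $X\subset\N^\Z$: if $x\in X$ then $x_{[i,\infty)}=(\sigma^i x)_{[0,\infty)}\in c(X)$ for all $i\in\Z$, so $x\in e(c(X))$; conversely, if $x\in e(c(X))$, then each tail $x_{[i,\infty)}$ lies in $c(X)$, witnessed by some $z^{(i)}\in X$ agreeing with $x$ on $[i,\infty)$ after shifting, so $w^{(i)}:=\sigma^{-i}(z^{(i)})\in X$ agrees with $x$ on $[i,\infty)$; letting $i\to-\infty$ we get $w^{(i)}\to x$, and closedness of $X$ gives $x\in X$.

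The only real care needed — the ``main obstacle'', though it is slight — is index bookkeeping in the left-extension construction, together with making sure surjectivity of $\sigma$ on $X$ is invoked exactly where (and only where) it is needed; the closedness argument for $e(c(X))\subseteq X$ and the verification that $c(X)$ and $e(X)$ are subshifts are entirely routine.
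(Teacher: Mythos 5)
Your proof is correct and complete. Note that the paper itself gives no proof of this proposition---it is stated as a well-known folklore correspondence---so there is nothing to compare against; your argument (subshift verification, the two language inclusions in each direction with the inductive left-extension via surjectivity, and the two fixed-point identities, including the observation that $c(X)$ is automatically surjective so that $e(c(X))$ makes sense) is the standard one and fills the gap cleanly.
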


The operations $c$ and $e$ and their correspondence are well-known, although we do not know an explicit reference for the precise statement above.

\subsection{Uncountable universal quasiminimal examples}

We now move on to our quasiminimal examples in the case of $\Z$-actions, from which one can obtain results in the case of $\N$-actions from the previous proposition, when only surjective subsystems are considered. The case of $\N$-actions and non-surjective subsystems is dealt with in Section~\ref{sec:NActions}. First, we give our example of a universal (uncountable) quasiminimal subshifts, giving our first proof of Theorem~\ref{thm:IntroQuasiminimal}. In this case, we can construct a subshift whose halting problem along a clopen set is undecidable, and thus so is the model-checking problem for local languages, piecewise testable languages, starfree languages and renewal languages. Our example has only one proper subshift, which can be chosen rather freely.

\begin{theorem}
\label{thm:TransitiveLocallyTestableLanguages}
For any recursive infinite minimal subshift $Y$, there exists a transitive uncountable quasiminimal subshift $X$ for which the halting problem along a clopen set is $\Sigma^0_1$-complete, such that the only nontrivial subsystem of $X$ is $Y$.
\end{theorem}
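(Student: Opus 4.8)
The plan is to build $X$ as the orbit closure of a countable family of points encoding Turing machine computations, glued onto the minimal subshift $Y$ in the same spirit as Example~\ref{ex:UncountableSubstitution} and Proposition~\ref{prop:OneMinimal}, but using the ruler sequence to force transitivity and to make $Y$ the \emph{unique} proper subsystem. Concretely, I would introduce a fresh symbol, say $\#$, outside the alphabet of $Y$, fix the computable point $\psi \in \N^\Z$ in the orbit closure of the ruler sequence from Section~\ref{sec:Ruler}, and replace each maximal block of a fixed symbol (or each $0$) of $\psi$ by a long word of $Y$, with the lengths of these blocks encoding, via the exponents $2^{\phi_i}$, the successive configurations of the $i$th Turing machine $T_i$. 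The halting of $T_i$ is recorded by inserting a single marked block whose length differs from the pattern dictated by the ruler sequence exactly when $T_i$ halts. Since $\psi$ is recurrent and has unique singularities, the word-growth argument of Example~\ref{ex:UncountableSubstitution} shows that any point of $X$ containing the symbol $\#$ must ``fill up'' to a point whose $\#$-positions are governed by a point in the orbit closure of the ruler sequence, and hence be doubly transitive; this gives transitivity and shows the only proper subsystem is $Y = X \cap (S_Y)^\Z$, where $S_Y$ is the alphabet of $Y$.

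The key steps, in order, are: (1) fix the coding so that from a word $w$ one can decide, using $Y$'s recursiveness and Lemma~\ref{lem:RegularIntersection}-style reasoning plus the combinatorial description of ruler-sequence subwords in Section~\ref{sec:Ruler}, whether $w \in \B(X)$ — this establishes that $X$ is recursive; (2) verify that $X$ is a subshift, i.e. closed and shift-invariant, which is automatic for an orbit closure of a family of points over a finite alphabet; (3) prove the structural claim that every $x \in X$ either lies in $Y$ or is doubly transitive, by running the ``deduce the next $\#$ from the left or the right'' argument of Example~\ref{ex:UncountableSubstitution} using unique singularities of the ruler sequence, and noting that the $Y$-blocks between consecutive $\#$'s, being arbitrarily long subwords of a minimal (hence uniformly recurrent) subshift, contribute all of $\B(Y)$; (4) conclude quasiminimality (indeed: exactly one proper nontrivial subsystem, namely $Y$) and transitivity; (5) reduce the halting problem of $T_i$ to the halting problem along a clopen set for $X$ by taking $[C]$ to mark the start of the $i$th computation block, $[D]$ to mark the special ``halt'' block, and $[E]$ to be the clopen set that stays inside the region coding $T_i$'s run, so that a path from $[C]$ to $[D]$ through $[E]$ exists iff $T_i$ halts; and (6) observe that this problem is in $\Sigma^0_1$ since $X$ is recursive, giving $\Sigma^0_1$-completeness.

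The main obstacle I expect is step (3) together with the simultaneous demand of step (1): the coding must be rich enough that halting of $T_i$ is detectable as a bounded local ``disturbance'' (so that $X$ stays recursive and every point has only finitely many — here at most a controlled number of — singularities, keeping it close to a countable sofic/SFT picture), yet the filler blocks taken from $Y$ must be long enough and positioned by the ruler sequence so that the transitivity deduction goes through and no new subsystem other than $Y$ is created. In particular I must ensure that a point of $X$ cannot ``get stuck'' having only finitely many $\#$'s on one side without this forcing it into $\overline{\mathcal{O}(\text{ruler-type})}$ and thence into a point generating all of $X$; this is exactly the delicate case analysis at the end of Example~\ref{ex:UncountableSubstitution}, now complicated by the presence of the $Y$-blocks and the halting markers. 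The uniform recurrence of $Y$ is what lets the $Y$-blocks between markers realize every pattern of $\B(Y)$, but one has to check that these blocks can be chosen recursively of the prescribed lengths, which follows from $Y$ being a recursive infinite minimal subshift (so arbitrarily long legal words exist and are enumerable).

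I would also record, as in Proposition~\ref{prop:OneMinimal}, the accompanying remarks: that because the halting-along-a-clopen-set problem reduces to model-checking for local languages, it follows that model-checking is $\Sigma^0_1$-complete for local languages, piecewise testable languages, starfree languages, and renewal languages, by the containments established in Section~\ref{sec:Intro}. The formal write-up in Section~\ref{sec:Universal} would present the construction as a lemma (the ``Construction'' paragraph), prove the three itemized properties (recursive; transitive quasiminimal with unique proper subsystem $Y$; $\Sigma^0_1$-complete halting along a clopen set) as separate claims, and then assemble Theorem~\ref{thm:TransitiveLocallyTestableLanguages}.
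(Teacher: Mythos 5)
Your overall architecture matches the paper's: a fresh separator symbol, blocks drawn from $\B(Y)$, positions governed by a uniformly recurrent sequence with unique singularities (the ruler sequence), and the Example~\ref{ex:UncountableSubstitution}-style argument that any point containing the separator is doubly transitive, so that $Y$ is the only proper nontrivial subsystem. That part is sound and is exactly what the paper isolates as Lemma~\ref{lem:Construction}. But there is a genuine gap at the heart of your step (5): you propose to encode the identity of the machine $T_i$ and its halting in the \emph{lengths} of blocks (``via the exponents $2^{\phi_i}$'') together with ``a single marked block''. The halting problem along a clopen set places no constraint on the time $j$, so information carried only by block lengths is invisible to it; and a ``marked'' halt block detectable by a single clopen set $[D]$ would only witness that \emph{some} machine halts. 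For a many-one reduction you need, computably in $i$, clopen sets $[C_i]$ and $[D_i]$ that address machine $i$ specifically — i.e., infinitely many pairwise locally distinguishable ``names'' — and these names must consist of words of $Y$ (any new symbols used for addressing would have to be re-analysed in the quasiminimality argument, and finitely many symbols cannot name infinitely many machines by bounded-length cylinders anyway).

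The paper resolves exactly this with Lemma~\ref{lem:PrefixCode}: since an infinite minimal subshift is left- and right-perfect, one can recursively enumerate an infinite \emph{prefix code} $u_1,u_2,\ldots$ and an infinite \emph{suffix code} $v_1,v_2,\ldots$ of words of $Y$. Block $i$ is then $0\,u_{h(i)}\,w_i\,v_{h(i)}$ if $T_{h(i)}$ has not halted within $i$ steps and $0\,u_{h(i)}\,w_i\,v_{h(i)+1}$ if it has (with $h$ an infinite-to-one dovetailing), and the reduction for $T_j$ takes $[C]$ determined by $0u_j$, $[D]$ by $v_{j+1}0$, and $[E]$ the complement of $[0]$; the prefix/suffix code property is what makes ``starts with $u_j$'' and ``ends with $v_{j+1}$'' meaningful as cylinder conditions. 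Without this device (or an equivalent one), your reduction does not go through, and this is the one essential idea your proposal is missing; the remaining steps (recursiveness of $X$, the transitivity/quasiminimality analysis, and membership of the problem in $\Sigma^0_1$) are as you describe.
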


To prove Theorem~\ref{thm:TransitiveLocallyTestableLanguages}, we need a few lemmas. The first is a method of constructing quasiminimal subshifts.

\begin{lemma}
\label{lem:Construction}
Let $Y \subset S^\Z$ be a quasiminimal subshift. Let $\phi \in \N^\Z$ be uniformly recurrent with unique singularities, and let $\tau : \N \to a\B(Y)$ be a substitution where $|\tau(n)| \overset{n \rightarrow \infty}{\longrightarrow} \infty$, where $a$ is a symbol not in $S$. Then $X = \OC{\tau(\phi)} \subset (S \cup \{a\})^\Z$ is transitive and quasiminimal, and in fact every proper subsystem of $X$ is a subsystem of $Y$.
\end{lemma}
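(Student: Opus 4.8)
The plan is to analyze which points can lie in a proper subsystem $Z \subsetneq X$, and show that such a point must avoid the symbol $a$ (equivalently, must lie in the image $\tau(\infty)$-less part), hence be a point of $Y$. The key combinatorial input is that $\phi$ has unique singularities: each point of $\overline{\mathcal{O}(\phi)}$ in $\M^\Z$ contains at most one $\infty$. First I would observe that since $\tau(n) \in a\B(Y)$, every occurrence of the symbol $a$ in $\tau(\phi)$ (and in any point of $X$) marks the left end of a block $\tau(\phi_i)$, and the lengths $|\tau(\phi_i)|$ grow with $\phi_i$. So a point $x \in X$ either (i) contains no $a$, in which case $x$ is a limit of longer and longer infixes of single blocks $\tau(n)$, hence $x \in Y$ (using $\tau(n) \in a\B(Y)$, the part after the leading $a$ is in $\B(Y)$, and $Y$ is a subshift); or (ii) contains at least one $a$, and I claim any such $x$ is weakly transitive, in fact generates all of $X$.

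The heart of the argument is case (ii), and this mirrors the deduction in Example~\ref{ex:UncountableSubstitution}. Suppose $x_0 = a$, so $x_{[0,\ell)} = \tau(\phi_i)$ for the relevant block. For any $k$, the central pattern $x_{[-k,k]}$ occurs in $\tau(\phi)$, hence is covered by finitely many consecutive blocks $\tau(\phi_{j})\cdots\tau(\phi_{j'})$ whose indices form a window of $\phi$. Because $\phi$ is uniformly recurrent with unique singularities, I can use the same reasoning as in the ruler-sequence example: one cannot have all occurrences of $a$ within a bounded window, because the window of $\phi$-symbols around any fixed position, once it is long enough, is forced (by uniform recurrence) to contain a symbol of $\phi$ as large as we like, producing a long block $\tau(n)$, and the unique singularities property forbids two large symbols of $\phi$ from being too close, which pins down the structure and forces new occurrences of $a$ arbitrarily far to the left or right. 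Iterating, either we fill $x$ on both sides and $x$ corresponds to a recoding (via $\tau$) of a point of $\overline{\mathcal{O}(\phi)}$ with exactly the right block structure, which is weakly transitive in $X$; or only one side is filled, and $x$ is in the orbit closure of a one-sided-infinite arrangement of blocks which still generates $X$ (as in the example). In all sub-cases $\B(x) = \B(X)$, so the only proper subshift containing such an $x$ would be $X$ itself — hence no proper subsystem contains $a$.

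Therefore every proper subsystem $Z \subsetneq X$ consists of points avoiding $a$, so $Z \subseteq Y$ by case (i); conversely $Y$ is a subsystem (e.g. as the orbit closure of any block tail, or directly: $\B(Y) \subseteq \B(X)$ since $\tau(n)$'s tail lies in $\B(Y)$ and conversely every word of $Y$ appears in some $\tau(n)$ as $n \to \infty$ — I should double-check this last direction, but it follows because $|\tau(n)|\to\infty$ and $\tau(n) \in a\B(Y)$ means arbitrarily long words of $\B(Y)$ are realized, and $Y$ minimal... actually I only need $Y \subseteq X$, i.e. $\B(Y) \subseteq \B(X)$, which may require that $\tau$'s images collectively cover $\B(Y)$; if not, replace $Y$ by the subshift they generate, but the statement as phrased presumably intends $\tau$ surjective enough, or one simply notes the proper subsystems are \emph{subsystems of} $Y$ as stated, not necessarily $Y$ itself). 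Since $Y$ is quasiminimal it has finitely many subshifts, so $X$ has finitely many proper subshifts and is quasiminimal. Transitivity of $X$ follows since $\tau(\phi)$ (or the two-sided point obtained as in the ruler construction) is a weakly transitive point, and in the relevant cases a doubly transitive one, built from the doubly transitive / recurrent point $\psi \in \overline{\mathcal{O}(\phi)} \cap \N^\Z$. The main obstacle I anticipate is making the "unique singularities forces new $a$'s arbitrarily far out" deduction fully rigorous in this abstract setting rather than for the concrete ruler sequence: one must argue carefully, via a compactness argument over $\overline{\mathcal{O}(\phi)} \subset \M^\Z$, that the block decomposition of any $x \in X$ is unambiguous wherever $a$ occurs (since $a \notin S$ marks block starts), and that uniform recurrence then propagates occurrences of $a$ through $x$ exactly as in the example — essentially transporting the example's argument through the recoding $\tau$.
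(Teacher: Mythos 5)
Your proposal is correct and follows essentially the same route as the paper's proof: points avoiding $a$ lie in $Y$, and a compactness argument lifting the block decomposition to $\overline{\mathcal{O}(\phi)} \subset \M^\Z$ shows, via uniform recurrence (to recover all patterns when $a$ occurs infinitely often in at least one tail) and unique singularities (to rule out finitely many occurrences of $a$), that any point containing $a$ generates all of $X$. Your worry about whether $Y \subseteq X$ is immaterial, exactly as you concluded, since the lemma only asserts that proper subsystems of $X$ are subsystems of $Y$.
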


\begin{proof}
If $Z$ is a subshift of $X$ where $a$ does not occur, then clearly it is also a subshift of $Y$. Thus, we only need to show that there are finitely many subshifts of $X$ where $a$ does occur. In fact, we show the stronger fact that $a \in x \implies \OC{x} = X$. For this, suppose that $a \in x$.

If $a$ occurs infinitely many times in both tails of $x$ (that is, $\{i \in \Z \;|\; x_i = a\}$ is unbounded from both above and below), then by $|\tau(n)| \overset{n \rightarrow \infty}{\longrightarrow} \infty$ and a compactness argument, we have $x \in \mathcal{O}(\tau(\phi'))$ for some $\phi' \in \OC{\phi} \cap \N^\Z$. Namely, since $x \in \OC{\tau(\phi)}$, for any $j, j'$ such that $x_j = x_{j'} = a$, we find $i, i'$ such that $x_{[j, j']} = \tau(\phi_{[i,i']})a$. There are finitely many choices for the word $\phi_{[i,i']}$ for each pair $j, j'$, so letting $j \rightarrow -\infty$ and $j' \rightarrow \infty$ and passing to a suitable subsequence, we obtain $x \in \mathcal{O}(\tau(\phi'))$ for some $\phi' \in \OC{\phi}$.

Because $\phi$ is uniformly recurrent, $\phi'$ contains all its finite patterns, and thus $\OC{x}$ contains all finite patterns of $X$, which implies $\OC{x} = X$.

Next, suppose $a$ occurs infinitely many times in one tail of $x$, but not the other. These cases are (more or less) symmetric, so we suppose $x_0 = a$, $x_i \neq a$ for all $i < 0$, and $\{i \in \N \;|\; x_i = a\}$ is unbounded from above. Now, a compactness argument like the one above shows that there exists a point $\phi' \in \OC{\phi}$ such that $\phi'_{-1} = \infty$, $x_{\N} = \tau(\phi'_{\N})$, and $x_{(-\infty,-1]}$ is some limit point of the suffices of words $\tau(n)$. Because $\phi$ uniformly recurrent, all its finite words appear in the one-way point $\phi'_{\N}$, and thus also in $x$. Again, $\OC{x} = X$.

Finally, we show that $a$ cannot occur finitely many times. Namely, suppose $x_j = a, x_\ell = a$, and $x_i \neq a$ for all $i \notin [j, \ell]$. Then a compactness argument shows that there is a point $\phi' \in \OC{\phi}$ with $|\phi'|_\infty \geq 2$. This is a contradiction, since $\phi$ was assumed to have unique singularities.

The transitivity of $X$ is clear from the definition. 
\end{proof}

\begin{definition}
We say a subshift $X \subset S^\Z$ is \emph{right-perfect} if the subshift $c(X) = \{y \;|\; \exists x: x.y \in X\}$ is perfect.
\end{definition}

In other words, $X$ is right-perfect if every word of $X$ has at least two incomparable extensions to the right.

\begin{lemma}
\label{lem:PrefixCode}
Let $Y \in [0, k-1]^\Z$ be a nonempty right-perfect subshift with a recursive language. Then, there exists a recursively enumerable infinite prefix code $u_1, u_2, \ldots$ of words in $Y$ such that $(u_j)_{[0,|u_j|-2]} = (u_{j+k})_{[0,|u_j|-2]}$ for all $j, k \geq 1$.
\end{lemma}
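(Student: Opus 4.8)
The plan is to first give an effective, combinatorial reformulation of right-perfectness, and then build the prefix code by a ``comb'' construction that drags a single infinite ray through the one-sided subshift $c(Y)$. (Note that the $k$ appearing in the displayed identity is a dummy universally quantified variable, not the alphabet size.)

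\textbf{Step 1: branching extensions, effectively.} I claim that $Y$ right-perfect implies: every word $w \in \B(Y)$ has a right-extension $wt \in \B(Y)$ (for some $t \in [0,k-1]^*$, possibly empty) admitting two distinct one-symbol right-extensions $wtc, wtc' \in \B(Y)$ with $c \neq c'$. Indeed, if some $w \in \B(Y)$ had no such extension, then starting from $w$ every right-extension would be forced to continue in a unique way — each word of $\B(Y)$ has at least one one-symbol right-extension since $\B(Y)$ is extendable — producing a unique right-infinite word $y$ all of whose prefixes lie in $\B(Y)$, hence $y \in c(Y)$ and $y$ is the unique point of $c(Y)$ with prefix $w$; thus $c(Y)$ would have an isolated point, contradicting right-perfectness. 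Moreover, since $\B(Y)$ is recursive, such $wt$ together with the branching symbols $c \neq c'$ can be found effectively, by searching over $t$ in order of length and using decidability of membership in $\B(Y)$.

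\textbf{Step 2: the comb construction.} Fix some word $r_1 \in \B(Y)$ (e.g.\ a one-letter word, using $Y \neq \emptyset$). Iteratively, given a current word $r_j$, apply Step~1 to obtain a right-extension $p_j \in \B(Y)$ of $r_j$ and distinct symbols $a_j \neq b_j$ with $p_j a_j, p_j b_j \in \B(Y)$; set $u_j := p_j a_j$, $q_j := p_j b_j$, and continue from $r_{j+1} := q_j$. Since each $p_{j+1}$ extends $q_j = p_j b_j$, which extends $p_j$, the words $p_1, p_2, \ldots$ form a strictly increasing chain of prefixes, and for every $j$ and every $m > j$ the symbol of $p_m$ at position $|p_j|$ equals $b_j \neq a_j$.

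\textbf{Step 3: verification.} The construction is a single algorithm (using recursiveness of $\B(Y)$ and Step~1), so $u_1, u_2, \ldots$ is recursively enumerable, clearly infinite, with each $u_j \in \B(Y)$. For the prefix-code property, let $i < j$; then $p_j$ extends $p_i b_i$, so $(p_j)_{|p_i|} = b_i \neq a_i = (u_i)_{|p_i|}$, whence $u_i$ is not a prefix of $u_j$, and $|u_j| = |p_j| + 1 > |p_i| + 1 = |u_i|$ rules out $u_j$ being a prefix of $u_i$. Finally, $(u_j)_{[0,|u_j|-2]} = p_j$, while for $k \geq 1$ the word $(u_{j+k})_{[0,|u_j|-2]}$ is the length-$|p_j|$ prefix of $u_{j+k} = p_{j+k} a_{j+k}$, which (as $|p_j| < |p_{j+k}|$) is the length-$|p_j|$ prefix of $p_{j+k}$, and this equals $p_j$ since $p_{j+k}$ extends $p_j$; hence the required identity holds. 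The main point, and the only place needing thought, is Step~1 — recognizing that right-perfectness yields branching extensions and that these can be located effectively; the comb construction and its verification are then routine bookkeeping.
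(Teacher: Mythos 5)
Your proof is correct and is essentially the paper's own argument: both build the code by following a single growing ``spine'' through the right-extensions of $Y$, at each stage locating (effectively, via recursiveness of $\B(Y)$ together with right-perfectness) a branching extension, emitting one branch as $u_j$ and continuing the spine along the other, which simultaneously gives the prefix-code and compatible-prefix properties. The paper phrases the same construction via restricting an enumeration of $\B(Y)$ to subsequences, but the content is identical; your Step~1 just spells out the reformulation of right-perfectness that the paper states immediately after its definition.
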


Of course, symmetrically, there exists such a suffix code if $Y$ is left-perfect.

\begin{proof}
Note that a right-perfect nonempty subshift is infinite (even uncountable). Enumerate the words of $Y$ as $V_1 = v_1, v_2, \ldots$, first ordered by length, and then lexicographically among words of each length. Let $u_1 = ua$ be the first one-symbol extension of a word of $Y$ on this list which has at least two one-symbol extensions, $ua$ and $ub$. Let $V_2$ be the subsequence of $V_1$ of words beginning with $ub$. Having chosen $u_1, \ldots, u_j$ and restricted our list to $V_{j+1}$, choose again the lexicographically smallest one-symbol extension of a word of $Y$ which has at least two one-symbol extensions in $V_{j+1}$, and restrict to $V_{j+2}$ accordingly. Since $Y$ is right-perfect, this process continues forever, and the resulting set of words is clearly a prefix code. The condition on compatible prefixes is automatic in the construction. 
\end{proof}

\begin{proof}[Proof of Theorem~\ref{thm:TransitiveLocallyTestableLanguages}]
Suppose $Y \subset [1, k]^\Z$. The subshift $X$ will be over the alphabet $[0, k]$.

Enumerate the deterministic Turing machines as $T_0, T_1, T_2, \ldots$. Let $\phi \in \N^\Z$ be a computable point in the orbit closure of the ruler sequence.

Using Lemma~\ref{lem:PrefixCode} and the fact that an infinite minimal subshift is left- and right-perfect, take recursively enumerable prefix and suffix codes of words $u_i$ and $v_i$ of $Y$, respectively. Let $h : \N \to \N$ be a computable infinite-to-one dovetailing of the natural numbers (for example, the ruler sequence). Now, let $x = \tau(\phi)$, where $\tau$ is the substitution
\[ i \mapsto \begin{array}{ll}
0 u_{h(i)} w_i v_{h(i)}, & \mbox{if $T_{h(i)}$ does not halt before step $i$.} \\
0 u_{h(i)} w_i v_{h(i)+1}, & \mbox{if $T_{h(i)}$ halts before step $i$.} \\
\end{array} \]
where each $w_i$ is a word of length at least $i$ such that $u_{h(i)} w_i v_{h(i)} \sqsubset Y$, chosen in such a way that $i \mapsto w_i$ is computable, and as $i$ runs over the natural numbers, all words of $Y$ beginning with $u_{h(i)}$ appear as prefixes of words $u_{h(i)} w_i$ infinitely many times (and similarly for $w_i v_{h(i)}$). Let $X = \overline{\mathcal{O}(x)} \subset [0, k]^\Z$.

Clearly, $X$ is transitive, and $Y$ is its subsystem. By Lemma~\ref{lem:Construction} it is quasiminimal and all nontrivial proper subsystems are subsystems of $Y$ -- thus equal to $Y$ by minimality. The halting problem along a clopen set is undecidable for $X$ because $T_j$ eventually halts if and only if $\B(X) \cap L \neq \emptyset$, where $L = 0u_j [1,k]^* v_{j+1}0$.

To show that the subshift is recursive, note that given any word $0u0$ where $u \in [1,k]^*$, we can easily check whether there exists $n \in \N$ such that $\tau(n) = 0u$. Suppose then that we are given a word $t_0 0 t_1 0 t_2 0 \cdots 0 t_k$, where $t_i \in [1,k]^*$. Using the properties of the sequence $\phi$, we can compute two extensions of this word, one beginning with $0$ and one ending in $0$, such that every extension agrees with one of them. Thus, we may assume the given word is $w = 0 t_0 0 t_1 0 t_2 0 \cdots 0 t_k$.

Such a word is in the language of $X$ if and only if $u = \tau^{-1}(0 t_0 0 t_1 0 t_2 0 \cdots 0 t_{k-1})$ is well-defined and $u \sqsubset \phi$ holds, and either $t_k$ begins with one of the words $u_{h(i)}$ or it is a prefix of the unique one-way limit $x \in [1, k]^\N$ of the words $u_i$. These conditions are easily seen to be decidable. 
\end{proof}

\subsection{Countable universal quasiminimal examples}

The case of countable quasiminimal subshifts is also interesting. For such subshifts, we show that both the modular halting problem and the counting problem are undecidable, so that the model-checking problems for piecewise testable languages, starfree languages and renewal languages are undecidable as well. In Section~\ref{sec:Decidability}, we complement these results by showing that in each, the number of subsystems is optimal, and the model-checking problem for local languages (and thus the halting problem along a clopen set) is decidable.

First, we show that if nonzero symbols are asymptotically spaced far apart in $x$, then $x$ generates a quasiminimal countable subshift.

\begin{lemma}
\label{lem:CountableConstruction}
Let $x \in (\{0\} \cup S)^\Z$ be such that
\[ \forall n: \exists m: (x_j = x_{j'} \neq 0 \wedge j \neq j' \wedge |j| \geq m \implies |j - j'| > n). \]
Then $X = \OC{x}$ is a countable weakly transitive quasiminimal subshift whose proper subshifts are among the subshifts of $\B^{-1}(0^*S0^*)$. If $x$ is computable and $m = m(n)$ is computable in $n$, then $X$ is recursive.
\end{lemma}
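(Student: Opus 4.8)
The plan is to verify, in turn, the four assertions: $X = \OC{x}$ is (i) countable, (ii) weakly transitive, (iii) quasiminimal with every proper subshift inside $\B^{-1}(0^*S0^*)$, and (iv) recursive under the stated effectivity hypotheses. Weak transitivity is immediate: $x$ itself is a weakly transitive point of $\OC{x}$ by definition of the orbit closure. For the rest, the key observation to extract from the hypothesis is that the ``spreading'' condition forces every $y \in X = \OC{x}$ to contain \emph{at most one} nonzero symbol. Indeed, if $y$ contained two nonzero symbols at coordinates $j \neq j'$, then the central pattern $y_{[j,j']}$ (say) would be a subword of $x$ occurring at arbitrarily large positions; but the hypothesis says that for $n = |j-j'|$, all sufficiently far-out occurrences of a nonzero symbol $x_i$ have their nearest nonzero neighbour at distance $> n$, so a window of width $|j-j'|$ containing two nonzero symbols can only occur boundedly far from the origin, a contradiction (here one uses a standard compactness argument: a point of $\OC{x}$ with two nonzero symbols a bounded distance apart yields such windows at arbitrarily large positions in $x$).

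Granting that, countability follows at once: every $y \in X$ is either $0^\Z$, or of the form $\INF 0 . 0^k s 0 \INF$ for some $s \in S$ and some shift, i.e. $X \subseteq \B^{-1}(0^*S0^*)$ up to the point $0^\Z$ — wait, more precisely $X \subseteq \B^{-1}(0^* S 0^*) \cup \{0^\Z\}$, and actually $0^\Z \in \B^{-1}(0^*S0^*)$ as well since $0^*S0^* \ni$ arbitrarily long runs of $0$ only if $S$ is nonempty; in any case this set is a countable sofic shift, so $X$ is countable. For quasiminimality: let $Z \subsetneq X$ be a proper subshift. Since $Z \neq X$ and $x$ is weakly transitive, $Z$ does not contain $x$, hence $Z$ omits some word $w \sqsubset x$; choosing $w$ to straddle nonzero symbols appropriately one sees $Z$ cannot contain a point with a nonzero symbol unless... — actually the cleaner route is: the subshifts of $\B^{-1}(0^*S0^*)$ are exactly $\B^{-1}(0^*)$ together with, for each subset $T \subseteq \{s \in S : s \sqsubset X\}$, the shift $\B^{-1}(0^* T 0^*)$ (one orbit per symbol that occurs in $X$, plus $0^\Z$), and there are only finitely many of these since $S$ is finite. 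Combined with the containment $X \subseteq \B^{-1}(0^*S0^*) \cup \{0^\Z\}$ and the fact that $X$ itself may or may not be one of these (it is, if $x$ has a nonzero symbol occurring infinitely often — then $X$ is a finite union of such orbits — and otherwise $X = \OC{x}$ already has this form), we conclude $X$ has finitely many subshifts, i.e. is quasiminimal, with every proper one a subshift of $\B^{-1}(0^*S0^*)$.

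For recursivity, assuming $x$ computable and $n \mapsto m(n)$ computable: given a word $w$, we must decide $w \sqsubset X$. If $w \in 0^*$ this is automatic. Otherwise $w$ contains a nonzero symbol; the interesting case is when $w = 0^a s 0^b$ with $s \in S$ (if $w$ contains two nonzero symbols then $w \not\sqsubset X$ by the one-singularity fact — but to make this a decision we check it using $m(\cdot)$, see below). To test $0^a s 0^b \sqsubset X$ we note $w \sqsubset X$ iff $w \sqsubset x$ (here is where weak transitivity of the \emph{generating} point enters: $\B(X) = \B(x)$), and $w \sqsubset x$ at position $i$ means $x_i = s$ and the $2\max(a,b)+1$ coordinates around $i$ outside position $i$ are $0$. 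By the spreading hypothesis with $n = 2\max(a,b)+1$, any such occurrence with $|i| \geq m(n)$ is already a witness, and the occurrences with $|i| < m(n)$ are finitely many and checkable since $x$ is computable — but this only shows \emph{some} occurrence exists among $|i| < m(n)$ OR that every sufficiently-far occurrence of $s$ works; the subtlety is that there might be \emph{no} occurrence of $s$ at all near the origin while $s$ does occur far out. This is handled by: $s \sqsubset x$ at all iff $s = x_i$ for some $|i| < m(1)$ wait — rather, if $s$ occurs at $|i| \geq m(1)$, then by $n=1$ its neighbours are $0$, so $0s0 \sqsubset x$; iterating, if $s$ occurs at $|i| \geq m(n)$ then $0^n s 0^n \sqsubset x$. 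So: $0^a s 0^b \sqsubset X$ iff either ($\exists\, |i| < m(2\max(a,b)+1)$ with $x_{[i-a,i+b]} = 0^a s 0^b$), decidable by computing finitely many coordinates of $x$, or ($s$ occurs in $x$ at some $|i| \ge m(2\max(a,b)+1)$), which holds iff $s$ occurs in $x$ at \emph{some} coordinate with $|i| \ge $ that bound, and that in turn we detect by... hmm, this last disjunct is the one needing care. The clean fix: $s \sqsubset x$ with $|i|\ge m(n)$ for a given $n$ is equivalent to $s \sqsubset x$ and $s$ occurs infinitely often OR $s$ occurs at some $|i| \in [m(n), M)$ — and ``$s$ occurs infinitely often in $x$'' need not be decidable a priori, so instead we observe: $0^N s 0^N \sqsubset X$ for \emph{all} $N$ iff $s$ occurs in $x$ at arbitrarily large $|i|$ or at some $|i|$ with the window fitting, and for a \emph{fixed} target $0^a s 0^b$ it suffices that $0^a s 0^b \sqsubset X$ iff $0^a s 0^b \sqsubset x$ within $|i| < m(2\max(a,b)+1)+ a + b$, because any occurrence further out, having $0$-neighbours by the hypothesis, can be ``slid'' — no: an occurrence at $|i| = m(n) + 10^{100}$ is not near the origin. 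The genuinely correct statement, which I will use, is that the set $\{s \in S : s \sqsubset X\}$ is decidable (run $x$ out to coordinate $m(1)$: every symbol of $X$ other than $0$ appears, with $0$-neighbours, somewhere, so already in $\B_3(X)$, hence as the middle of $0s0$; more carefully $s \sqsubset X$ iff $s \in \B_1(x_{[-m(1), m(1)]})$ fails in general — I will instead derive decidability of $\B(X)$ directly from the structural description $X \subseteq \B^{-1}(0^*S0^*)\cup\{0^\Z\}$ plus the fact that $\B(X) = \B(\OC{x})$ and, for words of the form $0^a s 0^b$, membership reduces to $0^{a'} s 0^{b'} \sqsubset x$ for $a' = \min(a, m(\cdot)),\dots$). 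The main obstacle in writing this up rigorously is exactly this last point — pinning down a computable bound beyond which ``$s$ occurs'' is equivalent to ``$0^a s 0^b \in \B(X)$'' — and the resolution is to note that $0^a s 0^b \in \B(X)$ iff $0^a s 0^b$ occurs in $x$ at some position $i$ with $|i| < m(2\max(a,b)+1) + a + b + 1$: any occurrence at larger $|i|$ has all-$0$ neighbourhood of radius $2\max(a,b)+1$ by hypothesis, hence the occurrence of $s$ alone at that far-out position already forces (taking progressively the window $n=2\max(a,b)+1$) the pattern $0^{2\max(a,b)+1} s 0^{2\max(a,b)+1} \sqsubset x$, which contains $0^a s 0^b$ — so if $0^a s 0^b$ occurs anywhere far out it occurs close to such a canonical far-out occurrence too; but "close to a far-out occurrence" is still far out. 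I will therefore present the recursivity argument via: $\B(X) \cap (0^* \cup 0^*S0^*)$ is the whole of $\B(X)$ minus words with $\ge 2$ nonzero symbols (a decidable condition given $m$, since such a word occurs in $x$ only at $|i| < m(\text{its width})$), and membership of $0^a s 0^b$ is equivalent to: [$s = x_i$ for some $|i| < m(1)$, and then the full window $0^a s 0^b$ occurs because $s$'s far-out occurrences have arbitrarily wide $0$-windows] — i.e. $0^a s 0^b \in \B(X) \iff \exists i,\ |i|<m(\max(2a+1,2b+1)),\ x_{[i-a,i+b]}=0^a s 0^b$, which is decidable; correctness of this equivalence is the crux and follows by the sliding/compactness argument above. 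I expect this bookkeeping to be the only real work; everything else is the one-nonzero-symbol dichotomy plus finiteness of $S$.
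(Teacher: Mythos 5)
Your overall strategy is the same as the paper's: show that every \emph{limit} point of $x$ has at most one nonzero symbol, deduce a structural containment for $X$, read off countability and quasiminimality, and then decide membership of a word $w$ by cases according to how many nonzero symbols $w$ contains. But your key observation is misstated: you claim every $y\in X$ has at most one nonzero symbol, and you write $X\subseteq \B^{-1}(0^*S0^*)\cup\{0^\Z\}$. This omits the orbit of $x$ itself, which lies in $X=\OC{x}$ and in general has infinitely many nonzero symbols; the claim as stated is refuted by $y=x$. Your compactness argument only applies to points that are limits of $\sigma^{i_k}(x)$ with $|i_k|\to\infty$, and the correct containment is $X\subseteq\mathcal{O}(x)\cup\B^{-1}(0^*S0^*)$ (as in the paper). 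Countability still follows since both pieces are countable, and quasiminimality follows because a proper subshift $Z\subsetneq X$ cannot meet $\mathcal{O}(x)$ (else $Z\supseteq\OC{x}=X$), hence is one of the finitely many subshifts of $\B^{-1}(0^*S0^*)$. This repair is routine, but it has to be made.

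The recursivity argument has a genuine gap, exactly at the point you yourself call ``the crux''. Your final criterion --- $0^a s 0^b\in\B(X)$ iff $0^as0^b$ occurs in $x$ at some position $|i|<m(\max(2a+1,2b+1))$ --- is false: if $s$ occurs in $x$ only at positions far beyond that bound (e.g.\ $x=\INF 0.0^{10^6}s0\INF$, for which the constant function $m\equiv 0$ witnesses the hypothesis), the right-hand side fails while the left-hand side holds, since a far-out occurrence of $s$ is automatically padded by zeros. No uniform computable bound of this kind exists, because whether a given $s$ occurs at all (or at arbitrarily large $|i|$) in a computable point is a $\Sigma^0_1$ question that is not decidable from programs for $x$ and $m$. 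The paper resolves this non-uniformly: since recursiveness only asserts the \emph{existence} of a decision algorithm for the fixed subshift $X$, one hard-codes a finite look-up table recording, for each $s\in S$, whether $\INF 0 s 0\INF\in X$ (equivalently, whether $s$ occurs at arbitrarily large $|i|$), and, when it does not, the finite bound $k_s=\max\{|i|\;:\;x_i=s\}$. With that advice, $0^as0^b\sqsubset X$ holds always when $\INF 0s0\INF\in X$, and otherwise iff $0^as0^b$ occurs in $x_{[-k_s-|w|,\,k_s+|w|]}$. Your treatment of words with two or more nonzero symbols (occurrences confined to $|i|<m(\cdot)+|w|$) and of words in $0^*$ is correct and matches the paper; the missing idea is the non-uniform advice, which you circle around (you explicitly note that ``$s$ occurs infinitely often'' need not be decidable) but replace with a bound that does not exist.
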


\begin{proof}
Every limit point of $x$ is easily seen to be in $\B^{-1}(0^*S0^*)$, so $X$ is contained in $\mathcal{O}(x) \cup \B^{-1}(0^*S0^*)$. Thus, $X$ is countable and quasiminimal, and its proper subshifts are among the subshifts of $\B^{-1}(0^*S0^*)$.

Suppose then that $x$ and $m$ are computable. For each ${^\infty 0} s 0^\infty \notin X$, let $k_s$ be the maximal $|i|$ such that $x_i = s$ (given  to the algorithm by a look-up table). Given $w$, if $w$ is not a word in $\B^{-1}(0^*S0^*)$, then $w$ contains two nonzero symbols spaced $n$ apart, so $w \sqsubset X$ if and only if $w$ occurs in $x_{[-m(n)-|w|, m(n)+|w|]}$. If $w \in 0^*$, or $w \in 0^*s0^*$ and ${^\infty 0} s 0^\infty \in X$, then $w \sqsubset X$. If $w \in 0^*s0^*$ and ${^\infty 0} s 0^\infty \notin X$, then $w \sqsubset X$ if and only if $w \sqsubset x_{[-k_s-|w|, k_s+|w|]}$. 
\end{proof}

We begin with the case of modular halting problem. We first show the result for the modular halting problem along a clopen set, as the proof illustrates the main idea, but is easier.

\begin{proposition}
\label{prop:CountableCodedLanguages}
There exists a recursive countable weakly transitive quasiminimal subshift $X \subset \{0,1\}^\Z$ for which the modular halting problem along a clopen set is $\Sigma^0_1$-complete, and which has exactly two subsystems $\B^{-1}(0^*10^*)$ and $\B^{-1}(0^*)$.
\end{proposition}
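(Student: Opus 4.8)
The plan is to build $X$ as an orbit closure $\OC{x}$ of a single computable point $x \in \{0,1\}^\Z$, using Lemma~\ref{lem:CountableConstruction} to guarantee countability, weak transitivity and quasiminimality, and using the spacing freedom in that lemma to encode halting times modulo a given period. Concretely, enumerate the deterministic Turing machines $T_0, T_1, T_2, \ldots$ and fix a computable infinite-to-one dovetailing $h : \N \to \N$ (e.g.\ the ruler sequence), so that each machine index is revisited infinitely often with unbounded ``time budget''. Let $\phi \in \N^\Z$ be a computable point in the orbit closure of the ruler sequence (as constructed in Section~\ref{sec:Ruler}), and define $x = \tau(\phi)$ for a substitution $\tau : \N \to 1\{0\}^*$ with $|\tau(n)| \to \infty$; the key point is that the \emph{length} of the block $\tau(n)$ should carry the computation. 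Precisely, set $\tau(n) = 1\,0^{g(n)}$ where $g(n)$ is chosen so that, reading $i = h(n)$ and the ``budget'' encoded in $n$, the length $|\tau(n)| = 1+g(n)$ is congruent to a fixed residue modulo a fixed modulus exactly when $T_i$ has halted within the allotted number of steps (and to a different residue otherwise). Since $\phi$ is uniformly recurrent with unique singularities, each nonzero symbol of $x$ is flanked by arbitrarily long runs of $0$s as one moves outward, so the hypothesis of Lemma~\ref{lem:CountableConstruction} holds with a computable $m(n)$, giving that $X = \OC{x}$ is a recursive countable weakly transitive quasiminimal subshift whose only proper subshifts are among those of $\B^{-1}(0^*10^*)$.

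Next I would pin down exactly which subsystems occur. The point $x$ contains the symbol $1$ infinitely often in both tails, so $\INF 0 1 0 \INF \in X$ and hence $\B^{-1}(0^*10^*) \subset X$; also $0^\Z \in X$. By Lemma~\ref{lem:CountableConstruction} every proper subshift is a subshift of $\B^{-1}(0^*10^*)$, and the only subshifts of the latter are $\emptyset$, $\B^{-1}(0^*)$, $\B^{-1}(0^*10^*)$ itself, and (potentially) ones of the form $\B^{-1}(0^*10^*)$ restricted to finitely many ``gap patterns'' — but since $x$ realizes all finite gaps between consecutive $1$s (because $\phi$ is recurrent and the $g(n)$ realize all residues, in fact one arranges the construction so that arbitrarily long runs $0^N$ appear between two $1$s and also $\cdots010\cdots$ appears), one checks that the only subshifts strictly between $\emptyset$ and $X$ are exactly $\B^{-1}(0^*10^*)$ and $\B^{-1}(0^*)$. (A cleaner way: $X$ contains $\INF 0 1 0^N 1 0 \INF$ for all $N$ and contains $\INF (0) 1 0 1 0 \INF$-type points as limits, forcing any subshift containing a point with $1$ in it to contain $\INF 010\INF$, hence to be $\B^{-1}(0^*10^*)$ or $X$; and a subshift with no $1$ is $\emptyset$ or $\B^{-1}(0^*)$.) This gives the ``exactly two subsystems'' claim.

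For the $\Sigma^0_1$-completeness of the modular halting problem along a clopen set, membership in $\Sigma^0_1$ is immediate from recursiveness of $X$: one searches over points by enumerating longer and longer legal central patterns. For hardness, given a machine $T_j$, I would exhibit clopen sets $[C], [D], [E]$ and a residue $k$ and modulus $m$ such that a witnessing point/offset exists in $X$ iff $T_j$ halts. Take $[C] = [1]_0$, $[D] = [1]_0$, $[E] = [0]_0$ (or suitable small modifications), so that a valid instance asks for a point $y \in X$ and $\ell \in \N$ with $\ell \equiv k \bmod m$, $y_0 = 1$, $y_\ell = 1$, and $y_i = 0$ for $0 < i < \ell$ — i.e.\ a run $1 0^{\ell-1} 1$ in a point of $X$ with $\ell \equiv k \bmod m$. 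By construction the runs $1 0^{\ell-1} 1$ occurring in $x$ (hence in $X$, since those with large index are exactly the ones surviving in limit points of $0^*10^*$-type, and those with bounded index are explicitly listed) have $\ell = |\tau(n)|$ for the relevant $n$, which lands in the ``halting'' residue class mod $m$ iff the corresponding machine $T_{h(n)}$ halts within the budget encoded by $n$; since $h$ is infinite-to-one and budgets are unbounded, such an $n$ exists iff $T_j$ halts. One must be slightly careful that limit points of $x$ (the points of $\B^{-1}(0^*10^*)$) do not introduce spurious witnesses — but a two-sided infinite gap is not of the form $1 0^{\ell-1} 1$ for finite $\ell$, and a point $\INF 0 1 0^\ell 1 0\INF \in X$ only for those finite $\ell$ already realized in $x$, so no spurious witnesses arise.

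The main obstacle I expect is the bookkeeping in defining $g(n)$ (equivalently $\tau$) so that three things hold \emph{simultaneously}: (i) $|\tau(n)| \to \infty$ and $i \mapsto \tau(i)$ is computable with a computable modulus function $m(n)$ for Lemma~\ref{lem:CountableConstruction}; (ii) the length $|\tau(n)| \bmod m$ faithfully records ``$T_{h(n)}$ halted by step (budget of $n$)'' and does so in a way that is \emph{correctly decidable} so that $X$ is recursive (this is where one must ensure that, given a finite word $w = 0 t_0 0 t_1 \cdots$, one can test membership $w \sqsubset X$ — this mirrors the end of the proof of Theorem~\ref{thm:TransitiveLocallyTestableLanguages}, checking $\tau^{-1}$ of the interior blocks lies along $\phi$ and the boundary blocks are prefixes/suffixes of blocks of $x$); and (iii) the gap structure between consecutive $1$s is rich enough to force the subsystem lattice to be exactly $\{\emptyset, \B^{-1}(0^*), \B^{-1}(0^*10^*), X\}$. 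The interplay of (ii) with recursiveness is the delicate part — one wants ``halts'' to push the length into one residue and ``not yet'' into another, while never being ambiguous — and I would handle it exactly as in Theorem~\ref{thm:TransitiveLocallyTestableLanguages}: use a ``halts before step $i$'' vs.\ ``does not halt before step $i$'' dichotomy indexed by $i=n$, with $h$ dovetailing, so that the decision at each $n$ is a finite bounded computation.
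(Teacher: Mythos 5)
There is a genuine gap at the very first step of your construction. You substitute $\tau$ into a point $\phi$ of the orbit closure of the ruler sequence and claim that ``each nonzero symbol of $x$ is flanked by arbitrarily long runs of $0$s as one moves outward,'' so that Lemma~\ref{lem:CountableConstruction} applies. This is false: the ruler sequence has $\phi_i=0$ at every other position, so $\tau(\phi)$ contains the fixed short block $\tau(0)$ with bounded gaps, and hence there are pairs of $1$s at bounded distance arbitrarily far from the origin; the spacing hypothesis of Lemma~\ref{lem:CountableConstruction} is violated. What $\OC{\tau(\phi)}$ actually is, for uniformly recurrent $\phi$ with unique singularities, is the \emph{uncountable} transitive subshift of Lemma~\ref{lem:Construction} (cf.\ Example~\ref{ex:UncountableSubstitution}); moreover, since unique singularities forbid two consecutive long blocks, the point $\INF 0 1 0 \INF$ does not even belong to it, so its only nontrivial proper subsystem is $\B^{-1}(0^*)$ rather than the two required. (Relatedly, your second paragraph's inference ``$1$ occurs infinitely often in both tails, so $\INF 0 1 0 \INF \in X$'' and the claim $\INF 0 1 0^N 1 0 \INF \in X$ are unjustified.) The ruler sequence should enter only as the dovetailer $h$; the base point must be the non-recurrent one $x = \INF 0 . \tau(0)\tau(1)\tau(2)\cdots$ with $|\tau(i)|\to\infty$, so that gaps between consecutive $1$s tend to infinity, Lemma~\ref{lem:CountableConstruction} applies with a computable $m(n)$, and the limit points are exactly $0^\Z$ and the shifts of $\INF 0 1 0 \INF$. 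This is what the paper does.

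A secondary issue is the reduction: you ask that $|\tau(n)|$ be ``congruent to a fixed residue modulo a fixed modulus exactly when $T_{h(n)}$ has halted.'' With one modulus shared by all machines you can only detect that \emph{some} machine halted, not that $T_j$ did; the modulus must depend on $j$ (it is part of the input to the modular halting problem). The paper sets $\tau(i) = 1\,0^{2^i}$ if $T_{h(i)}$ has not halted by step $i$ and $\tau(i) = 1\,0^{p_{h(i)}2^i}$ if it has, where $p_j$ is the $j$th odd prime; then a gap between consecutive $1$s is divisible by $p_j$ iff $T_j$ eventually halts, giving the instance $[C]=[D]=[1]$, $[E]=[0]$, residue $0$ modulo $p_j$. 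With these two repairs your outline coincides with the paper's proof.
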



\begin{proof}
Enumerate the deterministic Turing machines as $T_0, T_1, T_2, \ldots$. Let $h : \N \to \N$ be the ruler sequence and let $p_i$ denote the $i$th odd prime number (so $p_1 = 3$). Let $\tau$ be the substitution
\[ i \mapsto \begin{array}{ll}
1 0^{2^i}, & \mbox{if $T_{h(i)}$ does not halt before step $i$.} \\
1 0^{p_{h(i)} 2^i}, & \mbox{if $T_{h(i)}$ halts before step $i$.} \\
\end{array} \]
We let $x = {^\infty} 0.\tau(0123...)$ and $X = \OC{x}$.

The modular halting problem of this subshift is clearly undecidable, as the distance of two symbols $1$ along symbols $0$ can be divisible by $p_j$ if and only if $T_j$ eventually halts. The required properties of $X$ follow from Lemma~\ref{lem:CountableConstruction}. 
\end{proof}

\begin{theorem}
\label{thm:CountableCodedLanguages}
There exists a recursive countable weakly transitive quasiminimal subshift $X \subset \{0,1\}^\Z$ for which the modular halting problem is $\Sigma^0_1$-complete, and which has exactly two subsystems $\B^{-1}(0^*10^*)$ and $\B^{-1}(0^*)$.
\end{theorem}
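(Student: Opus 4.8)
The plan is to build, as in Proposition~\ref{prop:CountableCodedLanguages}, a point $x = \INF 0 . \tau(0123\cdots) \in \{0,1\}^\Z$ with $\tau(i) = 1\,0^{\ell_i-1}$ and to take $X = \OC{x}$, but to choose the block lengths $\ell_i$ so that reachability between two symbols $1$ modulo a fixed number encodes halting even without restricting the path to a clopen set. The difficulty, compared to the ``along a clopen set'' version, is that now a witnessing shift is an \emph{arbitrary} sum $\ell_a + \ell_{a+1} + \cdots + \ell_{b-1}$ of consecutive block lengths, and such sums are hard to control modulo a fixed prime. I would defeat this by making every block length divisible by more and more primes, via the Chinese Remainder Theorem, so that modulo the prime attached to machine $T_j$ only the first few blocks and the blocks belonging to $T_j$ itself contribute anything.

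Concretely: let $h : \N \to \N$ be the ruler sequence (so $h(i) \le i$, $h$ is computable, and $h$ takes each value infinitely often), and let $q_0 < q_1 < q_2 < \cdots$ be a computable increasing sequence of primes with $q_j > j^2+2$. I declare block $\tau(i)$ to ``belong to'' machine $T_{h(i)}$ and to ``simulate it for $i$ steps''; crucially, whether $T_{h(i)}$ halts within $i$ steps is decidable, so everything below is computable. Define residues $k_{q_j} \in \{1, \dots, q_j - 1\}$ and lengths $\ell_i \ge i$ by a single recursion: given $\ell_0, \dots, \ell_{j-1}$, let $k_{q_j}$ be the least residue in $\{1, \dots, q_j-1\}$ not congruent modulo $q_j$ to any sum $\ell_a + \cdots + \ell_c$ with $0 \le a \le c < j$ (there are only $\binom{j+1}{2} < q_j - 1$ such sums, so $k_{q_j}$ exists); and for each $i$, let $\ell_i$ be any computable integer $\ge i$ satisfying $\ell_i \equiv 0 \bmod q_j$ for every $j \le i$, \emph{except} that $\ell_i \equiv k_{q_{h(i)}} \bmod q_{h(i)}$ when $T_{h(i)}$ halts within $i$ steps (this is a consistent finite congruence system whose solutions form a nonempty arithmetic progression, and there is no circularity, since $\ell_i$ for $i < j$ only refers to $k_{q_{h(i)}}$ with $h(i) \le i < j$). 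Set $x = \INF 0 . \tau(0123\cdots)$ and $X = \OC{x}$.

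For $\Sigma^0_1$-hardness I reduce the halting problem of $T_j$ to the modular halting instance with $[C] = [D] = [1]$, $m = q_j$, $k = k_{q_j}$. If $T_j$ halts, say within $N$ steps, choose $i \ge \max(N, j)$ with $h(i) = j$ (possible since $h$ hits $j$ infinitely often); then $\ell_i \equiv k_{q_j} \bmod q_j$, and shifting $x$ so that the symbol $1$ starting block $\tau(i)$ sits at the origin gives a point of $X$ in $[1]$ whose next $1$ is exactly $\ell_i$ steps to the right, so the instance is positive. Conversely, if $T_j$ never halts then $\ell_i \equiv 0 \bmod q_j$ for every $i \ge j$ (the exception never fires for such $i$ with $h(i) = j$, and when $h(i) \ne j$ the default congruence $\ell_i \equiv 0 \bmod q_j$ is untouched), so in $x$ the distance between any two $1$s is congruent modulo $q_j$ to some sum $\ell_a + \cdots + \ell_c$ with $0 \le a \le c < j$, hence never to $k_{q_j}$; and by Lemma~\ref{lem:CountableConstruction} every point of $X$ with at least two $1$s is a shift of $x$ (the only other points lie in $\B^{-1}(0^*10^*)$ and carry at most one $1$), so the instance is negative. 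Membership of the modular halting problem in $\Sigma^0_1$ is routine once we know $X$ is recursive.

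Finally, the structural claims follow from Lemma~\ref{lem:CountableConstruction}: since $\ell_i \ge i \to \infty$, the nonzero symbols of $x$ are asymptotically far apart and the bound $m(n)$ is computable, so $X$ is a recursive, countable, weakly transitive, quasiminimal subshift whose proper subshifts are among the subshifts of $\B^{-1}(0^*10^*)$, namely $\emptyset$, $\B^{-1}(0^*)$ and $\B^{-1}(0^*10^*)$. Both of the latter two occur in $X$ — $0^\Z$ and $\INF 0 1 0 \INF$ are limit points of shifts of $x$ because $\ell_i \to \infty$ — while $X$ strictly contains $\B^{-1}(0^*10^*)$ since $1\,0^{\ell_0-1}1 \in \B(X)$; hence $X$ has exactly the two nontrivial proper subsystems $\B^{-1}(0^*10^*)$ and $\B^{-1}(0^*)$. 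The only point that needs any care is the count in the definition of $k_{q_j}$, namely that the primes can be taken large enough relative to the number of ``early'' consecutive block sums to leave a residue free; with $q_j > j^2+2$ this is immediate, and the genuine idea is simply that heavy divisibility collapses all but boundedly much of each admissible shift modulo $q_j$, which is exactly what the clopen-set restriction achieved for free in Proposition~\ref{prop:CountableCodedLanguages}.
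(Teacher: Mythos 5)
Your proposal is correct and takes essentially the same route as the paper: the same block substitution $i \mapsto 1\,0^{\ell_i-1}$ over the ruler sequence, heavy divisibility of block lengths so that all blocks of index $\geq j$ contribute $0$ modulo the prime attached to $T_j$, a halting-triggered exceptional residue for blocks with $h(i)=j$, and Lemma~\ref{lem:CountableConstruction} for the recursive/countable/weakly transitive/quasiminimal structure. The only difference is how the finitely many early blocks are neutralized: the paper fixes the target residue $1$ and imposes the growth condition $p(f(i-1))^{p(f(i-1))} \leq p(f(i))$ so that the early sums are too short to realize it, whereas you choose the target residue $k_{q_j}$ adaptively to avoid the $\binom{j+1}{2}$ early consecutive-block sums (and $0$), which lets quadratically growing primes suffice --- a harmless, correct variation of the same idea.
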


\begin{proof}
Enumerate the deterministic Turing machines as $T_0, T_1, T_2, \ldots$. Let $h : \N \to \N$ be the ruler sequence. Let $p(i)$ denote the $i$th odd prime number, and define the \emph{primorial} of $n \geq 2$ to be $n\# = 2 p(1) p(2) \cdots p(\ell)$ where $\ell$ is maximal such that $p(\ell) \leq n$. Thus, $n\#$ is the product of primes up to $n$. Let $f : \N \to \N$ be an increasing recursive function satisfying $p(f(i-1))^{p(f(i-1))} \leq p(f(i))$ for all $i$. Let $\tau$ be the substitution
\[ i \mapsto \begin{array}{ll}
1 0^{p(f(i))\#-1}, & \mbox{if $T_{h(i)}$ does not halt before step $i$.} \\
1 0^{p(f(i))\# + p(f(i))\#/p(f(h(i))) \cdot k_i - 1}, & \mbox{if $T_{h(i)}$ halts before step $i$,} \\
\end{array} \]
where $0 < k_i < p(f(h(i)))$ is minimal such that
\[ p(f(i))\#/p(f(h(i))) \cdot k_i \equiv 1 \bmod p(f(h(i))). \]
Note that such $k_i$ exists because $p(f(i))\#/p(f(h(i)))$ is not divisible by $p(f(h(i)))$.

We let $x = {^\infty} 0.\tau(0123...)$ and $X = \OC{x}$. 
To show that the modular halting problem is $\Sigma^0_1$-complete, we show that there exist two symbols $1$ with distance $\ell \equiv 1 \bmod p(f(j))$ if and only if $T_j$ eventually halts. First, if $T_j$ does halt, then $j = h(i)$ for some $i$ such that $T_j$ halts before step $i$. Then,
\[ 1 0^{p(f(i))\# + p(f(i))\#/p(f(j)) \cdot k_i - 1}1 \sqsubset x, \]
where $\ell = p(f(i))\# + p(f(i))\#/p(f(j)) \cdot k_i \equiv 1 \bmod p(f(j))$ by the choice of $k_i$ and because $j \leq i$.

Let us show that if $T_j$ never halts, then no such distance $\ell$ occurs. First, note that if $T_j$ does not halt, then the distance between the $i$th and $(i+1)$th symbol $1$ produced by the construction is divisible by $p(f(j))$ by construction whenever $i \geq j$. Thus, if there is a distance $\ell \equiv 1 \bmod p(f(j))$ between two symbols $1$ in $x$, then it is among the first $j$ symbols, that is, the distance must occur between two symbols $1$ in the word $\tau(1 2 \cdots (j-1)) \cdot 1$. The distance is never $1$, so it must be at least $p(f(j)) + 1$. However, we have
\[ |\tau(1 \cdots (j-1)) \cdot 1| \leq 2j|\tau(j-1)| = 2jp(f(j-1))\# \leq p(f(j-1))^{p(f(j-1))} \leq p(f(j)), \]
by the assumption on $f$.

This shows that the modular halting problem is undecidable. The other properties again follow from Lemma~\ref{lem:CountableConstruction}. 
\end{proof}

Of course, the growth rate for the distances between consecutive symbols $1$ is not optimal, but we are not aware of essentially simpler substitutions for which the proof is equally short.

We note that just like we could take an arbitrary infinite recursive minimal subshift as the unique minimal subshift in the proof of Theorem~\ref{thm:TransitiveLocallyTestableLanguages}, one could of course use any infinite subshift of the form $\B^{-1}(u^* v u^*)$ in the place of $\B^{-1}(0^*10^*)$, although small additional complications arise if $|u|\! \not| \; |v|$.

Next, let us consider the counting problem. For this, we need four subsystems.

\begin{theorem}
\label{thm:CountablePiecewiseTestableLanguages}
There exists a recursive countable weakly transitive quasiminimal subshift $X$ for which the counting problem is $\Sigma^0_1$-complete, and which has exactly four subsystems, $\B^{-1}(0^*(1 + 2)0^*)$, $\B^{-1}(0^*20^*)$, $\B^{-1}(0^*10^*)$ and $\B^{-1}(0^*)$.
\end{theorem}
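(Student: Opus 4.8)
The plan is to adapt the construction from Theorem~\ref{thm:CountableCodedLanguages}, using Lemma~\ref{lem:CountableConstruction} as the engine for quasiminimality, countability, weak transitivity, and recursiveness. The alphabet will be $\{0,1,2\}$, and the point $x = {}^\infty 0 . \tau(0123\ldots)$ will be the image under a substitution $\tau$ of the natural numbers, designed so that the symbol $2$ plays the role of a ``counted'' occurrence and the symbol $1$ plays the role of an ``uncounted'' one. Concretely, I would arrange that in the block $\tau(i)$, the leading symbol is $1$ on input steps where the $i$th simulated machine $T_{h(i)}$ has not halted, and is a $2$ surrounded by a controlled number of auxiliary $2$'s (or, more simply, the single symbol $2$) exactly when $T_{h(i)}$ has halted by step $i$; the rest of $\tau(i)$ is a long run of $0$'s whose length grows with $i$ (e.g.\ $2^i$ zeros) so that the hypothesis of Lemma~\ref{lem:CountableConstruction} holds with a computable $m(n)$. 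Since nonzero symbols are asymptotically spaced arbitrarily far apart, Lemma~\ref{lem:CountableConstruction} immediately gives that $X = \OC{x}$ is a recursive, countable, weakly transitive quasiminimal subshift whose proper subshifts lie among the subshifts of $\B^{-1}(0^*\{1,2\}0^*)$.

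The key point is then to pin down exactly which of those subshifts actually occur as subsystems of $X$, and to get exactly four. The candidate subsystems are $\B^{-1}(0^*(1+2)0^*)$, $\B^{-1}(0^*20^*)$, $\B^{-1}(0^*10^*)$, and $\B^{-1}(0^*)$, together with $\B^{-1}(0^*)$ being forced and $X$ itself being non-quasiminimal's top element. To ensure $\B^{-1}(0^*10^*)$ is a subsystem, it suffices that the symbol $1$ occurs in $x$ (which it does: infinitely many machines fail to halt by any given step, so $1$ is the leading symbol of infinitely many blocks), hence ${}^\infty 0 1 0^\infty$ is a limit point. To ensure $\B^{-1}(0^*20^*)$ is a subsystem, I must ensure that the symbol $2$ occurs in $x$, which follows because some simulated machine (e.g.\ one that halts immediately) halts, so some block has a $2$. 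And $\B^{-1}(0^*(1+2)0^*)$ is automatically the orbit closure of the union, which is the largest proper subshift once we verify that no point of $X$ contains two nonzero symbols at bounded distance that survive to a proper subshift --- this is exactly the asymptotic-spacing condition of Lemma~\ref{lem:CountableConstruction}. Conversely I must rule out extra subsystems: since in any proper subshift only finitely many nonzero symbols appear, each proper subshift is an orbit closure of points with at most one nonzero symbol, i.e.\ of some ${}^\infty 0 s 0^\infty$ for $s \in \{1,2\}$ or of $0^\Z$, so the list is exhaustive once both ${}^\infty 0 1 0^\infty$ and ${}^\infty 0 2 0^\infty$ are in $X$ and no point of $X$ with a nonzero symbol has bounded recurrence of nonzero symbols; the four subshifts above are precisely the resulting orbit closures.

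The remaining and main work is encoding the counting problem so that it is $\Sigma^0_1$-complete. Recall the counting problem asks, given clopen sets $[C],[D],[E],[F]$ and a number $k$, whether there is a point and a window on which the orbit stays in $[E]$ except at exactly $k$ steps where it is in $[F]$, beginning in $[C]$ and ending in $[D]$. The idea is to use $[E] = [0]_0$, $[F] = [2]_0$, and $[C] = [D] = [1]_0$, so that the question becomes: does $x$ contain a subword of the form $1\,0^{a_0}\,2\,0^{a_1}\,2 \cdots 2\,0^{a_k}\,1$ with exactly $k$ occurrences of $2$ and no other nonzero symbols strictly between the two $1$'s? I would design $\tau$ so that the number of $2$'s appearing consecutively (separated only by runs of $0$'s, with $1$'s as delimiters) between two ``frame'' $1$'s equals the halting time, or a fixed encoding thereof, of $T_{h(i)}$: on the halting branch, emit a block that contributes exactly ``enough'' $2$'s so that the total count of $2$'s between the surrounding $1$'s is the desired $k$ precisely when $T_j$ halts. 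The cleanest route is to make the $i$th block be $1$ if $T_{h(i)}$ has not halted by step $i$, and otherwise be a run $2\,0\,2\,0\cdots$ of length equal to (a computable function of) the true halting time, so that the word $1 (2 0^*)^{m} 1$ occurs in $x$ with $m = $ halting time of $T_j$ for the appropriate $j$. Then the counting problem instance with parameter $k$ is a yes-instance iff some machine halts in exactly $k$ steps --- but for full $\Sigma^0_1$-completeness I instead reduce from plain halting by asking whether the count is $\geq 1$ versus an unbounded padding on the non-halting branch; more carefully, I would let the non-halting branch never produce a framed block with any $2$ at all (always emit $1$ between long zero runs, or a $2$-free pattern), while the halting branch of $T_j$ produces exactly one framed block $1\,2\,0^{2^i}\,1$. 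Then ``$T_j$ halts'' $\iff$ ``there is a window from a $1$ to a $1$ with exactly one $2$ in between and only $0$'s otherwise,'' which is the counting problem with $k=1$; and by choosing the clopen partition to read off the index $j$ (via the lengths of zero runs, as in the recursiveness argument of Theorem~\ref{thm:TransitiveLocallyTestableLanguages}), the halting problem many-one reduces to the counting problem, giving $\Sigma^0_1$-completeness. The main obstacle is getting the bookkeeping of the framing symbols and the zero-run lengths right so that (a) spurious framed blocks from finitely many early steps cannot accidentally produce the forbidden count --- this requires a growth condition on the block lengths analogous to the primorial bound $|\tau(1\cdots(j-1))\cdot 1| \le p(f(j))$ of Theorem~\ref{thm:CountableCodedLanguages}, ensuring no window spanning only early blocks can realize the target count for $T_j$ --- and (b) the subsystem count stays exactly four, which constrains how $1$'s and $2$'s may be interleaved. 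Once those two constraints are met, the verification of each claimed property is routine given Lemma~\ref{lem:CountableConstruction} and the $\Pi^0_1$/recursiveness bookkeeping already illustrated in the preceding proofs.
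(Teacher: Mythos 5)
Your overall skeleton is the same as the paper's: a substitution image $x = {}^\infty 0.\tau(0123\ldots)$ with zero-runs growing in length, Lemma~\ref{lem:CountableConstruction} supplying countability, weak transitivity, quasiminimality and recursiveness, and the four subsystems identified as the orbit closures of points with at most one nonzero symbol. The gap is in the only genuinely new part, the reduction showing the counting problem is $\Sigma^0_1$-complete. Your first attempt (count $=$ halting time) you correctly discard, but your fallback does not work either: with $k=1$ fixed, the instance ``is there a window from $[1]$ to $[1]$ along $[0]$ with exactly one visit to $[2]$'' does not depend on $j$ at all, and your proposal to recover the dependence ``by choosing the clopen partition to read off the index $j$ via the lengths of zero runs'' is not available. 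Clopen sets are finite unions of cylinders and only constrain a bounded window of the point; in your construction the visible data near a counted symbol determines the step index $i$ (through the run length $2^i$), not the machine index $j=h(i)$, and each $j$ corresponds to infinitely many $i$, so the set of configurations you would need is not clopen. Nor can you fix this by stamping a bounded-length code for $j$ next to the $2$: that places nonzero symbols at bounded distance from each other infinitely often, which destroys the spacing hypothesis of Lemma~\ref{lem:CountableConstruction} and introduces extra subsystems, contradicting the requirement of exactly four.

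The missing idea is to let the \emph{count itself} carry the machine index rather than the halting time. The paper takes $\tau(i) = 2\,0^i$ if $T_{h(i)}$ has not halted before step $i$, and $\tau(i) = 2\,(0^i 1)^{h(i)}\,0^i$ if it has. Then with $[C]=[D]=[2]$, $[E]=[0]$, $[F]=[1]$ and $k=j$, a witnessing window must run between two \emph{consecutive} occurrences of $2$ (any intermediate $2$ violates both $[E]$ and $[F]$), and the number of $1$s in such a window is either $0$ or $h(i)$; hence a window with exactly $j$ visits to $[1]$ exists if and only if $T_j$ eventually halts. This is a direct many-one reduction with fixed clopen sets, the spacing condition of Lemma~\ref{lem:CountableConstruction} still holds because the $1$s inside a halting block are separated by $0^i$, and no growth condition analogous to the primorial bound of Theorem~\ref{thm:CountableCodedLanguages} is needed, since spurious counts from early blocks cannot arise once windows are confined between consecutive $2$s.
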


\begin{proof}
Let $T_i$ be an enumeration of Turing machines and $h$ and infinite-to-one computable mapping. Let $\tau$ be the substitution
\[ i \mapsto \begin{array}{ll}
2 0^i, & \mbox{if $T_{h(i)}$ does not halt before step $i$.} \\
2 (0^i 1)^{h(i)} 0^i, & \mbox{if $T_{h(i)}$ halts before step $i$,} \\
\end{array} \]
We let $x = {^\infty} 0.\tau(0123...)$ and $X = \OC{x}$. The counting problem is clearly $\Sigma^0_1$-complete, because there exists a point that travels from the cylinder $[2]$ back to itself along $[0]$ visiting $[1]$ exactly $j$ times if and only if $T_j$ eventually halts. The other properties again follow from Lemma~\ref{lem:CountableConstruction}. 
\end{proof}

\subsection{Model-checking for context-free languages}

In \cite{DeKuBl06}, it is shown that the model-checking problem of regular languages is decidable for minimal systems, and in this section, we have shown that this problem is undecidable for more complex subshifts. In another direction, we could ask how far we must step from the class of regular languages to find undecidable model-checking problems.

We show that the model-checking problem is hard at least for context-free languages. Our example is a subshift of the \emph{Dyck shift (with labels $\{1,2,3\}$)}, the subshift of $S = \{[_1, [_2, [_3, ]_3, ]_2, ]_1\}^\Z$ where the parentheses are balanced, in the sense that the process of recursively erasing subwords of the forms $[_s]_s$ for $s \in \{1,2,3\}$ never introduces a subword of the form $[_s]_{s'}$ with $s \neq s'$.

The language of the Dyck shift is the set of factors of the \emph{Dyck language} $L$ generated by the context-free grammar $A \mapsto AA | [_aA]_a | [_bA]_b | [_cA]_c | \lambda$. A deterministic push-down automaton $M$ for it is obtained by pushing $s$ on input $[_s$, and popping the corresponding symbols on closing brackets (rejecting the word if these do not match). For two words $u, v$ over $S$ which occur as subwords of words in $L$, write $u \sim v$ if the two words correspond to the same element of the syntactic monoid of $L$. (In other words, they close the same parentheses, and leave the same parentheses open).

\begin{theorem}
\label{thm:ContextFree}
There exists a minimal $\Pi^0_1$ subshift $X$ whose model-checking problem for context-free languages is $\Sigma^0_1$-complete.
\end{theorem}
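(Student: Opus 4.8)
The idea is to build a minimal subshift $X$ that simultaneously (1) has a recursively enumerable set of forbidden patterns (so it is $\Pi^0_1$), (2) is minimal, and (3) encodes the halting problem into its interaction with a fixed context-free language. Minimality forces us to work with a uniformly recurrent point rather than a union of orbits, so the halting information must be ``diluted'' throughout a single uniformly recurrent structure. The natural tool is a Toeplitz-like sequence, or a sequence built from a substitution-like hierarchical structure, where at level $i$ we insert a gadget whose shape depends on whether $T_{h(i)}$ has halted by step $i$, and where $h$ is infinite-to-one so that each machine is consulted infinitely often. The point is that a forbidden-pattern presentation only needs to \emph{rule out} patterns, and ``$T_k$ has halted'' is a $\Sigma^0_1$ event, so we can forbid the ``halted-looking'' gadget for machine $k$ until (if ever) $T_k$ halts; this keeps the language $\Pi^0_1$ while still being able to encode halting.

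**Key steps, in order.** First I would fix the Dyck shift with labels $\{1,2,3\}$ and recall that its language is context-free, with the syntactic-congruence relation $\sim$ recording which brackets a word closes and which it leaves open. Second, I would design a hierarchical family of ``words'' $B_i$ over $S=\{[_1,[_2,[_3,]_3,]_2,]_1\}$ that are well-balanced (so every $B_i \sim \lambda$), built so that $B_{i+1}$ contains many disjoint copies of $B_i$ arranged with bounded gaps — this is what guarantees uniform recurrence of the limit point $x$, hence minimality of $X=\overline{\mathcal O(x)}$. Third, inside each $B_i$ I would embed a ``query gadget'' for machine $h(i)$: in the non-halting case the gadget uses label $1$ somewhere (say a matched pair $[_1\cdots]_1$ of some canonical length), and in the halting-by-step-$i$ case it instead uses label $2$, or nests an extra $[_2\cdots]_2$, in a position detectable by a context-free language that counts bracket depth or checks a label against a pushdown. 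Fourth, I would exhibit the target context-free language $L_k$: roughly, strings that are well-nested, begin and end with designated markers from the recurrent backbone, and contain the ``halted'' gadget for machine $k$; membership in $\mathcal L(X)\cap L_k$ should hold iff $T_k$ halts, giving the $\Sigma^0_1$-hardness. Since checking ``$w \in \mathcal L(X)$'' for the specific gadget-bearing $w$ reduces to running $T_k$, the problem is in $\Sigma^0_1$, so it is $\Sigma^0_1$-complete. Fifth, I would verify the $\Pi^0_1$ presentation: enumerate forbidden patterns by, for each $i$ and each $k=h(i)$, forbidding the ``halted'' gadget shape for machine $k$ at level-$i$ positions unless and until $T_k$ halts within $i$ steps — a semidecidable condition — together with the (decidable) structural forbidden patterns enforcing the hierarchical Toeplitz layout.

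**The main obstacle.** The delicate point is reconciling minimality with the encoding: in the universal examples earlier in the paper (e.g. Theorem~\ref{thm:TransitiveLocallyTestableLanguages}, Theorem~\ref{thm:CountableCodedLanguages}) the halting information lives in sparse or non-recurrent features, but a minimal subshift must have \emph{every} pattern occurring with bounded gaps, so if machine $h(i)$ ``looks halted'' at one level-$i$ occurrence it must look halted at all of them — i.e. the gadget shape for a given machine can only depend on the machine, not on the position, yet $h$ is infinite-to-one so machine $k$ is queried at infinitely many levels $i$ with different time budgets. The resolution I expect to need is that ``halted'' is a \emph{monotone} predicate in the time budget: once $T_k$ has halted by step $i_0$, it has halted by step $i$ for all $i\ge i_0$, so from some level on \emph{all} level-$i$ gadgets for $k$ switch to the halted shape, and before that they are all in the non-halted shape — uniform recurrence only sees the eventual behavior at large scales, so a compactness/limit argument shows the limit point $x$ is still uniformly recurrent and that $\overline{\mathcal O(x)}$ is minimal. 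Making this limit argument precise — that the early ``wrong-shaped'' gadgets do not create extra minimal subsystems or destroy uniform recurrence, and that the context-free language $L_k$ really does detect the switch — is where the technical work concentrates; the rest (context-freeness of $L_k$, $\Pi^0_1$-ness of $X$, the reduction) is routine.
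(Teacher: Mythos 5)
Your overall architecture is the paper's: work inside the Dyck shift over $\{[_1,[_2,[_3,]_3,]_2,]_1\}$, build a hierarchy of balanced blocks in which every level-$i$ block occurs (with bounded gaps) in every level-$(i+1)$ block, so that the limit subshift is minimal, and let the level-$i$ block answer the \emph{decidable} question ``does $T_{h(i)}$ halt within $i$ steps''. But two places in your sketch are genuine gaps rather than routine details. First, the ``halted'' gadget you describe (label $2$ instead of label $1$, or one extra nested $[_2\cdots]_2$) does not encode \emph{which} machine it answers for, so no single context-free language $L_k$ as you describe it can be sensitive to machine $k$ specifically. The paper's gadget writes the index in unary: when $T_{h(i)}$ has halted it inserts $[_3([_1)^{h(i)}[_2\,]_2(]_1)^{h(i)}\cdots]_3$, so that the deterministic Dyck automaton momentarily has $31^k2$ on top of its stack, and $L_k$ is ``Dyck-legal words during whose parse $31^k2$ appears atop the stack'' (recognized by a PDA that inspects the top $k+2$ stack symbols after each step). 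Second, the soundness direction -- if $T_k$ never halts then \emph{no} word of $X$ lies in $L_k$ -- is the technical heart, not an afterthought: the marker could a priori arise across block boundaries, inside the connector words $u_i$ that realize all legal pairs, or from gadgets of other machines. The paper controls this with an inductive stack-discipline invariant (each level-$i$ block behaves like a single bracket whose push/pop happens at its last symbol, and reading any proper prefix deposits on the stack either nothing or a word beginning with $3$), together with taking $k$ larger than the bounded number of blocks in a connector ($k>54$ there). Your proposal defers exactly this verification.

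A smaller point: the difficulties you single out are largely misplaced. Since the level-$i$ gadget depends only on whether $T_{h(i)}$ halts within $i$ steps, the whole hierarchy is computable and the language of $X$ is in fact recursive (consistent with Theorem~\ref{thm:MinimalRecursive}); your ``forbid the halted-looking gadget until $T_k$ halts'' enumeration is unnecessary, and if taken literally as the definition of $X$ it would be a different and more delicate construction whose minimality and nonemptiness you would then have to re-establish. Likewise there is no tension between minimality and the early ``wrong-shaped'' gadgets: blocks of every level, halted-shaped or not, legitimately belong to the language and occur syndetically because each is a subword of all higher-level blocks, so no monotonicity or limit argument is needed beyond the nesting itself. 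The hardness reduction only needs that the machine-$k$ marker occurs \emph{somewhere} in the language once $T_k$ halts, and never otherwise.
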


\begin{proof}
For $s \in \{1,2,3\}$ let ${[_i^0} = {[_i}$, and ${]_i^0} = {]_i}$. Let $W_0 = \{[^0_1, [^0_2, [^0_3, ]^0_3, ]^0_2, ]^0_1\}$. Inductively, suppose $W_i = \{[^i_1, [^i_2, [^i_3, ]^i_3, ]^i_2, ]^i_1\}$ is defined, and each word in $W_i$ is a concatenation of words of $W_{i-1}$, and contains all concatenations $uv$ of words $u, v \in W_{i-1}$ such that $uv$ is a subword of a word of $L$. We also inductively suppose that each $w \in W_i$ corresponds to one of the symbols $a \in S$ of the original alphabet in the sense that the push/pop action corresponding to $a$ is performed on $w$ when reading the very last symbol, and during the reading of proper prefixes, no existing data on the stack is popped, and all words put on the stack begin with $3$. More precisely, we suppose the words in $W_i$ are of the same length $n_i$, and for all $s \in \{1,2,3\}$, $([^i_s)_{[0,n_i-2]} \sim (]^i_s)_{[0,n_i-2]} \sim \lambda$, $([^i_s)_{n_i-1} = [_s$ and $(]^i_s)_{n_i-1} = {]_s}$, and if the stack of $M$ initially contains $v$, then after reading any proper prefix of a word $u \in W^i$, the stack contains either $v$ or $v3w$ for some word $w$.

We define $W_{i+1}$ as a concatenation of words of $W_i$, respecting the inductive assumptions. Let $T_1, T_2, T_3, \ldots$ be an enumeration of Turing machines, and let $h : \N \to \N$ be a computable infinite-to-one mapping. Let $u_i$ be any word with $u_i \sim \lambda$ containing all legal concatenations of pairs of words in $W_i$. For example,
\[ u_i = \prod_{s, s' \in \{1,2,3\}} [^i_s[^i_{s'}]^i_{s'}]^i_s [^i_{s'}]^i_{s'} \]
is such a word, no matter what order the pairs $s, s'$ are listed in.

Now, if $T_{h(i)}$ does not halt in $i$ steps or less, we define
\[ [^{i+1}_s = [^i_3 u_i ]^i_3 [^i_s \mbox{, and } {]^{i+1}_s} = [^i_3 u_i ]^i_3 ]^i_s. \]
If $T_{h(i)}$ does halt in $i$ steps or less, we define
\[ [^{i+1}_s = [^i_3 ([^i_1)^{h(i)} [^i_2 ]^i_2 (]^i_1)^{h(i)} u_i ]^i_3 [^i_s \mbox{, and } ]^{i+1}_s = [^i_3 ([^i_1)^{h(i)} [^i_2 ]^i_2 (]^i_1)^{h(i)} u_i ]^i_3 ]^i_s. \]

We let
\[ W_{i+1} = \{ [^{i+1}_1, [^{i+1}_2, [^{i+1}_3, ]^{i+1}_3, ]^{i+1}_2, ]^{i+1}_1 \}. \]
The subshift $X$ is defined as the set of limit points of (say) the words $[^j_1$ as $j \rightarrow \infty$.

Now, define $L_k \subset L$ as the language defined by the automaton $A_k$, which simulates the deterministic push-down automaton $A$ for the Dyck language, but additionally inspects the $k+2$ top symbols of the stack after every step for the word $3 1^k 2$. The automaton accepts the word $w$ if and only if $w \in L$, and $3 1^k 2$ was seen on the top of the stack during the execution.

We claim that $X$ is minimal, $\Pi^0_1$ and for $j$ sufficiently large, its language intersects $L_j$ if and only if $T_j$ eventually halts. For minimality, simply note that if $u \sqsubset X$, then $u \sqsubset [^i_1$ for some $i$ (by definition). This word appears in every word of $W_{i+1}$ and every point of $X$ is a concatenation of these words.\footnote{We note that for this argument, there is no need to have all legal pairs of symbols in $u_i$, only all symbols, although the definition of $X$ is more robust if all pairs occur.} To show $X$ is $\Pi^0_1$, we note that given any word $w$ we can check whether $w \in \B(X)$ by computing the sets $W_i$ until their lengths exceed that of $w$. Then by the choice of $u_i$, every word of length $|w|$ that appears in $X$ appears in $W_{i+1}$.\footnote{For this, on the other hand, having all pairs in $u_i$ is essential.}

Now, note that if $T_j$ eventually halts, then certainly $L_j$ intersects the language of $X$ because for any large enough $i$ with $h(i) = j$, $[^i_3 ([^i_1)^{h(i)} [^i_2 \sqsubset [^{i+1}_a$ and ${[^i_3 ([^i_1)^{h(i)} [^i_2} \sim [_3 ([_1)^{h(i)} [_2$, which pushes the word $31^j2$ on top of the stack of $M$.

Conversely, suppose $j$ is large. If there exists a subword of $X$ where $31^j2$ appears on top of the stack during the run of $A$ on the word, then in particular there exists such a word which is a subword of some $[^{i+1}_s$ or $]^{i+1}_s$. Choose the minimal such~$i$, and suppose that it occurs in $v = [^{i+1}_s$ (the case of a closing bracket being similar). Let $k$ be minimal such that $31^j2$ is on the top of the stack after reading $v_{[0,k]}$. If $v_{[0,k]}$ is not a concatenation of words of $W_i$, then the automaton has read a proper prefix of a word of $W_i$ up to coordinate $k$. By the inductive assumption, it has then written a word beginning with $3$ on the stack, or nothing. In the first case, the word $31^j2$ was written on top of the stack while reading that word of $W_i$, so $i$ is not minimal. In the second case, reading the prefix did not modify the stack, so $k$ is not minimal.

In the remaining case, we have read some concatenation of words of $W_i$. It is clear by the construction that $31^j2$ is written on top of the stack after reading some such prefix if and only if $T_j$ halts or this happened because for some prefix $[^i_3 u$ of $[^i_3 u_i ]^i_3 [^i_s$ wrote this on top of the stack. Since $u$ is a concatenation of at most $54$ words of $W_i$, $j > 54$ is enough to prevent this. 
\end{proof}

\section{Quasiminimality and decidability}
\label{sec:Decidability}

In this section, we give various decidability results, which complement the results of the previous section. We take a rather high-level approach to decidability, and usually only describe the logical deductions the algorithm must make to determine the right answer.

\subsection{Decidability in the general case}

One could say that it is a folklore result that a minimal $\Pi^0_1$ subshift has a recursive language; we are not aware of a reference that states and proves this explicitly, although the proof is given in multiple places. The result is shown in \cite{Ho09} in the case of multidimensional SFTs, and the proof works in general. It is also essentially shown in \cite{BaJe10}, but the connection is not made very explicit. We give a proof below.

\begin{theorem}
\label{thm:MinimalRecursive}
A minimal $\Pi^0_1$ subshift is recursive.
\end{theorem}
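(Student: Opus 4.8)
The statement to prove is Theorem~\ref{thm:MinimalRecursive}: a minimal $\Pi^0_1$ subshift $X \subset S^\Z$ has a recursive language, i.e.\ $\B(X) \in \Delta^0_1$. Since $X$ is $\Pi^0_1$, the complement $S^* \setminus \B(X)$ is already $\Sigma^0_1$ by the Lemma characterizing $\Pi^0_1$ subshifts: enumerate the forbidden words $w_1, w_2, \ldots$, and a word $u$ is \emph{not} in $\B(X)$ iff some $w_i$ is a subword of every sufficiently long word extending $u$ on both sides --- more simply, iff $u$ itself contains some $w_i$ or all two-sided extensions of $u$ of some length $n$ are eliminated by the $w_i$ seen so far, which is detected by a terminating search using compactness. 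Either way, $S^* \setminus \B(X)$ being $\Sigma^0_1$ is the easy half. The content of the theorem is that $\B(X)$ is \emph{also} $\Sigma^0_1$, i.e.\ we can positively confirm membership.

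The plan is to use uniform recurrence. Since $X$ is minimal, it is uniformly recurrent: for every $w \sqsubset X$ there is $n = n(w)$ such that $w$ occurs in every word of $\B_n(X)$. I would run the following semi-algorithm to confirm $u \in \B(X)$: dovetail over candidate lengths $n \geq |u|$; for each $n$, use the $\Pi^0_1$ presentation to compute the SFT approximation that removes from $S^n$ all words ruled out by the forbidden patterns enumerated so far, obtaining a shrinking sequence of candidate sets for $\B_n(X)$; and simultaneously check whether, at some stage, \emph{every} word in the current candidate set for $\B_n(X)$ contains $u$ as a subword. I claim this search halts precisely when $u \in \B(X)$. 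If $u \in \B(X)$, then by uniform recurrence every word of $\B_n(X)$ contains $u$ for $n = n(u)$; and by compactness this property is already witnessed by a finite list of forbidden patterns (the SFT approximation of high enough order has $\B_n$ equal to $\B_n(X)$ and the property ``every length-$n$ word contains $u$'' is visible at that finite stage), so the search succeeds. Conversely, if $u \notin \B(X)$, then $u$ is not a subword of any point of $X$, hence not of any word of $\B_n(X)$ for large $n$, so the candidate set (which always contains $\B_n(X)$) never consists entirely of words containing $u$, and the search never halts on this branch --- but the complementary semi-algorithm for $u \notin \B(X)$ does halt. Running both in parallel decides $\B(X)$.

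The step I expect to require the most care is the interplay between the two finiteness/compactness facts: (i) that uniform recurrence gives a single $n$ for which every legal length-$n$ word contains $u$, and (ii) that this ``every word contains $u$'' property about the true language $\B_n(X)$ transfers to some \emph{finite-stage} over-approximation $\B_n(Y_k)$ with $k$ large --- this is where I invoke $\B_n(Y_k) = \B_n(X)$ for $k \geq n$ together with the fact that $Y_k$'s forbidden set is finite and computable from the enumeration, so the algorithm genuinely reaches that stage. The rest --- checking that a finite set of words all contain $u$, dovetailing, and running the $\Pi^0_1$ semi-algorithm for the complement in parallel --- is routine. I would also remark that the whole procedure is uniform in (a machine enumerating) the forbidden patterns of $X$, which is what later results (e.g.\ the discussion around Theorem~\ref{thm:MinimalRecursive} in the introduction) implicitly use.
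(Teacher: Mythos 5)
Your proposal is correct and follows essentially the same route as the paper's proof: semidecide $u \notin \B(X)$ directly from the $\Pi^0_1$ presentation, and confirm $u \in \B(X)$ by using uniform recurrence to get an $n$ with $u$ occurring in every word of $\B_n(X)$, then compactness to transfer ``every legal length-$n$ word contains $u$'' to a finite-stage SFT approximation $Y_k$ that the algorithm actually reaches. The one caveat, which the paper makes explicit, is that the positive test is sound only under the (non-uniform) assumption that $X$ is nonempty --- if $X = \emptyset$ the candidate set can become empty and your check passes vacuously --- so your closing remark about uniformity in the enumerating machine should be qualified exactly as the paper does after its proof.
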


\begin{proof}
Given an alphabet $S$, a word $w \in S^*$ and a Turing machine enumerating the forbidden patterns of a nonempty minimal subshift $X$ over $S$, we show that it is decidable whether $w \sqsubset X$.

If $w \not\sqsubset X$, then an algorithm can eventually detect this by the assumption that $X$ is $\Pi^0_1$. If $w \sqsubset X$, then by uniform recurrence, there exists $n$ such that $w \sqsubset u$ for all $u \in \B_n(X)$. By compactness, there exists $k$ such that the SFT $Y$ defined by the first $k$ forbidden patterns $v_1, v_2, \ldots, v_k$ enumerated by the given Turing machine defining $X$ satisfies $\B_n(Y) = \B_n(X)$. It follows that after enumerating the first $k$ forbidden patterns, the algorithm has found an SFT $Y$ such that $X \subset Y$ and $w \sqsubset u$ for all $u \in \B_n(Y)$, and can deduce that $w \sqsubset X$ (on the assumption that $X$ is nonempty). 
\end{proof}

The algorithm is uniform in the given Turing machine and given word, though only when restricted to inputs that define nonempty subshifts. It is in fact easily seen to be undecidable whether a given Turing machine defines an empty subshift or a nonempty minimal subshift: the given Turing machine may output the forbidden patterns of any $\Pi^0_1$ minimal subshift as long as another simulated Turing machine does not halt, and forbid every symbol if it does halt.

We show that the non-uniform decidability result generalizes to the class of quasiminimal subshifts as well, in Theorem~\ref{thm:Recursive} below. As a quasiminimal subshift contains only finitely many subshifts, it is natural to approach such subshifts by induction on the number of proper subshifts they have, and we give the following notation for this.


\begin{definition}
For $X$ quasiminimal, define
\[ \QW(X) = |\{ Y \;|\; Y \subset X, Y \mbox{ is a subshift} \}|. \]
\end{definition}

The empty subshift and the subshift itself are also counted: if $X$ is nonempty and minimal, then $\QW(X) = 2$, and $\QW(\emptyset) = 1$.

We need the following basic structural results.

\begin{definition}
If $w \sqsubset X$ and $u \leq_X w$ for all $u \sqsubset X$, then $w$ is called a \emph{generator (for $X$)}.
\end{definition}

\begin{lemma}
\label{lem:Generator}
Let $X$ be quasiminimal. Then $X$ has a generator if and only if it is not the union of its proper subshifts.
\end{lemma}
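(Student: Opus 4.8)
The statement is an equivalence, so I would prove the two directions separately. The easier direction is the contrapositive of ``generator $\Rightarrow$ not a union of proper subshifts'': if $X$ \emph{is} the union of its proper subshifts $Y_1, \ldots, Y_n$ (finitely many since $X$ is quasiminimal), then no word $w \sqsubset X$ can be a generator. Indeed, $w$ lies in some $\B(Y_i)$, and since $Y_i$ is a proper subshift there is a word $u \sqsubset X$ with $u \not\sqsubset Y_i$; but then any point $x \in X$ with $w \sqsubset x$ can be chosen inside $Y_i$, witnessing $u \not\leq_X w$. So $w$ fails to be a generator. This handles one implication cleanly.

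For the other direction, suppose $X$ is \emph{not} the union of its proper subshifts; I must produce a generator. Pick a point $x \in X$ that does not lie in any proper subshift of $X$; equivalently, $\OC{x} = X$, since $\OC{x}$ is the smallest subshift containing $x$ and it is not proper. The claim will be that some central word $w = x_{[-n,n]}$ of $x$ is a generator for $X$, for $n$ large enough. Suppose not: then for every $n$ the word $w_n := x_{[-n,n]}$ fails to generate $X$, meaning there is a word $u_n \sqsubset X$ and a point $y_n \in X$ with $w_n \sqsubset y_n$ but $u_n \not\sqsubset y_n$. Equivalently, $\OC{y_n}$ is a proper subshift of $X$ containing $w_n$. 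Now I want to use finiteness: since $X$ has only finitely many subshifts, the sequence $(\OC{y_n})_n$ takes only finitely many values among the proper subshifts of $X$, so some fixed proper subshift $Z \subsetneq X$ equals $\OC{y_n}$ for infinitely many $n$. But then $w_n \sqsubset Z$ for infinitely many $n$, and since the $w_n$ are nested central windows of $x$ and $Z$ is closed (a compactness / König's lemma argument), this forces $x \in Z$, contradicting $\OC{x} = X$. Hence some $w_n$ is a generator.

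The main obstacle — really the only nontrivial point — is the last compactness step: from ``the length-$(2n+1)$ central word of $x$ occurs in $Z$ for infinitely many $n$'' deduce $x \in Z$. The point is that if $x_{[-n,n]} \sqsubset Z$ then (shifting) there is $z^{(n)} \in Z$ with $z^{(n)}_{[-n,n]} = x_{[-n,n]}$; by compactness of $Z$ a subsequence of the $z^{(n)}$ converges, and since the windows agree with $x$ on growing intervals the limit must be $x$ itself, so $x \in Z$. This is standard, and I'd state it in one or two sentences rather than belabor it. Everything else is bookkeeping: translating ``$w$ is a generator'' via the definition of $\leq_X$ into statements about orbit closures, and invoking the hypothesis that $\QW(X)$ is finite to pin down a single repeated $Z$.
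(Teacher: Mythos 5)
Your proof is correct, and the easy direction coincides with the paper's. For the substantive direction, however, you take a genuinely different route. The paper argues directly: since $X$ is quasiminimal, the union $Y$ of all proper subshifts is a \emph{finite} union of subshifts, hence itself a proper subshift of $X$; one then takes any word $w \sqsubset X$ with $w \not\sqsubset Y$ and verifies the generator property outright -- for any $u \sqsubset X$, the subshift of $X$ obtained by forbidding $u$ is proper, hence contained in $Y$, hence omits $w$, so every point containing $w$ contains $u$, i.e.\ $u \leq_X w$. This is a one-step verification with no compactness and it exhibits the generators explicitly (exactly the words avoiding $Y$). You instead pick a point $x$ with $\OC{x} = X$ (which exists precisely because $X$ is not the union of its proper subshifts) and run a proof by contradiction: if no central window $x_{[-n,n]}$ generates, each window lies in some proper subshift, the finiteness of the subshift lattice gives a single proper $Z \subsetneq X$ containing cofinally many (hence, by nesting, all) windows, and a standard compactness argument puts $x \in Z$, contradicting $\OC{x} = X$. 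Both arguments hinge on quasiminimality -- the paper uses it to make $Y$ a single proper subshift, you use it for the pigeonhole -- and both are sound; the paper's version is shorter and constructive about which words generate, while yours yields the slightly different (and also useful) conclusion that every sufficiently long central word of any point with full orbit closure is a generator.
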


\begin{proof}
First, we suppose that $X$ is the union of its proper subshifts. Then, whenever $w \sqsubset X$, there must exist a proper subshift $Y \subsetneq X$ with $w \sqsubset X$. If $Y \subsetneq X$, then $u \not\sqsubset Y$ for some $u \sqsubset X$, which implies $u \not\leq_X w$. Thus, no $w \sqsubset X$ is a generator for $X$.

Now, suppose $X$ is not the union of its proper subshifts. Let $Y \subsetneq X$ be the union of its proper subshifts. Note that since $X$ has only finitely many proper subshifts, $Y = Y_1 \cup \cdots \cup Y_k$ for some $k$ and subshifts $Y_i \subsetneq X$, and thus $Y$ is itself a proper subshift of $X$. Let $w$ be any word that occurs in $X$ but not $Y$. Then there exists no proper subshift $Z$ of $X$ containing $w$. It follows that, for any $u \sqsubset X$, the subshift of $X$ where $u$ is forbidden does not contain $w$, which means $u$ occurs in every point of $X$ containing $w$. It follows that $u \leq_X w$. 
\end{proof}

Note that if $w$ is a generator for $X$, then since $u \leq_X w$ for any $u \sqsubset X$, any point $x$ containing $w$ must contain every word $u \sqsubset X$, that is, every point $x \in X$ containing $w$ is weakly transitive (in particular, $X$ is weakly transitive).

\begin{lemma}
\label{lem:SubshiftsPi}
If $X$ is quasiminimal and $Y$ is a subshift of $X$, then there exists a finite list of words $v_1, v_2, \ldots, v_k$ such that $Y$ is obtained from $X$ by forbidding the words $v_i$. In particular, every subshift of a $\Pi^0_1$ quasiminimal subshift is $\Pi^0_1$.
\end{lemma}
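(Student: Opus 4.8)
The plan is to argue entirely at the level of languages. Put $F = \B(X) \setminus \B(Y)$, the set of words that occur in $X$ but not in $Y$. Because a subshift is determined by its language and is topologically closed, a point $x \in X$ lies in $Y$ if and only if every central pattern of $x$ belongs to $\B(Y)$, equivalently if and only if $w \not\sqsubset x$ for all $w \in F$. For each $w \in F$ I would then set $Y_w = \{ x \in X \;|\; w \not\sqsubset x \}$. This is a subshift of $X$: it is obviously shift-invariant, and it is closed because its complement in $X$ is $X \cap \bigcup_{i \in \Z} \sigma^{-i}([w])$, a union of cylinders. By the previous sentence, $Y = \bigcap_{w \in F} Y_w$.

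Now I would invoke quasiminimality of $X$: since $X$ has only finitely many subshifts, the family $\{Y_w \;|\; w \in F\}$ is finite. Choose $v_1, \ldots, v_k \in F$ with $\{Y_{v_1}, \ldots, Y_{v_k}\} = \{Y_w \;|\; w \in F\}$. Then
\[ Y = \bigcap_{w \in F} Y_w = \bigcap_{j=1}^{k} Y_{v_j} = \{ x \in X \;|\; v_j \not\sqsubset x \text{ for all } j = 1, \ldots, k \}, \]
which is exactly the subshift obtained from $X$ by forbidding the finite list $v_1, \ldots, v_k$. For the $\Pi^0_1$ claim, I would use the equivalent characterizations in the $\Pi^0_1$ lemma above: if $X$ is $\Pi^0_1$, fix a Turing machine enumerating a set $F_X$ of words with $X = \{ x \;|\; u \not\sqsubset x \ \forall u \in F_X \}$; then $Y = \{ x \;|\; u \not\sqsubset x \ \forall u \in F_X \cup \{v_1, \ldots, v_k\} \}$, and $F_X \cup \{v_1, \ldots, v_k\}$ is still recursively enumerable, so $Y$ is $\Pi^0_1$ by the same characterization.

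There is no genuinely hard step here; the one point deserving an explicit sentence is the equivalence ``$x \in Y$ iff every central pattern of $x$ lies in $\B(Y)$'', which is where closedness of $Y$ (a point all of whose central patterns occur in $Y$ is a limit of points of $Y$) and the ``a subshift is determined by its language'' principle enter. After that, the whole content is the pigeonhole observation that there are only finitely many subshifts $Y_w$, so finitely many forbidden words suffice — which is precisely the quasiminimality hypothesis being cashed in.
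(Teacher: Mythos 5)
Your proof is correct, and it takes a mildly but genuinely different route from the paper's. The paper writes $Y = \bigcap_i (Y_i \cap X)$, where $Y_i$ is the order-$i$ SFT approximation of $Y$; the sets $Z_i = Y_i \cap X$ form a decreasing chain of subshifts of $X$, which must stabilize because $X$ has only finitely many subshifts, and the stable value $Y_j \cap X$ is exactly $X$ with the finitely many words $S^j \setminus \B_j(Y)$ forbidden. You instead write $Y = \bigcap_{w \in F} Y_w$ with $F = \B(X) \setminus \B(Y)$ and $Y_w$ the subshift of $X$ obtained by forbidding the single word $w$, and then use pigeonhole on the finitely many subshifts of $X$ to replace the (possibly infinite) intersection by finitely many representatives $Y_{v_1}, \ldots, Y_{v_k}$. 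Both arguments cash in quasiminimality as the same finiteness statement about the subshift lattice of $X$; the paper's version needs the standard fact that the SFT approximations intersect to $Y$, while yours needs the equally standard fact — which you correctly isolate and justify via closedness and shift-invariance — that a point of $X$ all of whose subwords lie in $\B(Y)$ already belongs to $Y$. A cosmetic difference is that the paper's forbidden words can be taken all of one length $j$, whereas yours have varying lengths; for the statement, and for the $\Pi^0_1$ consequence (which you handle exactly as the paper does, by adjoining $v_1, \ldots, v_k$ to an enumeration of forbidden words defining $X$), this makes no difference.
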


\begin{proof}
Let $Y_i$ be the $i$th SFT approximation of $Y$, and consider the subshifts $Z_i = Y_i \cap X$. We have $\bigcap Y_i = Y$, and thus also $\bigcap_i Z_i = Y$. The $Z_i$ are a decreasing sequence of subshifts of $X$, and thus for some $j$ we have $Z_j = Z_{j+k}$ for all $k \geq 0$. It follows that $Y = Z_j$. Since $Y_i$ is an SFT, $Y_i \cap X$ is obtained from $X$ by forbidding a finite set of words. The latter claim is then trivial: the Turing machine outputting the forbidden words of a subshift $Y \subset X$ first outputs the finitely many words $v_i$, and then the forbidden words of $X$. 
\end{proof}

\begin{theorem}
\label{thm:Recursive}
A quasiminimal $\Pi^0_1$ subshift $X \subset S^\Z$ is recursive.
\end{theorem}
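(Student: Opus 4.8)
I would prove this by induction on $\QW(X)$, using the structural dichotomy provided by Lemma~\ref{lem:Generator} together with the semidecidability of $\leq_X$ from Lemma~\ref{lem:LeqSemidecidable} and the closure statement in Lemma~\ref{lem:SubshiftsPi}. The base case is $\QW(X) = 1$, i.e.\ $X = \emptyset$, whose (empty) language is trivially recursive. For the inductive step, fix a $\Pi^0_1$ quasiminimal $X$ with $\QW(X) \geq 2$ and assume the theorem for all $\Pi^0_1$ quasiminimal subshifts with strictly smaller $\QW$-value. By Lemma~\ref{lem:Generator}, either $X$ is the union of its (finitely many) proper subshifts, or $X$ has a generator, and I treat these two cases separately.

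\textbf{Case 1: $X$ is the union of its proper subshifts.} Then $X = Y_1 \cup \cdots \cup Y_k$, where $Y_1, \ldots, Y_k$ are the proper subshifts of $X$ (finitely many by quasiminimality). Each $Y_i$ is again quasiminimal, since every subshift of $Y_i$ is a subshift of $X$; it is $\Pi^0_1$ by Lemma~\ref{lem:SubshiftsPi}; and $\QW(Y_i) < \QW(X)$, because $X$ is a subshift of $X$ but not of $Y_i$ while every subshift of $Y_i$ is a subshift of $X$. By the induction hypothesis each $\B(Y_i)$ is recursive, and since $\B(X) = \bigcup_{i=1}^{k} \B(Y_i)$, the language $\B(X)$ is recursive as a finite union of recursive languages.

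\textbf{Case 2: $X$ has a generator $w$.} Then $w \sqsubset X$ and $u \leq_X w$ for all $u \sqsubset X$. The key observation is that for an \emph{arbitrary} word $v$ one has $v \sqsubset X \iff v \leq_X w$: the forward direction is the defining property of a generator, and conversely, if $v \leq_X w$ then picking $x \in X$ with $w \sqsubset x$ (possible since $w \sqsubset X$) gives $v \sqsubset x$, hence $v \sqsubset X$. Now, to decide $\B(X)$, on input $v$ I run two semi-algorithms in parallel: one enumerating a witness that $v \notin \B(X)$ (available since $X$ is $\Pi^0_1$), and one, supplied by Lemma~\ref{lem:LeqSemidecidable}, searching for a witness that $v \leq_X w$. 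By the equivalence above exactly one of the two halts for every $v$, so this decides membership in $\B(X)$.

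\textbf{Expected difficulty.} The induction on $\QW$ is the natural organizing principle and I do not anticipate a real obstacle, since the work is already packaged into the three cited lemmas. The only point that needs a little care is Case 2: one must note that the generator $w$ need not be computable from a presentation of $X$, so it is simply hard-wired into the algorithm — this non-uniformity is unavoidable and already present in Theorem~\ref{thm:MinimalRecursive} — and one must verify the equivalence $v \sqsubset X \iff v \leq_X w$, which is precisely what makes the two semi-decision procedures jointly exhaustive and hence yields a genuine decision procedure.
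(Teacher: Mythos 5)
Your proof is correct, and its skeleton --- induction on $\QW(X)$, the dichotomy of Lemma~\ref{lem:Generator}, Lemma~\ref{lem:SubshiftsPi} to keep subshifts $\Pi^0_1$, and pairing a $\Pi^0_1$ semidecision for non-membership with a compactness-based semidecision for membership --- is the same as the paper's. The genuine difference is how the generator case is handled. The paper argues per input word $u$: it first uses the induction hypothesis to decide whether $u$ lies in some proper subshift, semidecides $u \not\sqsubset X$, and in the remaining case exploits that forbidding $w$ lands inside the union of proper subshifts to conclude $w \leq_X u$ (the generator occurs within a bounded window around every occurrence of $u$), which is then witnessed in an SFT approximation. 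You instead test the relation in the opposite direction, $u \leq_X w$, which is literally the defining property of a generator, and semidecide it via Lemma~\ref{lem:LeqSemidecidable}. This buys two things: once a generator exists you do not need the induction hypothesis at all (you only invoke it when $X$ is the union of its proper subshifts), and your positive test is self-certifying --- if the procedure of Lemma~\ref{lem:LeqSemidecidable} halts on the pair $(u,w)$, then every long enough word of the found SFT approximation, hence of $X$, with $w$ centered contains $u$, and such words of $X$ exist since $w \sqsubset X$, so $u \sqsubset X$; therefore running the two semiprocedures in parallel is sound with no case analysis on where $u$ sits. (This observation is also the one technical point you should make explicit: Lemma~\ref{lem:LeqSemidecidable} is stated for words of $\B(X)$, and you apply it with a first argument not known to be in $\B(X)$; the remark just made, which is exactly your ``converse'' direction, shows the procedure cannot halt spuriously when $u \not\sqsubset X$.) By contrast, the paper's witness (``$w$ occurs near every occurrence of $u$ in an SFT approximation'') holds vacuously in high-order approximations once $u \notin \B(X)$, so its correctness rests on the surrounding case analysis rather than on the certificate alone; what the paper's per-word organization buys in return is that it is the template reused for the stronger Theorem~\ref{thm:TupleTheorem}, where disposing of the proper subshifts by induction first is genuinely needed.
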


\begin{proof}
We proceed by induction on $\QW(X)$. First, if $\QW(X) = 1$, then $X = \emptyset$ and there is nothing to prove. Suppose then that $\QW(X) > 1$, and let $u \in S^*$. We have to decide whether $u \sqsubset X$. By the induction hypothesis and the previous lemma, for any proper subshift $Y \subsetneq X$ it is decidable whether $u \sqsubset Y$. If $u \sqsubset Y$ for some proper subshift, then the algorithm may conclude $u \sqsubset X$. Also, if $u \not\sqsubset X$, we can semidecide this by enumerating the forbidden words of $X$ by the assumption that $X$ is $\Pi^0_1$. The case that is left is that $u \sqsubset X$ and $u \not\sqsubset Y$ for any proper subshift $Y \subset X$.

In such a case, the union $Y$ of all proper subshifts of $X$ does not equal $X$. By Lemma~\ref{lem:Generator}, there exists a generator $w \sqsubset X$ such that $w \sqsubset x \in X$ implies that $x$ is weakly transitive in $X$. We may assume this $w$ is known to the algorithm, by way of a look-up table. Since forbidding $w$ results in a proper subshift of $X$, it must result in a subshift of $Y$. Thus, by compactness, there exists $n \in \N$ such that $x_{[0, |u|-1]} = u$ implies $x_{[i, i+|w|-1]} = w$ for some $i \in [-n, n]$, for all $x \in X$. This must also hold in a suitable SFT approximation of $X$, obtained by running the Turing machine defining $X$ for sufficiently many steps. Once such $n$ and an SFT approximation $Z$ is found, the algorithm can conclude $u \sqsubset X$. 
\end{proof}

\begin{remark}
An important point is that the generator $w$ was needed in the algorithm, and one cannot compute it directly from a Turing machine defining $X$ (as one can verify by an easy counterexample). Indeed, one can hardly compute anything from such a representation, because it cannot even be decided if the Turing machine defines an empty subshift. Thus, the result is not uniform in $X$. However, like in the case of Theorem~\ref{thm:MinimalRecursive} (where we supplied the information of whether the subshift is empty), the algorithm is at least somewhat uniform, as we only need the lists of forbidden words (the lists $(v_1, \ldots, v_k)$ given by Lemma~\ref{lem:SubshiftsPi}) defining the proper subshifts of $X$ and the lists of generators for all subshifts of $X$. An easy example shows that in the case of general quasiminimal subshifts, knowing whether the subshift is empty is \emph{not} enough to determine whether a given word $u$ is in the subshift. However, it is an interesting question exactly what ``shape information'' is needed.
\end{remark}

A trivial corollary of the previous result and Lemma~\ref{lem:SubshiftsPi} is that recursive quasiminimal subshifts are \emph{hereditarily recursive}, that is, all their subshifts are recursive.

\begin{corollary}
\label{cor:SubRecursive}
All subshifts of a recursive quasiminimal subshift $X$ are recursive.
\end{corollary}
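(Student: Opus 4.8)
The plan is to derive Corollary~\ref{cor:SubRecursive} as an immediate consequence of the two results just proved: Theorem~\ref{thm:Recursive}, which says a quasiminimal $\Pi^0_1$ subshift is recursive, and Lemma~\ref{lem:SubshiftsPi}, which says every subshift of a $\Pi^0_1$ quasiminimal subshift is itself $\Pi^0_1$. So the only real content is a short chain of implications, and the only thing to be careful about is that each hypothesis of those two lemmas is actually available.

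First I would observe that a recursive quasiminimal subshift $X$ is in particular $\Pi^0_1$: if its language $\B(X)$ is recursive, then $S^* \setminus \B(X)$ is recursive, hence certainly $\Sigma^0_1$, so by the equivalence lemma characterizing $\Pi^0_1$ subshifts (or simply by definition, since the language is $\Delta^0_1 \subset \Pi^0_1$), $X$ is $\Pi^0_1$. Next, let $Y \subset X$ be any subshift. Since $X$ is quasiminimal, so is $Y$ — a subshift of $X$ has at most as many subshifts as $X$, hence finitely many — so $Y$ is a quasiminimal subshift. Moreover, since $X$ is $\Pi^0_1$ and quasiminimal, Lemma~\ref{lem:SubshiftsPi} applies and gives that $Y$ is $\Pi^0_1$ (concretely, $Y$ is obtained from $X$ by forbidding finitely many extra words $v_1, \ldots, v_k$, so a Turing machine can enumerate the forbidden words of $Y$ by first listing the $v_i$ and then the forbidden words of $X$).

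Finally I would apply Theorem~\ref{thm:Recursive} to $Y$: it is a $\Pi^0_1$ quasiminimal subshift, hence recursive. Since $Y \subset X$ was arbitrary, every subshift of $X$ is recursive, which is the claim.

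I do not expect any genuine obstacle here; the statement is explicitly flagged in the text as ``a trivial corollary of the previous result and Lemma~\ref{lem:SubshiftsPi}''. The only point requiring a word of care is the non-uniformity inherited from Theorem~\ref{thm:Recursive}: the recursiveness of each $Y$ is witnessed by an algorithm that depends on $Y$ (through the generators of its subshifts and the finite lists of forbidden words), not computed uniformly from a presentation of $X$. But since the corollary only asserts the existence, for each subshift $Y$, of a decision procedure for $\B(Y)$, this non-uniformity is harmless and need not even be mentioned in the proof itself; the remark following Theorem~\ref{thm:Recursive} already addresses it.
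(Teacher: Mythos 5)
Your proof is correct and follows exactly the route the paper intends: a subshift $Y$ of a recursive quasiminimal $X$ is quasiminimal and, by Lemma~\ref{lem:SubshiftsPi}, $\Pi^0_1$, so Theorem~\ref{thm:Recursive} gives recursiveness. The paper treats this as an immediate consequence of those two results, just as you do, and your remark on non-uniformity is a correct (and optional) observation.
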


Note that in the previous section, we essentially defined our $\Pi^0_1$ subshifts by giving a computational process that outputs a point, and we then showed that one has an algorithm for recognizing whether a given word occurs in the point. Just the fact that the point was given by a computational process does not automatically imply that there is a such an algorithm, but it does automatically follow that the subshift has at least a recursively enumerable language. Thus, we could not have hoped to find a $\Pi^0_1$ quasiminimal subshift which is not recursive with this technique, and indeed, by Theorem~\ref{thm:Recursive}, there is no such subshift. We can strengthen this observation slightly, and prove Proposition~\ref{prop:WeakCharacterization}, which states that all quasiminimal $\Pi^0_1$ subshifts are obtained as orbit closures of computable points. We first need a few lemmas.

\begin{lemma}
Let $X$ be a subshift whose language is recursively enumerable. Then the computable points are dense in $X$.
\end{lemma}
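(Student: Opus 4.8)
The plan is to build, for each point $x \in X$, a nearby computable point by interpolating: fix a long central pattern $w = x_{[-n,n]}$ and produce a single computable point of $X$ whose central pattern is $w$; since the cylinders $[w]_{-n}$ form a neighborhood basis of $x$ in $S^\Z$, and each such cylinder meets $X$, density follows once we can computably extend any $w \sqsubset X$ to a full computable point of $X$.

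First I would use the hypothesis that $\B(X)$ is recursively enumerable. Fix a Turing machine enumerating $\B(X)$. Given a word $w \sqsubset X$, I want to build a computable point $x \in X$ with $x_{[-|w|/2,\ldots]} = w$ in the obvious centered sense. The construction is by greedy dovetailed search: maintain a current central word $w^{(0)} = w$, and at stage $k$ search (by running the enumeration of $\B(X)$ long enough) for the lexicographically least one-letter-both-sides extension $a w^{(k)} b$ with $a,b \in S$ that lies in $\B(X)$; set $w^{(k+1)} = a w^{(k)} b$. Such an extension always exists because $\B(X)$ is extendable (it is the language of a nonempty subshift, so every $w^{(k)} \sqsubset X$ extends on both sides inside $\B(X)$), and the search terminates because $\B(X)$ is r.e. and some valid extension exists, so the enumeration will eventually list one; among those listed up to a growing time bound we pick the lexicographically least, and a standard argument (run until the lex-least candidate so far stabilizes, which it must once the true lex-least extension has appeared) makes the choice well-defined and computable. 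The limit $x = \lim_k w^{(k)}$ is then a well-defined point of $S^\Z$, and $x_i$ is computed by running the construction until $|w^{(k)}|$ exceeds $2|i|+1$.

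Next I would check $x \in X$. Each $w^{(k)}$ is in $\B(X)$ by construction, hence every central subword of $x$ is in $\B(X)$; since $X$ is closed and determined by its language (a subshift is exactly the set of points all of whose subwords lie in $\B(X)$, using that $\B(X)$ is factor-closed), we get $x \in X$. Then $x \in [w]_{i_0}$ for the appropriate offset $i_0$ (the offset at which $w$ sits inside $x$ is fixed from the start), so $x$ lies in the prescribed basic cylinder. As $w$ ranges over arbitrarily long central patterns of an arbitrary $y \in X$, the resulting computable points converge to $y$, giving density.

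The main obstacle is making the greedy extension step genuinely computable: with only an enumeration of $\B(X)$ (not a decision procedure) we cannot decide which extensions are legal, only semi-decide. The fix, as sketched above, is that we do not need to decide legality of all candidates — we only need to \emph{find one} legal extension and, to keep the point well-defined, commit to the lex-least one \emph{among those eventually found}; since a legal extension provably exists, the search halts, and the lex-least-so-far value is monotone-stabilizing, hence computable in the limit. One should be slightly careful that extendability on both sides simultaneously holds: from $w^{(k)} \sqsubset X$ pick $x' \in X$ with $w^{(k)} \sqsubset x'$ and read off $a w^{(k)} b \sqsubset x' \subset \B(X)$. No appeal to recursiveness of $\B(X)$ or to quasiminimality is needed; r.e.\ language plus nonemptiness (implicit, since otherwise density is vacuous) suffices.
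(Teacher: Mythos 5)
Your overall strategy is the same as the paper's: greedily extend a given word $u \sqsubset X$ on both sides using an enumeration of $\B(X)$, and take the limit point. The gap is precisely in the step you flag yourself, namely making the choice of extension effective. Selecting \emph{the} lexicographically least legal extension cannot be done from a mere enumeration of $\B(X)$: ``run until the lex-least candidate so far stabilizes'' is not an effective stopping rule, since you can never certify that no lex-smaller extension will be enumerated later, and ``computable in the limit'' is strictly weaker than computable. Nor is this only a presentational issue: the point determined by the true lex-least greedy choices can genuinely fail to be computable when $\B(X)$ is only recursively enumerable. For instance, one can build a subshift with r.e.\ language whose points carry position markers $20^a2$ each followed by a letter from $\{4,5\}$, where the letter $4$ is permitted after the marker of index $a$ exactly when $a$ lies in a fixed non-recursive r.e.\ set $A$; the language remains r.e., but the lex-least point threading the marker structure reads off the characteristic function of $A$. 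So, as written, your stage-$k$ selection is neither an algorithm nor guaranteed to define a computable point.

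The repair is the other half of the sentence you already wrote: you only need to find \emph{some} legal extension, so commit to the first extension $a w^{(k)} b$ that the enumeration certifies (breaking simultaneous discoveries by lex order, say). That selection rule is fully effective, the search halts because a legal extension exists by extendability of $\B(X)$ (exactly as you argue), and the rest of your proof goes through verbatim: each $w^{(k)} \in \B(X)$, hence every subword of the limit $x$ lies in $\B(X)$, hence $x \in X$, and $x$ lies in the prescribed cylinder, giving density. For comparison, the paper's one-line proof also says ``lexicographically minimal extension''; in the context where the lemma is applied the subshift is recursive (Theorem~\ref{thm:Recursive}, Corollary~\ref{cor:SubRecursive}), where that choice is effective, and for the statement with merely r.e.\ language it should be read with the first-certified-extension rule just described.
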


\begin{proof}
Given any word $u \sqsubset X$, one can construct a Turing machine that repeatedly extends $u$ by every possible letter in both directions and chooses the lexicographically minimal extension that still leads to a word that occurs in $X$. This gives the point and algorithm required in the definition of computability. 
\end{proof}

\begin{lemma}
If $X$ is a $\Pi^0_1$ quasiminimal subshift, then it is a finite union of orbit closures of computable points.
\end{lemma}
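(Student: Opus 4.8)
The plan is to reduce the statement to the case of a single point's orbit closure and then invoke density of computable points. First I would observe that $X$ has recursive language by Theorem~\ref{thm:Recursive}, hence so does every subshift of $X$ by Corollary~\ref{cor:SubRecursive}; in particular every $\OC{x}$ with $x \in X$ is a recursive quasiminimal subshift (it is quasiminimal because its subshifts are subshifts of $X$, of which there are finitely many). Since $X = \bigcup_{x \in X} \OC{x}$ and $X$ is quasiminimal, only finitely many distinct subshifts $Z_1, \dots, Z_m$ arise as such orbit closures, so it suffices to show that each $Z_j$ is the orbit closure of a computable point.

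For this, fix $x \in X$ and put $Z = \OC{x}$. As $Z$ is the smallest subshift containing $x$, the point $x$ lies in no proper subshift of $Z$, so $Z$ is not the union of its proper subshifts; Lemma~\ref{lem:Generator} then yields a generator $w \sqsubset Z$. By the observation following that lemma, every $y \in Z$ with $w \sqsubset y$ is weakly transitive in $Z$, and hence satisfies $\OC{y} = Z$. Now $\{y \in Z \;|\; w \sqsubset y\}$ is a nonempty open subset of $Z$, and $Z$ has recursive (in particular recursively enumerable) language, so the preceding lemma provides a computable point $y$ in it, which then has $\OC{y} = Z$. Applying this to each $Z_j$ gives computable points $y_1, \dots, y_m$ with $X = \OC{y_1} \cup \dots \cup \OC{y_m}$ (with the empty subshift corresponding to the empty such union).

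I expect the only delicate point to be the passage from the generator $w$ to a \emph{computable} point containing it: the generator is guaranteed to exist but cannot in general be extracted algorithmically from a presentation of $X$, so one must be content with its mere existence and then rely on the fact that greedily extending $w$ to both sides by lexicographically least letters that keep the word inside the recursive language $\B(Z)$ produces a genuine computable point of $Z$ — which is exactly what the preceding density lemma packages. An alternative route, avoiding the identity $X = \bigcup_{x} \OC{x}$, is induction on $\QW(X)$: if $X$ is the union of its (finitely many) proper subshifts, apply the induction hypothesis to each, since each is $\Pi^0_1$ and quasiminimal with strictly smaller $\QW$ (using Lemma~\ref{lem:SubshiftsPi}); otherwise $X$ itself has a generator and the construction above produces a single computable point whose orbit closure is $X$.
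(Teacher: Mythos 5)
Your proof is correct, but it takes a genuinely different route from the paper's. The paper argues by contradiction: starting from an arbitrary computable point, it repeatedly picks a word of $X$ not yet covered and a computable point containing it (via the density lemma), producing a strictly increasing chain $Y_1 \subsetneq Y_2 \subsetneq \cdots$ of subshifts, which is impossible since a quasiminimal subshift has only finitely many subshifts; no generators are needed. You instead argue directly: since $X$ has finitely many subshifts, only finitely many distinct orbit closures $Z_j = \OC{x}$ occur, each $Z_j$ is quasiminimal and (being a principal orbit closure) not the union of its proper subshifts, so Lemma~\ref{lem:Generator} gives a generator $w_j$, and the density lemma (applicable because $\B(Z_j)$ is recursive by Theorem~\ref{thm:Recursive} and Corollary~\ref{cor:SubRecursive}) yields a computable point containing $w_j$, which is weakly transitive in $Z_j$ and hence has orbit closure exactly $Z_j$. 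Both arguments rest on the same two pillars (finiteness of the subshift lattice and density of computable points in a subshift with recursively enumerable language), but yours is constructive in structure and gives slightly more: each principal subshift $\OC{x}$, $x \in X$, is itself the orbit closure of a computable point, the number of pieces is bounded by $\QW(X)$, and since $\B(x) = \B(\OC{x})$ your decomposition essentially yields Proposition~\ref{prop:WeakCharacterization} in one stroke; the paper's chain argument is shorter and avoids invoking Lemma~\ref{lem:Generator} altogether. Your closing remark on non-uniformity is apt but not needed, since the statement is purely existential; the alternative induction on $\QW(X)$ you sketch also works.
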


\begin{proof}
Suppose that the claim fails for some $\Pi^0_1$ quasiminimal subshift $X$. Enumerate the words of $X$ as $u_1, u_2, \ldots$. Let $x_1$ be a arbitrary computable point in $X$. Inductively, let $u_{k(j)}$ be the first word on the list $u_1, u_2, \ldots$ such that $u_{k(j)} \not\sqsubset Y_{j-1} = \OC{x_1} \cup \OC{x_2} \cup \cdots \cup \OC{x_{j-1}}$. Such $u_{k(j)}$ can always be found, since otherwise $X = Y_{j-1}$, a contradiction. Let then $x_j$ be a computable point containing $u_{k(j)}$. It follows that $Y_1 \subsetneq Y_2 \subsetneq \cdots$ is an infinite increasing sequence of subshifts of $X$, which contradicts quasiminimality. 
\end{proof}

The following proposition is subtly stronger.

\begin{proposition}
\label{prop:WeakCharacterization}
If $X$ is a $\Pi^0_1$ quasiminimal subshift, then there exists a finite set $\{x_1, x_2, \ldots, x_k\}$ of computable points with distinct languages such that for every $x \in X$ we have $\B(x_i) = \B(x)$ for some $i \in [1, k]$.
\end{proposition}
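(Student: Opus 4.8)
The plan is to reduce the proposition to one self-contained fact: every recursive weakly transitive subshift contains a computable weakly transitive point. Granting this, the proposition follows quickly. For any $x \in X$ the language $\B(x)$ is factor-closed and extendable, so $\B(\OC{x}) = \B(x)$ and $x$ is a weakly transitive point of $\OC{x}$; hence the distinct languages $\B(x)$, $x \in X$, are precisely the languages of the weakly transitive subshifts of $X$. Since $X$ is quasiminimal it has only finitely many subshifts, so there are only finitely many such languages, namely those of weakly transitive subshifts $Y_1, \ldots, Y_k \subseteq X$ (take $k = 0$ if $X = \emptyset$). By Lemma~\ref{lem:SubshiftsPi} each $Y_i$ is $\Pi^0_1$, hence recursive by Theorem~\ref{thm:Recursive}, and applying the self-contained fact to each $Y_i$ yields computable points $x_i \in Y_i$ with $\B(x_i) = \B(Y_i)$. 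These have pairwise distinct languages, and every $x \in X$ satisfies $\B(x) = \B(Y_i)$ for some $i$, which is the claim.

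It remains to prove the self-contained fact. Let $Y$ be a recursive weakly transitive subshift and enumerate $\B(Y) = w_1, w_2, \ldots$ (possible since $\B(Y)$ is recursive). I would build a nested sequence of words $u^{(0)} \sqsubset u^{(1)} \sqsubset \cdots$ in $\B(Y)$: fix $u^{(0)}$ to occupy coordinates $[0, |u^{(0)}|-1]$, and given $u^{(n)}$ choose $u^{(n+1)} \in \B(Y)$ that contains $u^{(n)}$ at its fixed position in $\Z$, with at least one extra symbol on each side, and with $w_n \sqsubset u^{(n+1)}$. Such $u^{(n+1)}$ always exists: fixing any (possibly noncomputable) weakly transitive point $z \in Y$, it contains both $u^{(n)}$ and $w_n$, so a sufficiently large window of $z$ that contains the chosen occurrence of $u^{(n)}$ with at least one symbol of $z$ on each side of it and also contains $w_n$ is a word of $\B(Y)$ with the required properties. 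As $\B(Y)$ is recursive, such a word is found by exhaustive search, so the construction is effective. The left (resp. right) endpoints of the $u^{(n)}$ tend to $-\infty$ (resp. $+\infty$), so $x := \lim_n u^{(n)}$ is a well-defined bi-infinite point; it lies in $Y$ and is computable (to compute $x_i$, run the construction until coordinate $i$ is covered); and every $w_n$ occurs in $x$, so $\B(x) = \B(Y)$, i.e. $x$ is a computable weakly transitive point of $Y$.

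The crux is this last construction, and in particular the verification that the greedy search never stalls: weak transitivity together with a compactness argument must always supply an admissible extension that simultaneously stays in $\B(Y)$, grows on both sides, and absorbs $w_n$. This is exactly the extra content beyond the earlier observation that computable points are merely dense; the coordinate bookkeeping ensuring the limit is a genuine bi-infinite point needs a little care but is routine. Everything else follows directly from the definition of quasiminimality, Lemma~\ref{lem:SubshiftsPi}, and Theorem~\ref{thm:Recursive}.
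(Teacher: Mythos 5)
Your argument is correct, but it takes a genuinely different route from the paper. The paper proves the statement via two lemmas and a final contradiction: first, computable points are dense in any subshift with recursively enumerable language; second, a $\Pi^0_1$ quasiminimal subshift is a \emph{finite union of orbit closures} of computable points (proved by a chain argument: if not, one could build an infinite strictly increasing sequence of subshifts, contradicting quasiminimality); finally, supposing some $x \in X$ had a language different from all the chosen points, it forbids suitable words $v_i$ and picks a word $w \sqsubset x$ containing suitable $u_i$ to contradict the second lemma applied to the resulting subshift $Y$. You instead observe that the languages of points of $X$ are exactly the languages of the weakly transitive subshifts of $X$ (of which there are finitely many by quasiminimality, each recursive by Lemma~\ref{lem:SubshiftsPi} and Theorem~\ref{thm:Recursive}), and then prove a single stronger lemma: a weakly transitive subshift with recursive language contains a \emph{computable weakly transitive point}, built by an anchored greedy construction that alternately absorbs the next word of the language and grows on both sides, the search never stalling because windows of a (possibly noncomputable) weakly transitive point witness the needed extensions. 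This gives exact-language representatives for each relevant subshift directly and eliminates the paper's final separation argument; your key lemma is also a reusable general fact needing no quasiminimality (and in fact an r.e.\ language suffices for the search, since one may simply wait for a suitable word to be enumerated). What the paper's route buys in exchange is that it only needs the weaker density lemma, at the cost of using quasiminimality twice (the chain argument and the concluding contradiction). Both routes rest on Theorem~\ref{thm:Recursive} to know the languages involved are (at least) recursively enumerable, so neither is more economical in its prerequisites; your proof is, if anything, slightly more constructive in what the points $x_i$ are.
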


\begin{proof}
A $\Pi^0_1$ quasiminimal subshift has finitely many subshifts, which are all $\Pi^0_1$ and quasiminimal. We apply the previous lemma to each of them, and take the union of the finitely many sets of points obtained. To obtain distinct languages, repeatedly remove $x_j$ from the set if $\B(x_i) \neq \B(x_j)$ for some $i \neq j$. Now, suppose that there is a point $x \in X$ such that $\B(x_i) \neq \B(x)$ for all $i \in [1, k]$. Then, for each $i$, either $x$ contains a word $u_i$ such that $u_i \not\sqsubset x_i$, or $x_i$ contains a word $v_i$ such that $v_i \not\sqsubset x$. Define a subshift $Y$ of $X$ by forbidding the $v_i$, and choose a subword $w$ of $x$ containing all the $u_i$. Then, $x$ is a point of $Y$ containing a word $w$ which is not in any of the $x_i$. This is a contradiction, since we applied the previous lemma also to $Y$. 
\end{proof}

This proposition suggests a small tangent to the discussion. The following proposition shows that the previous result is not a characterization of $\Pi^0_1$ quasiminimal subshifts, and also shows that there is no symmetric version of Theorem~\ref{thm:MinimalRecursive} for ``$\Sigma^0_1$ subshifts''. The result is presumably well-known, but we are not aware of a reference for it.

\begin{proposition}
There exists a computable point $x \in \{0,1\}^\Z$ such that $\OC{x}$ is minimal and not $\Pi^0_1$.
\end{proposition}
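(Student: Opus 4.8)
The plan is to realize $x$ as a computable Toeplitz sequence whose language codes a non-recursive recursively enumerable set. It suffices to build a computable point $x \in \{0,1\}^\Z$ that is Toeplitz and has non-recursive language $\B(x)$: a Toeplitz sequence is uniformly recurrent, so $\OC{x}$ is minimal by Theorem~7 of \cite{Au88}; and since $x$ is computable, $\B(\OC{x}) = \B(x)$ is recursively enumerable (enumerate the subwords of $x_{[-n,n]}$ over all $n$), hence $\OC{x}$ is $\Sigma^0_1$, and a $\Sigma^0_1$ language that is not recursive cannot be $\Pi^0_1$ (else it would lie in $\Delta^0_1$). In particular this shows that the minimality hypothesis in Theorem~\ref{thm:MinimalRecursive} cannot be weakened to ``$\Sigma^0_1$''.

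Fix a recursively enumerable non-recursive set $A \subseteq \N$; being infinite, it has a computable injective enumeration $a_0, a_1, a_2, \ldots$, and for $n \in \N$ put $w_n = 0\,1^{n+3}\,0$. I would build $x$ by the standard stagewise Toeplitz construction: maintain periodic skeletons $S_k \in \{0,1,?\}^{p_k}$ with $p_k \mid p_{k+1}$ and $S_{k+1}$ refining $S_k$, so that $x = \lim_k S_k$ once every coordinate carries a value in $\{0,1\}$. Most stages are \emph{generic}, filling coordinates so that every coordinate is eventually filled and the filled part contains no run of three $1$'s except inside the blocks placed at coding stages. The other stages are \emph{coding stages}, one per $n$ and visited infinitely often in round-robin order; at the coding stage for $n$ occurring at stage $k$, a reserved block $R_n$ of $?$-coordinates flanked by $0$'s (of length $n+3$, replicated in every period) is available, and: if $n \in \{a_0, \ldots, a_k\}$ we fill $R_n$ with $1^{n+3}$, thereby fixing the factor $0\,1^{n+3}\,0$ on an arithmetic progression of coordinates of $x$; otherwise we fill $R_n$ generically (runs of $1$'s of length at most $2$) and reserve a fresh block $R_n'$ of the same shape for the next coding stage for $n$.

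Granting the bookkeeping, the verification is routine. First, $x$ is computable: coordinates are assigned in a controlled order and each stage performs only finitely much work (finitely many steps of the enumeration of $A$), so the stage at which $x_i$ is decided, and the value $x_i$, are computable from $i$. Second, $x$ is Toeplitz by design --- every coordinate is eventually filled and thereafter fixed periodically modulo the current period --- hence uniformly recurrent, so $\OC{x}$ is minimal. Third, $w_n \sqsubset x$ if and only if $n \in A$: if $n = a_k \in A$, then $w_n$ is written into $x$ at the first coding stage for $n$ after stage $k$ and is never erased; conversely $w_n = 0\,1^{n+3}\,0$ can occur in $x$ only as a $0$-delimited maximal run of $1$'s of length $n+3 \ge 3$, and the only maximal runs of length $\ge 3$ in $x$ are those placed at coding stages, of length $m+3$ with $m \in A$, so $w_n \sqsubset x$ forces $n \in A$. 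Since $n \mapsto w_n$ is computable and $A$ is not recursive, $\B(x)$ is not recursive.

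The main obstacle is the bookkeeping of this Toeplitz construction: one must interleave the generic filling with the infinitely many per-$n$ coding rounds so that no coordinate is starved of an eventual value --- in particular each reserve $R_n$, refreshed infinitely often when $n \notin A$, must be filled at a later stage --- while never committing to the appearance of $w_n$ before $n$ is enumerated into $A$, and while keeping the periods $p_k$ and all skeletons uniformly computable. This is a standard amalgamation: fix the shape of $R_n$ from its first appearance, and let $p_{k+1}/p_k$ grow fast enough to absorb all currently active reserves together with room for the generic filling.
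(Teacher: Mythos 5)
Your high-level reduction is the same as the paper's and is fine: it suffices to produce a computable, uniformly recurrent $x \in \{0,1\}^\Z$ whose language is recursively enumerable but not recursive, since then $\OC{x}$ is minimal and its language, being $\Sigma^0_1$ but not recursive, cannot be $\Pi^0_1$. The fatal problem is the coding, not the deferred bookkeeping. If $x$ is Toeplitz and contains the symbol $0$ at some position, then by definition that $0$ recurs along an arithmetic progression of some period $p$, so every run of $1$s in $x$ has length less than $p$; more generally, if $x$ is merely uniformly recurrent and contains a $0$, the symbol $0$ occurs syndetically, with the same consequence. Hence only finitely many of your marker words $w_n = 0\,1^{n+3}\,0$ can occur in $x$, so $\{n \;|\; w_n \sqsubset x\}$ is finite and can never equal the infinite non-recursive set $A$. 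Run-length coding is thus incompatible with minimality itself. The same obstruction shows up inside your construction: once any coordinate has been filled (periodically, with the then-current period $p_k$), the unfilled coordinates never again contain $p_k$ consecutive positions, because filled positions recur with period $p_k$ and are never erased; so a reserved block $R_n$ of $n+3$ consecutive $?$-coordinates cannot exist once $n+3 \geq p_k$, no matter how fast the later periods grow. The ``standard amalgamation'' you appeal to is therefore impossible, not routine, and the claimed equivalence $w_n \sqsubset x \iff n \in A$ cannot be arranged.

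For comparison, the paper's construction avoids exactly this trap: it builds nested two-element word sets $W_i$, each word of $W_{i+1}$ a concatenation of the words of $W_i$ containing both of them (which forces minimality), and it detects the halting of the $j$th machine not by whether some arbitrarily long fixed word occurs, but by whether a \emph{bounded} power ($w_2^4$, where $W_{j-1} = \{w_1,w_2\}$) of a stage-dependent word occurs -- a condition compatible with uniform recurrence. The price is that when a machine halts late, the already-announced words must be ``covered up'' by redefining $W_k$ so that everything output so far remains in the language, which keeps the language $\Sigma^0_1$ but yields only a Turing (not many-one) reduction, as the paper notes. Any repair of your argument needs a device of this kind: the non-recursiveness has to be encoded in patterns whose sizes at each stage stay compatible with syndetic recurrence, rather than in unboundedly long runs whose presence is decided arbitrarily late.
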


\begin{proof}
We only need to describe a minimal subshift whose subwords are recursively enumerable, as such a subshift contains a computable point, and every point in a minimal subshift generates it.

Let $W_0 = \{0, 1\}$. For each $i$, we will define $W_i$ to be a set of two words of the same length such that every (bi-)infinite concatenation of words of $W_i$ has a unique partition into words of $W_i$, and both words of $W_i$ are contained in both words of $W_{i+1}$. For any list of sets with these properties, 
we obtain a minimal subshift by taking the limit points of the words $W_i$ in the obvious way. We denote by $\ell_i$ be maximal number such that for some $w \in W_i$, the word $w^{\ell_i} \in X$ occurs in some concatenation of words of $W_{i+1}$. We make sure $\ell_i$ is always finite, and strictly smaller than the length of the words $w \in W_{i+1}$. We will not define these words by a direct induction, but by a computational process, and the process may change the definition of $W_i$ arbitrarily late in the process of constructing these sets -- the reader should think of the $W_i$ as variables in the programming sense of the word in the following.

The property of unique partitions is clearly true for $W_0 = \{0, 1\}$. If $W_i = \{w_1, w_2\}$ (where $w_1 < w_2$ in lexicographical order) satisfies this property, and $u$, referred to as the \emph{cover-up}, is any concatenation of words in $W_i$, then we set $W_{i+1} = \{w_1^{i} w_2^{q-i} w_1 u \;|\; i \in \{1, 2\}\}$ where $q = 2|u|+3$. These words will have the property of unique partitions, contain both words of $W_i$, and certainly $\ell_i < |w_1 w_2^{q-1} w_1 u|$, as required. Note also that $\ell_i > |u|$, and if $u$ is the empty word $\lambda$, then $\ell_i = 3$.

Now, we describe the algorithm that constructs the sets $W_{i+1}$. One $i$ at a time, the algorithm sets $W_{i+1} = \{u_1, u_2\}$, where $u_1$ and $u_2$ are any two words constructed as above from the words of $W_i$ and the cover-up $u = \lambda$. Every time it computes a new set $W_i$, it outputs the words of $W_i$, and all their subwords, in lexicographical order. While constructing these words, the algorithm dovetails the computation of all Turing machines $T_1, T_2, \ldots$. If the $k$th machine halts and the machine has computed the sets $W_i$ for $i \leq K$ (where we may assume $K > k$), then the algorithm changes the definition of $W_k$: If $W_{k-1} = \{w_1, w_2\}$ and $W_K = \{u_1, u_2\}$ (and $w_1 < w_2$ and $u_1 < u_2$ in lexicographical order), let the cover-up be $u = u_1u_2$, and define $W_k$ as in the previous paragraph, so that every word of the new set $W_k$ contains the word $u$. Furthermore, since we previously had $\ell_{k-1} \leq |w|$ for $w \in W_k$, the new value of $\ell_{k-1}$ is strictly larger, as $\ell_{k-1} > |u|$, $u = u_1u_2$ and $u_i \in W_K$.

The crucial point is that since the algorithm changed its mind about the contents of $W_k$, all the subwords of words in $W_i$ for $i \geq k$ it has output sofar have been wrong, in the sense that the values of the sets $W_i$ were not correct -- however, the new value of $W_k$ conveniently covers up these lies\footnote{Of course, the \emph{words} that have been output are not really lies, in the sense that they are indeed words of the subshift $X$ we eventually obtain. However, the reader can probably agree that the algorithm has at least not been outputting subwords in a very honest order.} due to the choice of $u$: every word it has output sofar is in fact a subword of every word in the updated set $W_k$. Thus, the algorithm can forget the sets $W_i$ for $i > k$ it had computed previously, and continue its computation from the new set $W_k$.

It is clear that the sets $W_j$ all converge, as once those Turing machines $T_i$ with $i \leq j$ have halted which will eventually halt, the contents of $W_j$ will never change. The sets obtained in the limit still satisfy the properties we asked for, and thus the limit of these sets of words is a minimal subshift by a standard argument. The subshift has a recursively enumerable language by construction.

Finally, let us show the language of the subshift is not $\Pi^0_1$. For this, simply observe that if we had a decision algorithm for the language of $X$, we could decide, for each $W_i$ one by one, whether the machine $i$ ever halts. Namely, assuming we know whether the machine $T_i$ halts for all $i < j$, we can determine whether $T_j$ eventually halts as follows: we run the algorithm described above until it has computed the sets $W_i$ for all $i < j$ (and recomputed them as many times as necessary, whenever the machines $T_i$ with $i < j$ halt). Let $W_{j-1} = \{w_1, w_2\}$, in lexicographical order. Now, the machine $T_j$ eventually halts if and only if $w_2^4$ occurs in the language of $X$. 
\end{proof}

We note that it does not follow from the previous proof that the language of the subshift constructed is $\Sigma^0_1$-complete for many-one reductions, only Turing reductions.

We can strengthen Theorem~\ref{thm:Recursive} further, and show that even the halting problem of a quasiminimal subshift is decidable (compare this with Proposition~\ref{prop:OneMinimal}). More precisely, we show that the model-checking problem for elementary piecewise testable languages is decidable, and the halting problem follows as a special case. First, we state the obvious.

\begin{lemma}
\label{lem:IsFinite}
The generating poset and subpattern poset of a quasiminimal subshift are finite.
\end{lemma}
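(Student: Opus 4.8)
The plan is to reduce both finiteness statements to the finiteness of the set of subshifts of $X$, which is the defining property of quasiminimality. First I would handle the subpattern poset, whose elements are points of $X$ modulo the equivalence $x \sim y \iff \B(x) = \B(y)$. The key observation is that the map $x \mapsto \OC{x}$ sends $\sim$-equivalence classes into the set of subshifts of $X$: if $\B(x) = \B(y)$ then $\OC{x} = \OC{y}$, since an orbit closure is determined by its language. Thus the number of $\sim$-classes is at most the number of subshifts of $X$, which is finite by quasiminimality. This also shows the pattern preorder descends to a \emph{finite} poset, and the order itself is inherited from containment of languages, hence from containment of the associated orbit closures.

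For the generating poset, whose elements are the words $\B(X)$ under $u \leq_X v$, I would argue as follows. For each word $v \in \B(X)$, forbidding $v$ from $X$ yields a subshift $X_v = \B^{-1}(\{w \in \B(X) : v \not\sqsubset w\}) \subset X$. By quasiminimality there are only finitely many distinct subshifts among $\{X_v : v \in \B(X)\}$; say they are $Z_1, \ldots, Z_N$. Now I claim $u \leq_X v$ depends only on the pair $(X_u, X_v)$ — more precisely, $u \leq_X v$ holds exactly when $X_v \not\subset X_u$ is false in the appropriate sense, i.e. when every point of $X$ containing $v$ also contains $u$, which is equivalent to $X \setminus X_u \subseteq X \setminus X_v$ at the level of which points are killed, hence to $X_v \supseteq$ (the points of $X$ not in $X_u$)… The cleanest formulation: $u \leq_X v$ iff no point of $X_v$-complement avoids $u$, equivalently iff $\{x \in X : v \sqsubset x\} \subseteq \{x \in X : u \sqsubset x\}$, and since $\{x : v \sqsubset x\} = X \setminus X_v$ and similarly for $u$, this is just $X_v \supseteq X_u$ — no, one must be slightly careful, but the upshot is that $u \leq_X v \iff X_u \subseteq X_v$ (the smaller you make the surviving subshift by forbidding $v$, the more words $v$ generates). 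Hence the generating preorder on $\B(X)$ factors through the finite map $u \mapsto X_u$ into $\{Z_1,\ldots,Z_N\}$, so the generating poset has at most $N$ elements.

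I expect the main obstacle to be pinning down the exact correspondence between $u \leq_X v$ and the inclusion of the forbid-subshifts $X_u, X_v$, and making sure the preorder really is \emph{antisymmetric after quotienting} in a way that matches the claim — one must check that two words $u, v$ with $X_u = X_v$ are genuinely $\leq_X$-equivalent (both directions), which follows since $X_u = X_v$ means $u$ and $v$ are forbidden together: a point avoids $u$ iff it avoids $v$, so $v \sqsubset x \Rightarrow u \sqsubset x$ and vice versa, giving $u \sim v$ in the generating order. Once this bookkeeping is in place, finiteness of both posets is immediate from $\QW(X) < \infty$. I would keep the write-up short, stating the two maps ($x \mapsto \OC{x}$ and $u \mapsto X_u$) and observing each has finite image bounded by $\QW(X)$.
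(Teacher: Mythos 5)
Your proposal is correct, and while your subpattern-poset half coincides with the paper's argument (the map $x \mapsto \OC{x}$; note that the identity $\B(\OC{x}) = \B(x)$ is also what gives injectivity on $\sim$-classes, which your counting implicitly uses), your treatment of the generating poset takes a genuinely different route. The paper fixes representatives $x_1, \ldots, x_k$ of the finitely many subpattern classes and associates to each word $u$ the occurrence vector $(b_1,\ldots,b_k)$ with $b_i = 1 \iff u \sqsubset x_i$, noting that equal vectors force $u \leq_X v \leq_X u$; this bounds the number of generating classes by $2^k$ and makes the second half of the lemma rest on the first. You instead send each word $u$ to the subshift $X_u = \{x \in X : u \not\sqsubset x\}$ and use the identity $u \leq_X v \iff X_u \subseteq X_v$, which is exactly right despite your mid-paragraph wavering -- it is literally the contrapositive of the definition of $\leq_X$, quantified over $x \in X$. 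This version is self-contained (it does not need the subpattern half), gives the sharper bound $\QW(X)$ instead of $2^k$, and actually proves slightly more: the quotient generating poset order-embeds into the lattice of subshifts of $X$ under inclusion. The two invariants carry the same information ($X_u$ is determined by which classes avoid $u$, and conversely), so the difference is one of packaging, but yours is the cleaner order-theoretic statement. Two points of hygiene for the write-up: define $X_u$ directly as $\{x \in X : u \not\sqsubset x\}$ rather than as $\B^{-1}(\{w \in \B(X) : u \not\sqsubset w\})$, since that language can be strictly larger than $\B(X_u)$ and need not be extendable, so the paper's $\B^{-1}$ convention does not quite apply (though the intended subshift is unambiguous); and replace the exploratory back-and-forth by a one-line statement and proof of the equivalence $u \leq_X v \iff X_u \subseteq X_v$.
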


\begin{proof}
Let $X$ be quasiminimal. If $x \in X$, then $Y = \OC{x}$ is a subshift of $X$ with $\B(Y) = \B(x)$, and $X$ has finitely many subshifts, so the subpattern poset is finite. Let $\{x_1, x_2, \ldots, x_k\} \subset X$ be a finite set of representatives such that $\forall x \in X: \exists i: \B(x) = \B(x_i)$.

The inequality $u \leq_X v$ means that whenever $v \sqsubset x$ for $x \in X$, then also $u \sqsubset x$. Associate to each $u \in \B(X)$ the tuple $B(u) = (b_1, b_2, \ldots, b_k) \in \{0, 1\}^k$ where $b_i = 1 \iff u \sqsubset x_i$. If $B(u) = B(v)$, then $u \leq_X v \leq_X u$ holds, so the generating poset of $X$ is finite. 
\end{proof}

\begin{theorem}
\label{thm:TupleTheorem}
If $X \subset S^\Z$ is a quasiminimal $\Pi^0_1$ subshift, then the model-checking problem of $X$ for elementary piecewise testable languages is decidable.
\end{theorem}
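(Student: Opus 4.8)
The plan is to induct on $\QW(X)$, using the structural dichotomy of Lemma~\ref{lem:Generator}. First I would put the problem in combinatorial form: since $X$ is recursive (Theorem~\ref{thm:Recursive}), we may pass to a block recoding and assume the given clopen partition $X = [C_1] \dot\cup \cdots \dot\cup [C_m]$ consists of cylinders, so that the instance for $L = a_1 S^* a_2 S^* \cdots S^* a_k$ becomes: given words $w_1, \dots, w_k \in \B(X)$ (the cylinder words of the chosen letters; if some $w_i \notin \B(X)$ the answer is trivially no), decide whether $X$ contains a point $x$ with occurrences of $w_1, \dots, w_k$ at strictly increasing coordinates — call such a tuple \emph{realizable in $X$}. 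For $\QW(X) = 1$ there is nothing to prove. If $X$ is the union of its (finitely many) proper subshifts $Y_1, \dots, Y_r$, then the tuple is realizable in $X$ iff it is realizable in some $Y_i$; each $Y_i$ is $\Pi^0_1$ by Lemma~\ref{lem:SubshiftsPi} and has $\QW(Y_i) < \QW(X)$, so we recurse — the algorithm being supplied, via a look-up table, with the forbidden-word lists defining the $Y_i$, exactly as in Theorem~\ref{thm:Recursive}.

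Now assume $X$ is not a union of proper subshifts, so by Lemma~\ref{lem:Generator} it has a generator $g$, and the union $Y$ of its proper subshifts is itself a proper subshift. A point avoiding $g$ lies in the proper subshift obtained by forbidding $g$, hence in $Y$; a point containing $g$ is weakly transitive, hence has language $\B(X) \supsetneq \B(Y)$ and cannot lie in $Y$. So $X \setminus Y$ is exactly the set of points containing $g$, all weakly transitive, and the tuple is realizable in $X$ iff it is realizable in $Y$ (decidable by induction) or realizable by a weakly transitive point. I would then prove the equivalence: the tuple is realizable by a weakly transitive point iff there is a legal word $z \in \B(X)$ in which $g$ occurs and $w_1, \dots, w_k$ occur at strictly increasing coordinates. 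Indeed any such $z$ extends to a point of $X$, which then contains $g$ (hence is weakly transitive) and carries the $w_i$ in order; conversely, a weakly transitive point realizing the tuple contains $g$ somewhere, and a sufficiently large finite window of it is such a $z$. (This subsumes the transitive case: when $X$ has a doubly transitive point every tuple of legal words is realizable, since all of $\B(X)$ is seen, in any order, in one tail.)

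It remains to decide whether such a word $z$ exists. This is semidecidable by enumerating $\B(X)$, so the content is to bound the length of a shortest witness $z^\ast$ by a computable function of $w_1, \dots, w_k$, $g$ and the structural data of $X$, after which one searches all legal words up to that bound. In $z^\ast$, mark the chosen occurrences of $g, w_1, \dots, w_k$; these carve $z^\ast$ into at most $2k+3$ ``filler'' blocks, and cutting symbols out of a filler block neither deletes a marked occurrence nor reorders them. The plan is to show by a pumping/compactness argument — exploiting the finiteness of the subpattern poset of $X$ (Lemma~\ref{lem:IsFinite}) and the fact that the filler blocks lying outside the span of the occurrences of $g$ sit inside the proper subshift obtained by forbidding $g$ — that a filler block longer than a computable threshold can be shortened while keeping $z^\ast$ legal, contradicting minimality.

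This last boundedness step is where I expect the real work to be: in an arbitrary subshift there is no bound on how long a transient between two prescribed patterns can be, so the bound must be squeezed out of quasiminimality, and I anticipate that making it rigorous requires an auxiliary structural fact — of the kind the paper develops ``only enough of'' — asserting that the gluing data between a quasiminimal subshift's proper subshifts and its weakly transitive points is finite and computable from the forbidden-word data. Granting that, the rest is bookkeeping: finitely many proper subshifts to recurse into, one generator to locate by look-up, and a bounded search over legal words containing $g$ with the $w_i$ in order.
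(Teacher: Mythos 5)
Your setup (induction on $\QW(X)$, recursing into the finitely many proper subshifts via Lemma~\ref{lem:SubshiftsPi}, the generator dichotomy of Lemma~\ref{lem:Generator}, and the reduction of realizability to the existence of a finite witness word $z \in \B(X)$ containing the tuple in order) matches the paper's frame, and the ``yes'' side is indeed semidecidable since $X$ is recursive by Theorem~\ref{thm:Recursive}. But the entire content of the theorem is the ``no'' side, and there your argument has a genuine gap: you explicitly defer the boundedness of a shortest witness to an unproven ``auxiliary structural fact'', and the pumping sketch you offer in its place does not work. Two concrete problems. First, the claim that filler blocks not overlapping an occurrence of the generator $g$ ``sit inside the proper subshift obtained by forbidding $g$'' is false: a word $u$ with $g \not\sqsubset u$ may still satisfy $g \leq_X u$, i.e.\ every point of $X$ containing $u$ contains $g$, so $u$ need not belong to the language of the $g$-forbidden subshift. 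This is exactly the situation in the hard case, where the generator occurs only once in the (isolated) weakly transitive point and long stretches of its tails pin down the position; it also happens in the paper's countable examples, where a single gap word $10^a1$ determines its location. Second, ``cutting symbols out of a filler block'' has no reason to preserve membership in $\B(X)$: in the constructions of Theorems~\ref{thm:TransitiveLocallyTestableLanguages} and~\ref{thm:CountableCodedLanguages} the lengths of the blocks between markers are rigidly forced, so a long legal witness cannot be locally shortened at all, and the existence of a \emph{different}, short witness is precisely what has to be proved, not something obtainable by local surgery. Finiteness of the subpattern poset alone does not yield a computable threshold, because the bound must work uniformly over infinitely many input tuples while the look-up table is finite.

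What the paper actually does at this point is finer than a witness-length bound. It fixes representatives $w_1, \ldots, w_k$ of the (finite, by Lemma~\ref{lem:IsFinite}) generating poset and, for an arbitrary tuple word $u$, passes to its class representative via the computable radius $h_{u,v}$ of Lemma~\ref{lem:LeqSemidecidable}; this is how finite non-uniform data covers all inputs. It then splits on whether the generator $w$ occurs infinitely often in the weakly transitive point $x$ (then every tuple of words of $\B(X)$ is realizable, since $t_i \leq_X w$ holds automatically for a generator and the $t_i$ occur near infinitely many occurrences of $w$, hence in every order) or exactly once (then $x$ is isolated, and for each $t_i$ one decides, from look-up data about the tail subshifts $Y_L, Y_R$ and bounds $m$ on how far its representative can occur from $w$, whether $t_i$ occurs infinitely far on a given side of $w$ or only within a computable window $m + h_{t_i}$), after which the question reduces to a bounded check in the recursive language. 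Your ``gluing data'' assumption is essentially this analysis, so as written the proposal assumes the key step rather than proving it.
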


\begin{proof}
Restated in terms of clopen sets, we are given $[C_1], [C_2], \ldots, [C_\ell]$, and we need to decide whether all of these are visited in order by a single point. We can deal with each of the finitely many tuples $T = (t_1, \ldots, t_\ell) \subset (S^*)^\ell$ with $t_i \in C_i$ separately, and check for such a tuple whether there exist $x \in X$ and $i_1 < \ldots < i_\ell \in \Z$ such that $x_{[i_j, i_j + |t_j|-1]} = t_j$ for all $1 \leq j \leq \ell$. We denote this condition by $T \sqsubset X$.

Let $w_1, w_2, \ldots, w_k$ be a finite set of representatives for the elements of the generating poset. To each word $u$, we associate $h_u \in \N$ such that in every extension of $u$ of length $h_u$, the unique $w_i$ with $w_i \sim u$ occurs, given by Lemma~\ref{lem:LeqSemidecidable}.
 
We may suppose these words, and their mutual ordering, are known to the algorithm. Now, let $T = (t_1, \ldots, t_\ell)$ be given. We may again assume $\QW(X) > 1$, and prove the decidability of $T \sqsubset X$ by induction on $\QW(X)$. We do a similar case analysis as in Theorem~\ref{thm:Recursive}. By the induction hypothesis, for any proper subshift $Y \subsetneq X$ it is decidable whether $T \sqsubset Y$, so we suppose $T \not\sqsubset Y$ for all proper subshifts $Y$ of $X$. If $X$ is the union of its proper subshifts, the algorithm can decide $T \sqsubset X$ by checking whether $T \sqsubset Y$ for all $Y \subsetneq X$, so we assume this is not the case. As above, denote by $Y \subsetneq X$ the union of all proper subshifts of $X$ and let $w \in S^*$ be a generator for $X$ (known to the algorithm). Now, choose a point $x \in X$ which contains $w$, so that $x$ is weakly transitive in $X$. 
We may assume all of the words $t_i$ appear in $x$, since if $t_i \not\sqsubset x$, then $t_i \not\sqsubset X$, which the algorithm can prove by enumerating forbidden patterns of $X$.

If $w$ appears infinitely many times in $x$ (which we may assume is known to the algorithm), the algorithm only needs to prove $t_i \leq_X w$ for all $i$ (which is possible, since $\leq_X$ is semidecidable, and $w$ is a generator): then the words $t_i$ in fact appear in $x$ in every possible order, and thus $T \sqsubset X$. Suppose then that $w$ only appears once. Then $x$ is an isolated point, so that the only points with the same subwords as $x$ are in the orbit of $x$. Furthermore, $w$ is an isolating pattern. Let $Y_L$ and $Y_R$ be the subshifts generated by the left and right tail of $x$.

Now, let $u = t_i$ for some $i$, let $u \sim w_j$, and let $h = h_u$. 
Now, if the unique $w_j$ with $w_j \sim u$ occurs in $Y_L$, then $u$ appears infinitely many times in the left tail of $x$. Otherwise, we can compute a bound on how far to the left of $w$ the word $u$ can appear: $w_j$ occurs at most $m$ steps to the left of $w$ for some $m$ (which we may assume is known to the algorithm by way of a look-up table), and then $u$ cannot appear $m + h_u$ steps to the left of the origin. Similarly, we can semidecide that $u$ appears infinitely many times to the right, or find a bound on how far to the right of $w$ it can appear. Since the language of $X$ is recursive, we can now easily decide whether $T \sqsubset X$. 
\end{proof}

The halting problem is the subcase where there are only two clopen sets.

\begin{corollary}
\label{cor:Halting}
The halting problem of a quasiminimal $\Pi^0_1$ subshift is decidable.
\end{corollary}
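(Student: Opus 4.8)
The plan is to obtain this immediately as the case $\ell = 2$ of Theorem~\ref{thm:TupleTheorem}. Recall that the halting problem of $X$ asks, given two clopen sets $[C], [D]$, whether there exist $x \in X$ and $j \in \N$ with $x \in [C]$ and $\sigma^j(x) \in [D]$. I would split this according to whether $j = 0$ or $j \geq 1$. The case $j = 0$ is the question of whether $[C] \cap [D] \cap X \neq \emptyset$; writing the clopen set $[C] \cap [D]$ as $[E]_0$ for a suitable finite set $E \subset S^n$, this amounts to asking whether some word of $E$ lies in $\B_n(X)$, which is decidable because $X$ is recursive by Theorem~\ref{thm:Recursive}.

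For the case $j \geq 1$ I would apply Theorem~\ref{thm:TupleTheorem} to the tuple of clopen sets $([C_1], [C_2]) = ([C], [D])$. In the notation of that proof, the theorem decides whether there exist $x \in X$ and indices $i_1 < i_2$ in $\Z$ with $\sigma^{i_1}(x) \in [C]$ and $\sigma^{i_2}(x) \in [D]$. Translating by $\sigma^{i_1}$, this is equivalent to the existence of $x' \in X$ and $j := i_2 - i_1 \geq 1$ with $x' \in [C]$ and $\sigma^j(x') \in [D]$, which is exactly the $j \geq 1$ instance of the halting problem. The halting problem then answers ``yes'' precisely when at least one of the two subcases does, so it is decidable.

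There is essentially no obstacle here: all of the work has already been done in Theorem~\ref{thm:TupleTheorem} (and, underlying it, Theorem~\ref{thm:Recursive} and the structural lemmas about generators). The only point needing a moment's care is the boundary value $j = 0$, which is not expressible by an elementary piecewise testable language of the shape $a_1 S^* a_2$ — such a language forces $i_1 < i_2$ — but which is trivially handled by the recursiveness of $X$, so it does not cause any real difficulty.
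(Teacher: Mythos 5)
Your proposal is correct and follows the paper's own route: the corollary is deduced exactly as the two-clopen-set case of Theorem~\ref{thm:TupleTheorem} (the paper states ``the halting problem is the subcase where there are only two clopen sets''), with your separate treatment of $j=0$ via recursiveness (Theorem~\ref{thm:Recursive}) being a careful handling of a boundary case the paper leaves implicit. The paper additionally offers an alternative ``direct proof'' using a look-up table of answers $H(i,j)$ for representatives of the generating poset, but that is a supplement rather than a different main argument.
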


We give a simpler direct proof of this as well, using more external information. The simpler proof works for any fixed tuple length (with larger and larger look-up tables), but not in general.

\begin{proof}[Direct proof of Corollary~\ref{cor:Halting}]
Let $w_1, \ldots, w_k$ be representatives of the generating poset, and let $H(i, j)$ be the answer of the halting problem for the cylinders $[w_i], [w_j]$. Given any two words $u, v$, we find $w_i$ and $w_j$ such that $u \sim w_i$ and $v \sim w_j$. Using the fact that $X$ is recursive, we can check whether some $x \in X$ moves from $[u]$ to $[v]$ in at most
\[ |w_i| + h_{u, w_i} + h_{w_i, u} + |w_j| + h_{v, w_i} + h_{w_i, v} \]
steps (and if yes, answer "yes"). If not, the answer to the halting problem is $H(i, j)$. 
\end{proof}

\begin{remark}
By Theorem~\ref{thm:TupleTheorem}, we can decide $T \sqsubset X$ for any tuple $T$, and thus we can certainly decide $P(P_1(X), P_2(X), \ldots, P_k(X))$ where $P$ is an arbitrary Boolean formula and the $P_i(X)$ are statements of the form $T_i \sqsubset X$ for tuples of words $T_i$. In contrast, we saw in Theorem~\ref{thm:CountablePiecewiseTestableLanguages} that for piecewise testable languages, the Boolean closure of elementary piecewise testable languages, the model-checking problem is undecidable. This may seem contradictory, but it only means that
\[ P((\exists w \sqsubset X: P_1(w)), (\exists w \sqsubset X: P_2(w)), \ldots, (\exists w \sqsubset X: P_k(w))) \]
is decidable for $\Pi^0_1$ quasiminimal subshifts, while
\[ \exists w \sqsubset X: P(P_1(w), P_2(w), \ldots, P_k(w)) \]
is undecidable, where we write $P_i(w) \iff T_i \sqsubset w$.
\end{remark}

\subsection{Quasiminimal subshifts with $\N$-actions}
\label{sec:NActions}

Most of our study has been about quasiminimal subshifts $X \subset S^\Z$ with the $\Z$-action given by the shift map. We now show that quasiminimal $\N$-subshifts are in fact decidable (when subsystems are not required to be surjective). See Figure~\ref{fig:NCharacterization} for an illustration.

\begin{theorem}
\label{thm:NCharacterization}
Let $X \subset S^\N$ be a subshift. Then $X$ is quasiminimal if and only if there exist finitely many minimal subshifts $Y_1, Y_2, \ldots, Y_k$, finitely many points $x_1, \ldots, x_\ell$ and $m$ such that $X = \bigcup_i Y_i \cup \{x_1, \ldots, x_\ell\}$ such that for all $i \in [1, \ell]$, either $\sigma(x_i) \in \bigcup_i Y_i$ or $\sigma(x_i) \in \{x_{i+1}, \ldots, x_\ell\}$.
\end{theorem}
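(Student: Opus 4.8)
\textbf{Proof plan for Theorem~\ref{thm:NCharacterization}.}

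The plan is to prove both directions of the equivalence, with the "only if" direction being the substantive one. For the "if" direction, suppose $X = \bigcup_i Y_i \cup \{x_1,\dots,x_\ell\}$ has the stated form. Since the $Y_i$ are minimal and hence have only themselves and $\emptyset$ as subshifts, and since any subshift $Z \subseteq X$ is a closed shift-invariant set, $Z$ is determined by which of the finitely many $Y_i$ it contains entirely together with which of the finitely many orbit-closures $\overline{\mathcal{O}(x_j)}$ it contains; the condition that $\sigma(x_i)$ lands either in $\bigcup_i Y_i$ or among $\{x_{i+1},\dots,x_\ell\}$ guarantees that each $\overline{\mathcal{O}(x_j)}$ is just $\{x_j, \sigma(x_j), \sigma^2(x_j),\dots\}$ together with a sub-union of the $Y_i$'s, so only finitely many subsets of $X$ arise as subshifts. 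Hence $X$ is quasiminimal. (One should be slightly careful that in the $\N$-action setting a subshift need not be surjective, so the orbit of $x_j$ under $\sigma$ is genuinely just a forward ray, which is why no compactification issues arise — the tail of $x_j$ is eventually absorbed into $\bigcup_i Y_i$ or into the finite list.)

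For the "only if" direction, assume $X \subset S^\N$ is quasiminimal. First, $X$ contains at least one minimal subshift (every nonempty subshift does), and since $X$ has only finitely many subshifts it contains only finitely many minimal ones; call them $Y_1,\dots,Y_k$ and set $M = \bigcup_i Y_i$. The claim is that $X \setminus M$ is finite. Suppose not: then $X \setminus M$ is infinite. The key observation is that for any $x \in X$, the forward orbit closure $\overline{\mathcal{O}(x)}$ is a subshift of $X$; since $X$ has finitely many subshifts, there are only finitely many distinct orbit closures among points of $X$, so there is a point $x$ with $\overline{\mathcal{O}(x)}$ containing infinitely many points of $X \setminus M$. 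Now $\overline{\mathcal{O}(x)}$ contains a minimal subshift $Y_i$, and in an $\N$-subshift the orbit of $x$ accumulates (in the absence of finiteness) onto this minimal set; I would argue that the points of $\overline{\mathcal{O}(x)} \setminus Y_i$ not in the forward orbit $\{x,\sigma x,\dots\}$ themselves generate subshifts strictly between, producing an infinite strictly-decreasing (or increasing) chain of subshifts — contradicting quasiminimality. The cleanest route is probably: among points of $X$, say $x \preceq y$ if $\overline{\mathcal{O}(x)} \subseteq \overline{\mathcal{O}(y)}$; this is a preorder with finitely many equivalence classes (Lemma~\ref{lem:IsFinite}-style argument, noting the $\N$-analogue holds verbatim), and a non-minimal $x$ has its shift $\sigma(x)$ with $\overline{\mathcal{O}(\sigma x)} \subseteq \overline{\mathcal{O}(x)}$, strictly smaller unless $x$ is eventually periodic in a way forcing it into a minimal set. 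Iterating $\sigma$ on a point $x \in X \setminus M$, the orbit closures weakly decrease and stabilize, and stabilization forces the tail of $x$ into $M$; the points of $X \setminus M$ are then organized as finitely many finite "stalks" hanging off $M$, because each equivalence class of the $\preceq$-preorder outside $M$ can contain only finitely many points (an infinite class would again give an infinite antichain of distinct orbit closures, or an infinite descending chain).

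The main obstacle I anticipate is making precise the step "the orbit of $x$ outside the minimal set $Y_i$ it accumulates on consists of only finitely many points." The danger is a point $x$ whose forward orbit is infinite and injective but whose orbit closure has only two subshifts, $Y_i$ and itself — this is exactly the $\N$-analogue of the sunny-side-up-type examples, and it is \emph{allowed}: such an $x$ would be one of the $x_j$, with $\sigma^n(x) \to Y_i$, but then $\sigma(x)$ has a strictly smaller orbit closure? No — $\overline{\mathcal{O}(\sigma x)} = \overline{\mathcal{O}(x)}$ if $x$ is recurrent to itself, so injectivity of the orbit is fine as long as it is not recurrent. The resolution: if $\sigma^n(x) \not\to Y_i$ and the orbit is infinite, one extracts a limit point $z \notin Y_i$ with $\overline{\mathcal{O}(z)} \subsetneq \overline{\mathcal{O}(x)}$, and if $z \notin M$ we recurse, getting an infinite descending chain — so eventually $z \in M$, i.e. $\sigma^n(x)$ does converge to a minimal subshift, and then the orbit of $x$ that lies outside $M$ is finite by compactness (only finitely many $\sigma^n(x)$ can stay a bounded distance from $M$). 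Once $X\setminus M$ is known finite, listing its points as $x_1,\dots,x_\ell$ in an order refining $\sigma$-descent gives the stated condition $\sigma(x_i)\in M \cup \{x_{i+1},\dots,x_\ell\}$ immediately, and Corollary~\ref{cor:NDecidable} (decidability) follows by fitting this into the Delvenne–Kůrka–Blondel decidability result for systems whose limit set is a finite union of minimal systems.
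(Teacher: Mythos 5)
Your ``if'' direction is fine, and your finiteness-of-orbit-closures observations are in the right spirit, but the ``only if'' direction has a genuine gap at exactly the hard point: you never rule out a point $x \in X$ that is \emph{recurrent but not uniformly recurrent} (think of the one-sided analogue of Example~\ref{ex:UncountableSubstitution}, a point $x$ in which the symbol $1$ recurs with unbounded gaps). For such a point $\OC{\sigma^n(x)} = \OC{x}$ for every $n$, since $x$ is a limit of its own forward orbit. Consequently your main devices all misfire: the shift does \emph{not} strictly decrease orbit closures (and ``strictly smaller unless $x$ is eventually periodic'' is false -- recurrence, not eventual periodicity, is what makes the inclusion an equality); the ``extract a limit point $z$ with $\OC{z} \subsetneq \OC{x}$ and recurse'' step cannot conclude that the forward orbit converges to a minimal set, since the existence of \emph{some} limit point in $M=\bigcup_i Y_i$ says nothing about the other limit points; and the closing compactness claim (``only finitely many $\sigma^n(x)$ can stay a bounded distance from $M$'') is simply not true -- in the dangerous examples the whole orbit stays outside $M$ while accumulating on it. Likewise, your justification that a $\preceq$-equivalence class outside $M$ must be finite does not work: all points in one class have the \emph{same} orbit closure, so an infinite class produces neither an antichain nor a chain of distinct subshifts; indeed in the recurrent example all shifts $\sigma^n(x)$ form one infinite class, and the infinitely many subshifts witnessing non-quasiminimality come from elsewhere (e.g.\ the sets of points containing at most $k$ occurrences of a fixed word).

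The paper closes this gap with a mechanism your proposal lacks. Since the images $\sigma^i(X)$ form a decreasing chain of subshifts and $X$ has finitely many subshifts, the chain stabilizes at $Y=\sigma^m(X)$, on which $\sigma$ is surjective. Finiteness of $X\setminus Y$ then follows from your kind of orbit-closure counting (each point of $X \setminus Y$ lies in a subshift meeting $X\setminus Y$ in a finite set). The crux is that every point of $Y$ is uniformly recurrent: if some $y\in Y$ had a word $w$ occurring exactly once (obtained from a non-uniformly-recurrent point by shifting or by passing to a limit along long gaps), then surjectivity of $\sigma$ on $Y$ plus compactness gives an infinite chain of preimages $\cdots \mapsto y_2 \mapsto y_1 = y$ inside $Y$, and $\OC{y_i}$ contains exactly $i$ points in which $w$ occurs, yielding infinitely many distinct subshifts -- contradiction. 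Some argument of this preimage-chain type (or another way of excluding recurrent, non-uniformly-recurrent points) is the missing idea; with it, the rest of your outline (ordering the finitely many points of $X\setminus Y$ compatibly with $\sigma$, then quoting Lemma~8 of \cite{DeKuBl06} for Corollary~\ref{cor:NDecidable}) goes through as you describe.
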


\begin{proof}
The "if" direction is easy to verify. Suppose then that $X$ is quasiminimal. We show that $X$ is the of required form.

Since each $\sigma^i(X)$ is a subsystem, we must have $\sigma^{m+1}(X) = \sigma^m(X)$ for some $m$, and we write $Y = \sigma^m(X)$. We claim that there are only finitely many points outside $Y$. Namely, if $x \in X$, then $\sigma^m(x) \in Y$. If $\ell$ is the least number such that $\sigma^\ell(x) \in Y$, then
\[ \OC{x} = \{x, \sigma(x), \ldots, \sigma^{\ell-1}(x)\} \cup Y' \]
is a subshift of $X$ for some subshift $Y' \subset Y$ (it is closed as a union of $\ell + 1$ closed sets, and shift-invariant by definition). Thus, each point $x \in X \setminus Y$ is contained in a subshift of $X$ which contains only finitely many points of $X \setminus Y$. If $X \setminus Y$ were infinite, this would imply that $X$ is not quasiminimal.

For $x, y \in X \setminus Y$, write $x \prec y$ if $\sigma^i(x) = y$ for some $i \in \N$. Since $\sigma^m(x) \in Y$, we cannot have $x \prec y \prec x$ unless $x = y$, so $\prec$ is a partial order on $X \setminus Y$. We obtain the points $x_1, \ldots, x_\ell$ by extending it to a total order.

Let $Y_1, Y_2, \ldots, Y_k$ be the minimal subsystems of $X$. The list is finite by quasiminimality. We now show that every point $x \in Y$ is uniformly recurrent, thus in $\bigcup_i Y_i$. Suppose this is not the case. Then there exists a word $w \sqsubset x$ which either occurs finitely many times in $x$, or occurs infinitely many times but with arbitrarily long gaps. In the first case, $y = \sigma^i(x)$ contains only one copy of $w$ at $y_{[0,|w|-1]} = w$, and in the second, we find a limit point $y \in \OC{x}$ with this property. Since $y \in Y$, then by a compactness argument it has an infinite chain of preimages
\[ \cdots \overset{\sigma}{\mapsto} y_4 \overset{\sigma}{\mapsto} y_3 \overset{\sigma}{\mapsto} y_2 \overset{\sigma}{\mapsto} y_1 = y \]
with $y_i \in Y$ for all $i$. Then $X_i = \OC{y_i}$ is a subshift of $Y$ containing exactly $i$ points where $w$ occurs, and thus the $X_i$ are an infinite family of subshifts of $X$, contradicting quasiminimality. 
\end{proof}

\begin{figure}
\begin{center}
\begin{tikzpicture}
\tikzset{>=stealth'}

\draw (0,0) ellipse (1cm and 1cm);
\node at (0,-0.4) {$Y_1$};
\draw (3,0) ellipse (1cm and 1cm);
\node at (3,-0.4) {$Y_2$};
\draw (8,0) ellipse (1cm and 1cm);
\node at (8,-0.4) {$Y_3$};

\node[draw,shape=circle,inner sep=2] (An) at (70:0.6) {};
\node[draw,shape=circle,inner sep=2] (Bn) at ($ (An) + (70:1) $) {};
\node[draw,shape=circle,inner sep=2] (Cn) at ($ (Bn) + (70:1) $) {};
\node[draw,shape=circle,inner sep=2] (Dn) at ($ (Cn) + (115:1) $) {};
\node[draw,shape=circle,inner sep=2] (En) at ($ (Cn) + (25:1) $) {};
\draw[->] (Dn) -- (Cn);
\draw[->] (Cn) -- (Bn);
\draw[->] (Bn) -- (An);
\draw[->] (En) -- (Cn);

\node[draw,shape=circle,inner sep=2] (Fn) at (170:0.7) {};
\node[draw,shape=circle,inner sep=2] (Gn) at ($ (Fn) + (110:1) $) {};
\node[draw,shape=circle,inner sep=2] (Hn) at ($ (Fn) + (170:1) $) {};
\node[draw,shape=circle,inner sep=2] (In) at ($ (Fn) + (230:1) $) {};
\node[draw,shape=circle,inner sep=2] (Jn) at ($ (Hn) + (170:1) $) {};

\draw[->] (Gn) -- (Fn);
\draw[->] (Hn) -- (Fn);
\draw[->] (In) -- (Fn);
\draw[->] (Jn) -- (Hn);

\node[draw,shape=circle,inner sep=2] (Kn) at ($ (3,0) + (65:0.6) $) {};
\node[draw,shape=circle,inner sep=2] (Ln) at ($ (3,0) + (-10:0.5) $) {};
\node[draw,shape=circle,inner sep=2] (Mn) at ($ (Kn) + (65:1) $) {};
\node[draw,shape=circle,inner sep=2] (Nn) at ($ (Ln) + (-10:1) $) {};

\draw[->] (Mn) -- (Kn);
\draw[->] (Nn) -- (Ln);

\node[draw,shape=circle,inner sep=2] (On) at ($ (8,0) + (135:0.8) $) {};
\node[draw,shape=circle,inner sep=2] (Pn) at ($ (On) + (135:1) $) {};
\node[draw,shape=circle,inner sep=2] (Qn) at ($ (Pn) + (90:1) $) {};
\node[draw,shape=circle,inner sep=2] (Rn) at ($ (Pn) + (180:1) $) {};
\node[draw,shape=circle,inner sep=2] (Sn) at ($ (Qn) + (45:1) $) {};
\node[draw,shape=circle,inner sep=2] (Tn) at ($ (Qn) + (135:1) $) {};
\node[draw,shape=circle,inner sep=2] (Un) at ($ (Rn) + (225:1) $) {};
\node[draw,shape=circle,inner sep=2] (Vn) at ($ (Rn) + (135:1) $) {};

\draw[->] (Pn) -- (On);
\draw[->] (Qn) -- (Pn);
\draw[->] (Rn) -- (Pn);
\draw[->] (Sn) -- (Qn);
\draw[->] (Tn) -- (Qn);
\draw[->] (Un) -- (Rn);
\draw[->] (Vn) -- (Rn);
\end{tikzpicture}
\end{center}
\caption{A typical quasiminimal $\N$-subshift. The $Y_i$ are the minimal subshifts and the small circles are the points $x_i$ and their images in $\bigcup_i Y_i$.}
\label{fig:NCharacterization}
\end{figure}
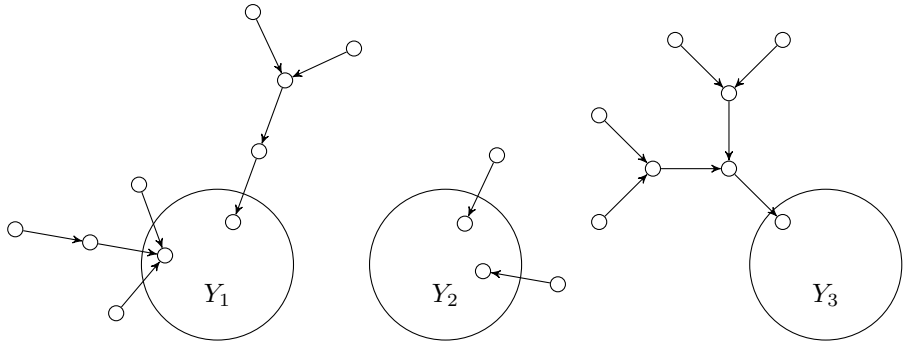

We see that the case of $\N$-actions looks very different than that of $\Z$-actions.

\begin{corollary}
\label{cor:NDecidable}
Let $X \subset S^\N$ be recursive and quasiminimal. Then $X$ is decidable.
\end{corollary}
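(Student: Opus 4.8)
The plan is to read this off from the structure theorem, Theorem~\ref{thm:NCharacterization}, and then invoke the decidability criterion of \cite{DeKuBl06} for symbolic systems whose limit set is a finite union of minimal systems; this corollary is precisely the application of that criterion promised in Section~\ref{sec:Intro}.

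The first step is to identify the limit set. For an $\N$-subshift, $\Omega := \bigcap_{n\ge 0}\sigma^n(X)$ is a decreasing intersection of subshifts, and since a quasiminimal subshift has only finitely many subshifts, the chain $\sigma^n(X)$ stabilizes, i.e.\ $\sigma^{m+1}(X)=\sigma^m(X)=\Omega$ for some $m$; this is exactly the set denoted $Y$ in the proof of Theorem~\ref{thm:NCharacterization}. That proof establishes that every point of $Y$ is uniformly recurrent, hence $Y\subseteq\bigcup_{i}Y_i$, where $Y_1,\dots,Y_k$ are the finitely many minimal subsystems of $X$; conversely each $Y_i$ is surjective (as $\sigma(Y_i)$ is a nonempty subshift of the minimal subshift $Y_i$), so $Y_i=\sigma^m(Y_i)\subseteq\sigma^m(X)=Y$. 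Therefore $\Omega=\bigcup_{i=1}^{k}Y_i$ is a finite union of minimal subshifts.

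The second step is to check effectiveness of the pieces. By hypothesis $X$ is recursive. Each $Y_i$ is a subshift of $X$, and by the argument of Lemma~\ref{lem:SubshiftsPi} — which only uses that the SFT approximations $Y_{i,n}\cap X$ form a decreasing sequence of subshifts of a quasiminimal subshift and hence stabilize, and so goes through verbatim for $\N$-actions — $Y_i$ is obtained from $X$ by forbidding finitely many words, so $\B(Y_i)$ is recursive; being a recursive minimal subshift, $Y_i$ is decidable (cf.\ Theorem~\ref{thm:MinimalRecursive} and \cite{DeKuBl06}). Thus the limit set of $X$ is a finite union of recursive minimal subshifts and $X$ itself is recursive, so Proposition~9 of \cite{DeKuBl06} applies and yields that $X$ is decidable.

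The only point that genuinely needs care is the identification $\Omega=\bigcup_i Y_i$: that the stabilized image $\sigma^m(X)$ consists only of uniformly recurrent points and contains each minimal subsystem. Both facts are extracted directly from the proof of Theorem~\ref{thm:NCharacterization}; granting them, the corollary is an immediate invocation of the limit-set decidability result of \cite{DeKuBl06}.
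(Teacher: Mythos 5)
Your proposal is correct and follows essentially the same route as the paper: the paper's proof likewise combines Theorem~\ref{thm:NCharacterization} (to see that the limit set of a quasiminimal $\N$-subshift is a finite union of minimal systems) with the limit-set decidability result of \cite{DeKuBl06}. The extra effectiveness checks you perform (recursiveness of each $Y_i$ via the argument of Lemma~\ref{lem:SubshiftsPi}) are harmless but not needed, since the cited result only requires the system $X$ itself to be effective, which is the hypothesis.
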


\begin{proof}
Lemma~8 of \cite{DeKuBl06} states that a system whose limit set is a union of finitely many minimal systems is decidable. By the above theorem, it is clear that quasiminimal $\N$-subshifts have this property.
\end{proof}

\subsection{Decidability in the countable case}

We first show that, for pretty much trivial reasons, having only a single minimal subshift implies decidability in the countable case (even without assuming, a priori, that the subshift is $\Pi^0_1$). In particular, the subshift in Theorem~\ref{thm:CountableCodedLanguages} has the optimal number of subsystems.

\begin{proposition}
\label{prop:3IsDecidable}
Let $X \subset S^\Z$ be a countable subshift with $\QW(X) \leq 3$. Then $X = \OC{{^\infty u} v u^\infty}$ for some words $u, v$, and thus decidable.
\end{proposition}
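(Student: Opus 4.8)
The plan is to unpack what $\QW(X) \leq 3$ forces, working up through the small cases. If $\QW(X) = 1$ then $X = \emptyset$, and if $\QW(X) = 2$ then $X$ is nonempty and minimal, so by Theorem~7 of \cite{Au88} (or directly) $X$ is a finite orbit, hence $X = \OC{a^\Z}$ for a single letter $a$, which has the required form with $u = a$, $v = \lambda$ (or can be folded into the general case). So the real work is the case $\QW(X) = 3$: $X$ has exactly one proper nonempty subshift $Y$, with $\QW(Y) = 2$, i.e. $Y$ is nonempty and minimal.

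First I would argue that $X$ is countable by hypothesis, and a countable subshift cannot be minimal and infinite (as quoted in the excerpt, a transitive subshift is either finite or uncountable; more elementarily a countable subshift has an isolated point, and a minimal subshift with an isolated point is finite). Hence $Y$ is a finite minimal subshift, so $Y = \OC{a^\Z} = \{a^\Z\}$ for a single letter $a$ — wait, it could be a periodic orbit of length $p$, i.e. $Y = \OC{\INF w . w \INF}$ with $|w| = p$; in either case $Y = \OC{\INF w w . w w \INF}$ is a single periodic orbit, so $Y = \OC{\INF u . u \INF}$ for $u = w$ (this already has the stated form with $v = \lambda$, $u = w$, reading ${}^\infty u v u^\infty$). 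Next I would take any point $x \in X \setminus Y$. Its orbit closure $\OC{x}$ is a subshift of $X$ containing a point not in $Y$, so $\OC{x} \notin \{\emptyset, Y\}$, forcing $\OC{x} = X$. Thus \emph{every} point of $X \setminus Y$ generates $X$; in particular $X$ is weakly transitive and indeed every non-$Y$ point is weakly transitive.

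The core step is then to show $x$ is of the form $\INF u v u \INF$ for the \emph{same} $u$ as above. The Cantor--Bendixson derivative $X'$ is a subshift of $X$; since $X$ is countable, $X'$ is a proper subshift, so $X' \in \{\emptyset, Y\}$. If $X' = \emptyset$ then $X$ is finite, hence (being weakly transitive via $x$, which then has a dense orbit in a finite system) $X$ is a single periodic orbit and we are done. Otherwise $X' = Y$, i.e. $x$ is an isolated point of $X$ and $X = \Orb(x) \cup Y$. Now I examine the two tails of $x$. The left tail generates some subshift $Y_L \subseteq X$; if $Y_L = X$ then $x$ would not be isolated (its left tail already produces every word of $X$, so $Y$ would have to contain a non-$Y$ word — contradiction, or more carefully, $x$ would have a companion point in $X$ with the same left tail, violating isolation). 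Hence $Y_L = Y$, so every long enough suffix of the left tail is a word of the periodic point, forcing the left tail of $x$ to be eventually $\INF u$ up to phase. Symmetrically the right tail of $x$ is eventually $u \INF$ up to phase. Absorbing the finite transitional blocks (and a suitable shift of $u$) into a finite word $v$, we get $x = \INF u v u \INF$ as claimed, where $u$ generates $Y = \OC{\INF u . u \INF}$.

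The main obstacle is the last paragraph: making rigorous that an isolated point whose tails generate only the periodic minimal subshift $Y$ must actually be \emph{eventually periodic} on both sides with the period of $Y$ — one must rule out a tail that visits all words of $Y$ but never settles into the periodic pattern (impossible here because $Y$ is finite, so "the left tail generates $Y$" literally means all sufficiently long subwords lie in the finite language $\B(Y)$, and a one-sided sequence all of whose long factors are factors of a periodic word is eventually periodic with that period — a short combinatorics-on-words argument). Once $x = \INF u v u \INF$ is established, decidability is immediate: the language of $\OC{\INF u v u \INF}$ is manifestly recursive (one can even write down $\B(X)$ explicitly), so in fact $X$ is decidable in the strong sense of the model-checking problem for Muller automata, as $\OC{\INF u v u\INF}$ has finitely many points.
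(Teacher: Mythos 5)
Your route is correct and genuinely different from the paper's. The paper also starts from the Cantor--Bendixson derivative, but after noting that $X'$ must be a countable minimal (hence finite) subshift, so that $X$ has CB-rank $2$, it simply invokes the cited classification of countable CB-rank-$2$ subshifts as orbit closures of a single point that is eventually periodic in both directions, and then uses uniqueness of the minimal subsystem to force the two repeating patterns to coincide. You instead prove the needed special case by hand: the unique proper nonempty subshift $Y$ is a finite periodic orbit, every point of $X \setminus Y$ generates $X$ and is isolated, and a tail analysis shows such a point is eventually periodic on both sides with the period word $u$ of $Y$. Your argument is self-contained and elementary, at the price of redoing, in this special case, the structure theorem the paper outsources.

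Two of your justifications need repair, though both are fixable. First, in the step you yourself flag as the main obstacle, the claimed equivalence is not literal: ``the subshift generated by the left tail equals $Y$'' does not by itself say that all sufficiently long factors of that tail lie in $\B(Y)$. For instance a left tail of the form ${}^\infty u\,c\,u^5$, with $c$ a letter not occurring in $u$, generates only the periodic orbit of ${}^\infty u^\infty$, yet has arbitrarily long factors containing the defect $c$. What you need (and what is true) is that defects --- factors of length $|u|+1$ not in $\B(Y)$ --- occur only boundedly far out in the tail: if they occurred at positions tending to $-\infty$, a compactness/diagonalization over the windows around them yields a limit point of the sequence $\sigma^{-n}(x)$ lying outside $Y$; such a limit point is non-isolated, contradicting $X'=Y$ (equivalently, it would have to be a shift of $x$, forcing $x$ to be periodic and hence $X$ minimal). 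Once defects are confined to a bounded region, local $p$-periodicity gives the eventual period exactly as you say. Note that you also use, implicitly, that $x$ is not periodic (immediate, since otherwise $\OC{x}=X$ would be minimal), and this same observation is what makes your ``companion point'' argument ruling out $Y_L=X$ rigorous: an occurrence of an isolating word of $x$ elsewhere in $x$ would force $\sigma^j(x)=x$ for some $j\neq 0$. Second, $\OC{{}^\infty u v u^\infty}$ does not have finitely many points --- it is countably infinite --- but your decidability conclusion stands for the reason you intend: the subshift is one explicitly described eventually periodic orbit together with one periodic orbit, so its language is recursive and the model-checking problem for Muller automata reduces to finitely many explicitly computable eventually periodic itineraries.
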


\begin{proof}
The case $\QW(X) < 3$ is trivial, so suppose $\QW(X) = 3$. The CB-derivative $X'$ of $X$ is a proper subshift of $X$. We must have $\QW(X') < \QW(X)$, and thus $X'$ is a countable minimal subshift. This implies $X^{(2)} = \emptyset$, so the CB-rank of $X$ is 2. Every countable subshift with CB-rank 2 is generated by a single point which is eventually periodic in both directions \cite{CeDaToWy12,SaTo14a}, and since $X$ has only one minimal subshift $X'$, the repeating patterns in both tails must be equal. Decidability of such a subshift is easy to show.
\end{proof}



By Theorem~\ref{thm:CountablePiecewiseTestableLanguages}, there exists a recursive countable quasiminimal subshift $X$ for which the counting problem is $\Sigma^0_1$-complete. We needed $4$ proper nontrivial subsystems for this, that is, $\QW(X) = 6$. We show this is optimal. We begin with the following recoding argument.

\begin{definition}
\label{def:NormalForm}
Suppose $X$ is countable and quasiminimal. We say $X$ is in \emph{normal form} if the following holds. There exists a finite set of words $w_1, \ldots, w_n$, $w_{n+1}, \ldots, w_{n+m}$ such that
\begin{itemize}
\item the words $w_i$ are over disjoint alphabets and $\B_1(w_i) = |w_i|$ for all $w_i$,
\item every point $x \in X$ has a representation as a bi-infinite concatenation of the words $w_i$ (which is automatically unique),
\item for $j \in [1, n]$, the point $x_j = w_j^\Z$ is in $X$,
\item for each $j \in [n+1, n+m]$, there is (up to shifting) a unique point $x_j \in X$ with $w_j \sqsubset x_j$ but $w_k \not\sqsubset x_j$ for $k > j$, and $w_j$ occurs only once in $x_j$, and
\item every point of $X$ is in the orbit of one of the points $x_j$.
\end{itemize}
Furthermore, each of the points $x_j$ with $j \in [n+1, n+m]$ is of the following form: $x_j = x_j^L . w_j x_j^R$, where either $x_j^R = u_i^\N$ for some $i \in [1,k]$, or we have
\[ x_j^R = (w_{h_1})^{\ell_1} \; w_{k_1} \;  (w_{h_2})^{\ell_2} \;  w_{k_2} \;  (w_{h_3})^{\ell_3}  \; w_{k_3}  \; (w_{h_4})^{\ell_4} \ldots, \]
where for all $i$, $h_i \in [1, n]$, $k_i \in [n+1, n+j-1]$ and $\ell_i > 1$
and symmetric conditions hold for $x_j^L$.
\end{definition}

It is easy to see that a subshift $X$ in normal form with fixed $n, m$ contains precisely $n$ minimal subsystems.

\begin{lemma}
\label{lem:NormalForm}
Let $Y$ be countable and quasiminimal. Then $Y$ is conjugate to a subshift in normal form.
\end{lemma}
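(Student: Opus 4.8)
The plan is to prove the lemma by induction on $\QW(Y)$, the number of subshifts of $Y$; since conjugacy is purely combinatorial the statement carries no computability content. The core is a structural assertion — that, up to a higher block recoding, every point of $Y$ is a bi-infinite concatenation of finitely many building blocks, the periodic ones generating the minimal subshifts and the singular ones nested exactly as in Definition~\ref{def:NormalForm} — after which passing to normal form is just a relabeling onto pairwise disjoint fresh alphabets.

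For the base, if $\QW(Y)\le 2$ then $Y$ is empty or minimal, and a countable minimal subshift is a single periodic orbit (a minimal subshift is transitive, hence finite or uncountable), which is conjugate to $\OC{w^\Z}$ for any length-$q$ word $w$ with pairwise distinct letters, via the window-$q$ block code that reads off the phase $j\bmod q$; similarly a finite $Y$ is a disjoint union of periodic orbits, immediately put in normal form with $m=0$ over disjoint fresh alphabets. So from now on $Y$ is infinite and countable; then $Y$ has no doubly transitive point (an infinite transitive subshift is uncountable) and $Y^{(1)}\subsetneq Y$ (a nonempty countable subshift has isolated points, since a nonempty perfect compact space is uncountable).

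In the inductive step let $Y_0$ be the union of all proper subshifts of $Y$; it is a proper subshift with $\QW(Y_0)<\QW(Y)$ and with the same minimal subshifts as $Y$, so by induction it is conjugate to a normal-form subshift with periodic blocks $w_1,\dots,w_n$ and singular blocks $w_{n+1},\dots,w_{n+m_0}$. If $Y_0=Y$ — equivalently, by Lemma~\ref{lem:Generator}, $Y$ has no generator — then $Y$ is the union of its finitely many maximal proper subshifts $Z_1,\dots,Z_r$, each in normal form by induction, and any two of them meet only in a subshift that sits below the top level of each (as $Z_i\cap Z_j\subsetneq Z_i$ by maximality); a single block code of window exceeding all those used for the $Z_i$, together with the union of their mutually compatible relabelings, then realizes $Y$ in normal form with $w_1,\dots,w_n$ the shared periodic blocks and the rest inherited from the $Z_i$ under a common extension of their orderings. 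Otherwise $Y_0\subsetneq Y$, and by Lemma~\ref{lem:Generator} $Y$ has a generator $w$; every point containing $w$ is weakly transitive, hence generates $Y$, is isolated in $Y$ (else it lies in the proper subshift $Y^{(1)}$, contradicting that it generates $Y$) and lies in a single orbit (a non-conjugate second such point $x'$ would lie in the proper subshift $Y\setminus\mathcal{O}(x)$ and still generate $Y$, which is impossible). Fix such a point $x$; it is necessarily non-periodic, and it has an isolating window $w'$ with $w\sqsubset w'$ that occurs exactly once in $x$ and only ever in $\mathcal{O}(x)$. Writing $x=x^L.\,w'\,x^R$, the bi-infinite limits of shifts of $x^R$ (and of $x^L$) avoid $w'$, hence lie in $Y_0$, so by the inductive normal form $x^R$ and $x^L$ are, past the junction, one-sided concatenations of $w_1,\dots,w_{n+m_0}$ in the required alternating-run shape (runs of periodic blocks separated by single lower singular blocks, the runs made length $>1$ by absorbing one adjacent periodic copy into $w'$ if necessary). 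Taking $w_{n+m}:=w'$ with $m=m_0+1$, a single block code on $Y$ of sufficiently large window that restricts to the inductive one on $Y_0$ and that reads off, around each position, either the enclosing periodic run (relabel by a fresh phase-letter) or the enclosing occurrence of $w'$ (relabel by a fresh letter) yields a subshift in normal form; unique concatenation holds because the block alphabets are pairwise disjoint and each singular block is recognizable locally.

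The main obstacle is to carry out the entire recoding as a \emph{single} sliding block code on $Y$ that simultaneously makes all $n+m$ block alphabets disjoint and letter-injective, is compatible with the normal form already obtained for $Y_0$ (the inductive recoding need not extend verbatim, so one must re-derive the local rule on a larger window), and — in the first case above — glues the normal forms of the maximal proper subshifts along their intersections. All of this is done by taking the window large relative to the finitely many building blocks and then relabeling, but verifying that no window of a periodic block occurs outside a genuine run, that the junction and singular regions are determined by a bounded window, and that the resulting decomposition has exactly the alternating-run shape with runs of length $>1$, is the delicate combinatorial heart of the argument. Quasiminimality is what keeps the set of building blocks finite, while countability (no doubly transitive point, and the presence of isolated points) is what forces every point to have the flanked-by-lower-blocks form rather than a genuinely transitive structure.
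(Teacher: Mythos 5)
Your plan follows the same route as the paper (induction on $\QW(Y)$; in the non-union case an isolated, weakly transitive point given by a generator, recoding of its tails by the inductively obtained conjugacy and a fresh block covering a bounded central window; pasting in the union case), and the preliminary analysis -- generator extended to an isolating pattern, uniqueness of the singular orbit, non-periodicity -- is correct. But as written there is a genuine gap, and it sits exactly where you label the ``delicate combinatorial heart''. The induction hypothesis you invoke is just the lemma itself for smaller $\QW$, i.e.\ for each maximal proper subshift $Z_i$ you only get \emph{some} conjugacy to \emph{some} normal form. In the case $Y_0=Y$ you then take ``the union of their mutually compatible relabelings'', but nothing guarantees that the conjugacies of $Z_i$ and $Z_j$, produced by independent applications of the hypothesis, agree on $Z_i\cap Z_j$ (they may use different fresh alphabets, windows and phases), and a map defined piecewise on the non-clopen pieces $Z_i$ is a conjugacy of $Y$ only if the pieces do agree on intersections. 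The fix is to strengthen the statement being proved by induction: the conjugacy built for a system must restrict to the conjugacies already built for all of its subsystems. The paper makes this the explicit inductive invariant, and in the generator case the invariant can be maintained precisely because the generator $w$ occurs in no point of $Y_0$, so the new code $\psi$ can be taken to coincide with the old code $\phi$ at every coordinate whose bounded window misses $w$, and to output fresh symbols only near the finitely many occurrences of $w$. Your proposal asserts the compatibility rather than arranging it, so the pasting step is unsupported.

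A second, smaller gap is the step from ``the limit points of the tails of $x$ lie in $Y_0$'' to ``past the junction the tails are one-sided concatenations of $w_1,\dots,w_{n+m_0}$''. The tails themselves need not extend to points of $Y_0$; only their limit points do, so one must first choose $c$ so large that every window of length $2r+t$ (with $r$ the radius of $\phi$ and $t$ the maximal block length) occurring in $x_{[c,\infty)}$ or $x_{(-\infty,-c]}$ is legal for $Y_0$ -- i.e.\ pass to a sufficiently high SFT approximation of $Y_0$ -- and only then apply $\phi$ to the tails and absorb the remaining bounded central region (including any too-short runs or singular blocks adjacent to the junction) into the new top block. This quantitative step, together with the verification that the resulting decomposition has the alternating-run shape of Definition~\ref{def:NormalForm}, is the content of the paper's explicit construction of $\psi$; in your proposal it is acknowledged but deferred, so what you have is a correct plan rather than a proof.
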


\begin{proof}
We construct the conjugacy inductively on $\QW(Y)$, so that if $Z \subset Y$ and $Z \subset Y'$, then the conjugacies from $Y$ and $Y'$ to their recoded versions $X$ and $X'$ have the same restriction to $Z$. This is guaranteed if we always directly modify the conjugacy obtained for subsystems when building the conjugacy for the full system in the induction step.

If $Y$ is countable and minimal, then it is clearly already of the form $^\infty w ^\infty$ as required ($w_1 = w$, $n = 1$ and $m = 0$). If $Y$ is the union of its proper subsystems, then we apply the inductive assumption to the subsystems, and observe that the conjugacy extends to the full system because the conjugacies agree in the intersections.\footnote{More precisely, this follows from the Pasting Lemma.}

Suppose then that $Y$ is not minimal, and not a union of proper subsystems. Then it contains a generator $w$, which we may assume is an isolating pattern, because isolated points are dense in a countable subshift. Let $y$ be the unique point containing $w$. Then $w$ occurs only finitely many times in $y$, since $Y$ is not minimal. Let $\phi$ be the conjugacy obtained for the union $Z$ of proper subsystems of $Y$. Suppose $\phi$ has radius $r$, and suppose $t$ is the maximum length of the words $w_i$ obtained for $Z$. There exists $c > 2r+t$ such that $y_{(-\infty, -c]}$ and $y_{[c, \infty)}$ both extend to points of the $(2r+t)$th order SFT approximation of $Z$.

This lets us apply $\phi$ in the tails of $y$: we define $\psi$ by $\psi(x)_i = \phi(x)_i$ if $w \not\sqsubset x_{[i-c-|w|,i+c+|w|]}$, and $\psi(x)_i = a_j$ if $x_{[i+j,i+j+|w|-1]} = w$ for $j \in [-c-|w|, c+1]$, where the $a_j$ are new symbols not used by $\phi$. Then $\psi$ is of the desired form, up to possibly extending the words surrounding the new word over the $a_j$ to full ones (there are unique such extensions since the words $w_j$ are over disjoint alphabets and $|\B_1(w_j)| = |w_j|$). We increase $m$ by $1$, let $w_{n+m}$ be the new word over the symbols $a_i$, and $x_{n+m} = \psi(y)$. 
\end{proof}

The conditions on the forms of points in Lemma~\ref{lem:NormalForm} of course do not automatically guarantee that the subshift is quasiminimal -- for this, one needs strong additional properties for the sequences $h_i, \ell_i, k_i$ in the left and right tails of the points $x_j$. The precise conditions for this are somewhat complicated. We note that in particular typically the gap sequence $\ell_i$ does not tend to infinity. This is because the tail of a point $x_j$ may occasionally get close to another point $x_k$ with $n < k < j$ (which happens if $\OC{x_k} \subset \OC{x_j}$). In such a case, one will see some short gaps $\ell_i$, since there are typically short gaps in the representation of $x_k$. Conversely, when we see short gaps $\ell_i$ (far enough away from the central pattern $u_j$), this must mean some word $u_k$ must be nearby, where $n < k < j$.


\begin{theorem}
\label{thm:Countable5Starfree}
Suppose $X$ is countable and $\QW(X) \leq 5$. Then the model-checking problem of starfree languages is decidable for $X$.
\end{theorem}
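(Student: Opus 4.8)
The plan is to pass to \emph{normal form} and then run a short structural classification. Since the model-checking problem for a class of regular languages is conjugacy-invariant --- and, more generally, asking it for a clopen partition is equivalent to asking whether $L \cap \B(\psi(X)) \neq \emptyset$ for the factor map $\psi$ induced by the partition, where $\psi(X)$ is again countable, quasiminimal, recursive, and has $\QW(\psi(X)) \le \QW(X)$ (preimages of distinct subshifts are distinct subshifts) --- it suffices, by Lemma~\ref{lem:NormalForm}, to decide $L \cap \B(X) \neq \emptyset$ for a given starfree $L$ over the alphabet of $X$, with $X$ countable, quasiminimal, in normal form, and $\QW(X) \le 5$. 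As in the proofs of Theorems~\ref{thm:Recursive} and~\ref{thm:TupleTheorem}, the algorithm is allowed a finite look-up table of structural data about $X$.

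First I would dispose of the easy cases. If $\QW(X) \le 3$, Proposition~\ref{prop:3IsDecidable} gives $X = \OC{{}^\infty u v u^\infty}$, a countable sofic shift, so $\B(X)$ is regular and $L \cap \B(X) \neq \emptyset$ is decidable by intersecting automata. If $X$ has $n$ minimal subsystems, the $2^n$ unions of them are distinct subshifts, so $2^n \le \QW(X) \le 5$ forces $n \in \{1, 2\}$; and when $n = 2$ there is room for at most one extra point, whose tails can contain no sub-marker blocks (the relevant index range is empty) and are therefore eventually periodic, so $X$ is a finite union of periodic orbits together with at most one eventually biperiodic point, hence sofic. This leaves $n = 1$ and $\QW(X) \in \{4,5\}$, where I would show, by an analysis of the normal form and of the lattice of subshifts of $X$, that $X$ is \emph{either} sofic (which occurs as soon as every extra point is eventually biperiodic) \emph{or} conjugate to $\OC{x_0}$ for a single point $x_0 = x_0^L\, w_0\, x_0^R$ in which the ``central'' marker $w_0$ occurs exactly once and each of the two tails is either $w_1$-periodic or a concatenation of $w_1$-runs separated by isolated copies of one fixed lower marker $w_2$, with run-lengths tending to infinity apart from a bounded, tightly constrained set of exceptional small values. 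The content of the classification is that there is only a single ``unbounded layer'': the pattern of \emph{incomparable} lower markers together with their union, which drives the $\Sigma^0_1$-hardness of the counting problem in Theorem~\ref{thm:CountablePiecewiseTestableLanguages}, costs $\QW = 6$ and is therefore unavailable, so that reduction cannot be re-run.

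For the remaining non-sofic case I would decide $L \cap \B(X) \neq \emptyset$ using aperiodicity. By \cite{McPa71}, starfreeness of $L$ is equivalent to its syntactic monoid $M$ being aperiodic, hence $u^p \equiv_M u^N$ for every word $u$ and every $p \ge N := |M|$. Consequently the set $M(X) = \{[w]_M : w \in \B(X)\}$ is finite, and --- the crucial point --- it is computable: every subword of $x_0$ is $M$-equivalent to a ``canonical'' bounded subword obtained by keeping an $N$-neighbourhood of the unique occurrence of $w_0$, truncating every $w_1$-run to length $\min(\ell, N)$, and keeping at most $N$ of the $w_2$-blocks on each side while collapsing the deep part of each tail (where, by the classification, all but finitely many run-lengths are $\ge N$ and thus contribute the same element of $M$, while the finitely many exceptional small gaps and their positions are read off the look-up table); since $\B(X)$ is recursive, these canonical words can be enumerated, so $M(X)$ can be computed. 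Then $L \cap \B(X) \neq \emptyset$ holds iff $M(X)$ meets the subset of $M$ recognising $L$, a decidable condition; together with the sofic cases this proves the theorem.

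The main obstacle is the structural classification of the admissible $\QW(X) \le 5$ normal forms --- in particular the rigorous argument that no such form exhibits the incomparable-marker pattern, or any other configuration with more than one unbounded layer. Once the shape of $X$ has been pinned down, the decision procedure in the non-sofic case is a routine application of the aperiodicity of starfree syntactic monoids, and the sofic cases are immediate.
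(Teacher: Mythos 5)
There is a genuine gap, and it sits exactly at the step you yourself flag as the main obstacle: the structural classification of the non-sofic case with one minimal subsystem and $\QW(X)=5$. You claim that each tail of the generating point is, apart from finitely many exceptional values, a sequence of $w_1$-runs with lengths tending to infinity separated by isolated copies of a \emph{single} lower marker $w_2$, and you justify this by arguing that a second ``unbounded layer'' would require incomparable markers and hence $\QW(X)=6$ as in Theorem~\ref{thm:CountablePiecewiseTestableLanguages}. That dichotomy is false: the markers can be \emph{nested} rather than incomparable. With $\QW(X)=5$ the lattice of subshifts can be the chain $\emptyset \subset \B^{-1}(w_1^*) \subset \OC{x_2} \subset \OC{x_3} \subset X=\OC{x_4}$ (this is precisely the situation the paper reduces to, normal form with $n=1$, $m=3$), and then the tail of $x_4$ may contain infinitely many occurrences of $w_3$, the segments between consecutive deep occurrences of $w_3$ being longer and longer central windows of $x_3$. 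Those windows contain occurrences of $w_2$ and, crucially, \emph{short} $w_1$-runs clustered around each $w_3$ (the paper points this out right after Lemma~\ref{lem:NormalForm}: gap sequences need not tend to infinity, because the tail of $x_4$ repeatedly passes close to $x_3$). So your canonical-word collapse breaks down: the hypothesis ``all but finitely many run-lengths are $\geq N$'' fails, the exceptional short gaps occur infinitely often and cannot be stored in a finite look-up table, and keeping $N$ blocks per side does not capture all $M$-classes of subwords.

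Your aperiodicity mechanism itself is sound and is essentially the paper's: the paper also fixes $p$ with $u^p\sim u^{p+1}$ in the syntactic monoid and contracts powers, observing that contraction and passing to limits preserve the answer to the model-checking problem when $L=S^*LS^*$. The difference is that the paper carries out the contraction hierarchically to cope with the nested case (its case $j=3$): it first computes bounds $h,h'$ guaranteeing that, deep in the tail, between two occurrences of $w_3$ there are at least $p$ occurrences of $w_2$ separated by runs $w_1^{\geq p}$, then contracts each such run to $w_1^p$, and finally contracts the resulting blocks $(w_2w_1^p)^t$ to $(w_2w_1^p)^p$, obtaining a computable eventually periodic point. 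To repair your argument you would have to prove a corrected classification admitting the nested two-marker tails and extend your canonical-word collapse to a two-level collapse of exactly this kind; as written, the classification you rely on excludes a configuration that actually occurs and is the hardest case of the theorem.
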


\begin{proof}
We perform a small case analysis to reduce to the interesting case that the lattice of subshifts of $X$ is isomorphic to $(\{0,1,2,3,4\}, <)$. As usual, in a model-checking problem we may assume $X$ is not the union of its proper subshifts. Thus, there exists a generator $w \sqsubset x$, which we may take to be isolating. By forbidding $w$ from $X$, we obtain a proper subshift $Y$ with $\QW(Y) \leq 4$.

First, suppose $Y$ is the union of proper subshifts, then we must have $Y = \B^{-1}(a^* + b^*)$ for some (possibly equal) words $a, b$. Namely, if $Y$ contains at least two minimal subsystems $\B^{-1}(a^*)$ and $\B^{-1}(b^*)$, it contains their union (because $Y \subset \OC{x}$), which already gives $4$ subsystems. It is impossible for $Y$ to contain only one minimal subsystem $\{^\infty a ^\infty\}$, as then $Y$ would contain at least two distinct points left- and right-asymptotic to ${^\infty}a^\infty$, and thus their union, which gives $5$ subsystems. If $Y = \B^{-1}(a^* + b^*)$, it is easy to see that $x$ is left asymptotic to ${^\infty}a^\infty$ and right asymptotic to ${^\infty}b^\infty$ or vice versa, and it follows that $X$ is decidable.

Suppose then that $Y$ is not a union of proper subshifts, and thus $Y = \OC{y}$. Since the maximal proper subshift $Z$ of $Y$ satisfies $\QW(Z) \leq 3$, we have $Z = \B^{-1}(u^* v u^*)$ for some words $u, v$. Thus, the subpattern poset of $Z$ is isomorphic to $(\{0, 1, 2\}, <)$, that of $Y$ is isomorphic to $(\{0, 1, 2, 3\}, <)$, and that of $X$ is isomorphic to $(\{0, 1, 2, 3, 4\}, <)$. In particular, $X$ has only one minimal subsystem, so we may assume $X$ is in normal form with $n = 1, m = 3$, since the hardness of model-checking problems is preserved under conjugacy.

Let $L$ be a given starfree language. We may suppose $L = S^* L S^*$, as this does not change the answer to the model-checking problem, and $S^* L S^*$ is starfree whenever $L$ is. Using the algebraic characterization of starfree languages, let $p$ be such that for all words $u$, $u^p \sim u^{p+1}$ in the syntactic monoid of $L$. We show that we can compute the central pattern and repeating patters for an eventually periodic point $x'$ such that some $u \in L$ occurs in $x'$ if and only if some $v \in L$ occurs in $x_n$, which gives the answer since model-checking problems are easy to solve for eventually periodic points, and $x_n$ is a weakly transitive point for $X$.

The important general observations here are the following: First, if $x = y_L u^{p+1} y_R$ for any word $u$ and any tails $y_L$ and $y_R$, then due to the assumption $L = S^* L S^*$ and the choice of $p$, the point $x' = y_L u^p y_R$ satisfies the model-checking problem for $L$ if and only if $x$ does. Furthermore, if $x = \lim_i y_i$, then $x$ satisfies the model-checking problem for $L$ if and only if one of the $y_i$ does by the definition of a limit, since the model-checking problem is about belonging to a particular type of open set. Thus, we may repeatedly contract powers of words in any point of $x$ and pass to a limit, and (a shift of) the resulting limit point will satisfy the model-checking problem for $L$ if and only if $x$ does. 

We explain how to perform the contraction process to the right tail of $x_4$ (in the notation of Definition~\ref{def:NormalForm}), so that we obtain an eventually periodic point in the limit. The left tail is handled symmetrically, to obtain the eventually periodic point for which we then solve the model-checking problem of $L$. Let $j$ be maximal such that $w_j$ occurs in $x_4$ infinitely many times to the right. Our contraction process contracts the right tail of $x_4$ to $w_4(uw_jv)^\infty$ for suitable words $u, v$.

If $j = 1$, then $x_4$ is already eventually periodic to the right, and we are done. Suppose then that $j > 1$. Since $w_4$ is an isolating pattern, we have $j < 4$. We may assume no $w_r$ with $r > j$ appears to the right of the central pattern $w_4$ of $x_4$ by modifying a finite part of the conjugacy (or otherwise restricting our attention to a suitable tail). 

Consider the case $j = 2$. Then, letting $h_i, \ell_i, k_i$ be as in Definition~\ref{def:NormalForm} for the point $x_4$ (whence $h_i = 1$ for all $i$), in fact $k_i = 2$ for all $i$ and $\ell_i \rightarrow \infty$ as $i \rightarrow \infty$. In this case, we can compute $s$ such that $w_2 w_1^{p - i'} w_2$ for $i' > 0$ cannot occur in the right tail of $x_4$ beyond the coordinate $s$. Such $s$ exists, because otherwise there is a right limit point of $x_4$ where $w_2$ occurs twice, but $w_3$ and $w_4$ do not occur, and $X$, being in normal form, does not contain such a point. We can compute $s$ as in the proof of Theorem~\ref{thm:TupleTheorem} using Lemma~\ref{lem:LeqSemidecidable}. Now, we simply contract, one by one, each $\ell_i$ with large enough $i$ (say, $i > s$) to $p$. The new right tail (to the right of $w_4$) is then of the form 
\[ x_4^R = w_1^{\ell_1} w_2 w_1^{\ell_2} w_2 w_1^{\ell_3} w_2 \cdots w_1^{\ell_s} w_2 w_1^p w_2 w_1^p w_2 w_1^p \ldots, \]
and we can compute such a new tail from the description of the language $L$.

The case that is left is $j = 3$. We apply the reasoning of the previous paragraph to $x_3$. We observe that in $x_3$, either the right tail is periodic with repeating pattern $w_1$, or we have $\ell'_i \rightarrow \infty$ where $\ell'_i$ is the sequence of gaps in the representation of $x_3$, and the corresponding claim is true also on the left. In particular, for some $h$ and $h'$, further than $h$ away from $w_3$ in $x_3$, no pattern $w_2 w_1^{p - i'} w_2$ for $i' > 0$ occurs, and in either $(x_3)_{[-h', -h]}$ or $(x_3)_{[h, h']}$, the word $w_2$ occurs at least $p$ times. The existence of $h$ follows as in the previous paragraph, and $h'$ exists because $\OC{x_2} \subset \OC{x_3}$.

We can now compute $s$ such that for each $i \geq s$, either $\ell_i, \ell_{i+1} > p$ and $k_i = 2$, or $k_{i'} = 3$ for some $i'$ with $|i' - i| < h$. Take $s$ further large enough that the distance between two occurrences of $w_3$ is at least $2h' + 2h$. Now, by the assumption on $h$ and $h'$, between any two occurrences of $w_3$, there are at least $p$ occurrences of $w_2$ separated by $w_1^{p + i'}$ for some $i' \geq 0$. We contract each $w_1^{p + i'}$ to $w_1^p$, and then the maximal pattern $(w_2 w_1^p)^t$ between the $w_3$ to $(w_2 w_1^p)^p$. Clearly, we obtain the same distance and intermediate word between any two occurrences of $w_3$, and we can compute this repeating pattern. 
\end{proof}

\subsection{Some open problems}
\label{sec:Opens}

We showed in Section~\ref{sec:Substitutions} that every substitution generates a subshift which has a decidable model-checking problem for regular languages. It seems likely that one can prove the decidability of model-checking for Muller automata similarly. While this would be an interesting result, one could also aim higher, at the model-checking problem for context-free languages. Is the model-checking problem for context-free languages decidable for subshifts generated by substitutions?

One can naturally extend context-free languages to infinite words, just like regular languages. As with regular languages, there are multiple ways to do this, and it is an interesting question whether their model-checking problems are always decidable for substitutive subshifts. We give a rough description of the approach of \cite{EsIv10}, where the \emph{Muller context-free languages} or MCFL, are defined. We take the family of countable finitely branching trees where each node carries a label from a finite set, the possible sets of children of each node are determined by a local rule, and for each infinite childward path, the set of labels occurring infinitely many times is an element of a prescribed set (the Muller condition). The language associated to it is the subset of countable totally ordered sets labeled by $S$ (written $S^\#$), defined by taking the induced ordering and labeling of the frontier, and restricting to words over terminal symbols.

For the model-checking problem, one should further intersect with $S^\Z$. Thus, we are interested in the model-checking problem for the $\mathrm{MCFL}$ languages contained in $S^\Z$. These languages are essentially bi-infinite concatenations of context-free languages governed by a Muller regular language.

It is known to be decidable whether a subshift generated by a substitution is finite. Recently, this was shown for the more general class of substitutive images of subshifts generated by substitutions \cite{Mi11,Du13}, called HD0L. The image of a quasiminimal subshift in a non-erasing substitution is quasiminimal, and thus these subshifts fit into our framework (at least in the case of syndetic long symbols). Which model-checking problems are decidable for this class?


In the topic of countable quasiminimal subshifts, it seems that the proof of Theorem~\ref{thm:Countable5Starfree} extends to show that in general, when the lattice of subshifts of $X$ is totally ordered and $X$ is countable and recursive, then its model-checking problem for starfree languages is decidable. However, the contraction process becomes harder to describe.

The condition $\QW(X) = 3$ automatically means $X$ is at least weakly transitive and its maximal proper subshift is minimal. Further, subshifts with $\QW(X) = 3$ can be seen to split into the following $4$ types:
\begin{enumerate}
\item $X$ is countable,
\item $X$ is uncountable but contains an isolated point,
\item $X$ contains no isolated point but its maximal proper subshift is finite, or
\item $X$ contains no isolated point and its maximal proper subshift is infinite.
\end{enumerate}
As Proposition~\ref{prop:3IsDecidable} shows, decidability questions are trivial for subshifts of type 1, and the proof of Theorem~\ref{thm:TransitiveLocallyTestableLanguages} shows that almost nothing is decidable for subshifts of type 4.

\begin{problem}
Which model-checking problems are decidable for subshifts of type 2 or 3?
\end{problem}

We conjecture that in the countable case, the model-checking problem is decidable for locally testable languages in general.

\begin{conjecture}
\label{con:LocallyTestableDecidable}
Let $X$ be a countable quasiminimal recursive subshift. Then the model-checking problem of $X$ for locally testable languages is decidable.
\end{conjecture}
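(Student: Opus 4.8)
The plan is to reduce the problem to computing a single finite invariant of the subshift and then to extract that invariant from the normal form of Lemma~\ref{lem:NormalForm}. As the model-checking problem for a class of languages and a subshift $X$ is, up to the choice of clopen partition, the problem of deciding $\B(\phi(X)) \cap L \neq \emptyset$ for a computable block code $\phi$, and since a factor of a countable quasiminimal subshift is again countable and quasiminimal (the map $Z \mapsto \phi^{-1}(Z)$ injects the subsystems of $\phi(X)$ into those of $X$) and recursive when $\phi$ is a computable block code, it suffices to decide, given a countable quasiminimal recursive subshift $Y$ and a locally testable language $L$ over the alphabet of $Y$, whether $\B(Y) \cap L \neq \emptyset$. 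By conjugacy-invariance of the model-checking problem (the move used in the proofs of Theorem~\ref{thm:TupleTheorem} and Theorem~\ref{thm:Countable5Starfree}) together with Lemma~\ref{lem:NormalForm} we may assume $Y$ is in normal form; and by induction on $\QW(Y)$, with Corollary~\ref{cor:SubRecursive} keeping all subshifts recursive, we may assume $Y$ is not the union of its proper subshifts (otherwise split the instance over the finitely many proper subshifts). Then $Y = \OC{x_{n+m}}$ for a weakly transitive isolated point $x = x_{n+m}$ with $\B(x) = \B(Y)$ whose tails have the syllable structure of Definition~\ref{def:NormalForm}.

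Next I would invoke the combinatorial description of local testability: $L$ is $k$-locally testable for a computable $k$, meaning that for $|v| \ge k$ the membership $v \in L$ depends only on the \emph{$k$-type} $\theta_k(v) := (\mathrm{pref}_{k-1}(v),\, \mathrm{Fact}_k(v),\, \mathrm{suff}_{k-1}(v))$, while $L \cap T^{<k}$ is a finite set. Since $\B_{k-1}(Y)$ and $\B_k(Y)$ are finite and, by recursiveness of $Y$, computable, there are finitely many $k$-types, all effectively enumerable, and $\B(Y) \cap L \neq \emptyset$ holds iff some short word of $Y$ lies in $L$ (decidable) or the finite set $\Theta_k(Y) := \{\theta_k(v) \mid v \sqsubset Y,\ |v| \ge k\}$ meets the admissible class of $L$. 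The whole problem thus reduces to \textbf{computing the finite set $\Theta_k(Y)$}.

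To compute $\Theta_k(Y)$ I would split the long factors of $x$ according to their position relative to the unique central occurrence of $w_{n+m}$. A factor straddling the centre has $k$-type $(p, \B_k(Y), s)$ with $p, s$ ranging over the (computable, finite) sets of $(k-1)$-factors occurring cofinally in the two tails, and joint realizability of such a pair is decidable using recursiveness of $Y$. A factor lying inside one tail, say $x^R = (w_{h_1})^{\ell_1} w_{k_1} (w_{h_2})^{\ell_2} w_{k_2} \cdots$ with $h_i \in [1,n]$, $k_i \in [n+1, n+m-1]$, $\ell_i > 1$, is analysed through this syllable structure: the $k$-window content of a sub-word depends on which syllables it meets and on an exponent $\ell_i$ only through the finite data "$\ell_i |w_{h_i}| \bmod$ (something small)" and "$\ell_i |w_{h_i}| \ge k$ or not", so the collection of factor sets realized by sub-words of $x^R$ is finite. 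Realizing a prescribed $k$-type $(p, F, s)$ \emph{exactly} means placing such a sub-word inside a maximal run of positions all of whose $k$-windows lie in $F$; one enumerates the finitely many combinatorial types of these "$F$-restricted runs" (each finite, or one- or two-sidedly infinite) and for each decides realizability: trivially by brute force for finite runs, and for an infinite run $R$ by testing whether $\mathrm{Fact}_k(R) = F$ and whether occurrences of $p$ and of $s$ can be placed with enough room between them to collect all of $F$. The union of all these finite sets is $\Theta_k(Y)$, completing the algorithm.

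The main obstacle is precisely the step just sketched: proving that $x^R$ and $x^L$ admit only finitely many — and computably describable — combinatorial types of sub-words and of $F$-restricted runs. The difficulty, flagged in the remark after Lemma~\ref{lem:NormalForm}, is that the gap exponents $\ell_i$ need \emph{not} tend to infinity; the tail of $x_{n+m}$ may repeatedly come close to a lower point $x_k$ with $n < k < n+m$, producing clusters of short gaps near the copies of $w_k$, so one cannot argue that "all factor sets stabilize past a finite prefix of the tail." Making the bookkeeping precise needs the finer structural analysis of normal forms deferred to \cite{Quasiminimals} — essentially a recursion on the nesting $\OC{x_k} \subset \OC{x_j}$ of the points $x_j$ — and this, exactly as in the starfree case of Theorem~\ref{thm:Countable5Starfree} where "the contraction process becomes harder to describe," is where the real content of the argument lies.
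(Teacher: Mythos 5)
The statement you are proving is stated in the paper only as Conjecture~\ref{con:LocallyTestableDecidable}; the paper offers no proof of it, so there is nothing to compare your argument against except its own internal completeness -- and by your own admission it is not complete. The reductions in your first two paragraphs are sound and essentially cost-free: passing to a factor (which stays countable, quasiminimal and recursive), splitting over proper subshifts by induction on $\QW$, recoding into the normal form of Lemma~\ref{lem:NormalForm}, and observing that for a $k$-locally testable $L$ the whole problem reduces to computing the set $\Theta_k(Y)$ of realized $k$-types. Note, though, that finiteness of $\Theta_k(Y)$ is trivial (it is a subset of a fixed finite set determined by $\B_{k-1}(Y)$ and $\B_k(Y)$), and realizability of any given type is semidecidable by enumeration of $\B(Y)$; so the entire content of the conjecture is concentrated in deciding \emph{non-realizability} of a type, i.e.\ in your third paragraph.

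That is exactly where the argument has a genuine gap, which you flag yourself but do not close. The claim that the sub-words of $x^R$ fall into ``finitely many combinatorial types of $F$-restricted runs'' whose realizability can be decided is not justified by the normal form: Definition~\ref{def:NormalForm} constrains only the syllable shape of the tails, not the sequences $(h_i,k_i,\ell_i)$ themselves, and as the remark after Lemma~\ref{lem:NormalForm} points out, the exponents $\ell_i$ need not tend to infinity -- short gaps recur whenever the tail approaches a lower point $x_r$. Deciding which bounded-gap configurations occur arbitrarily far out in the tail (which is what exact realization of a type $(p,F,s)$ requires) is precisely the kind of cofinal-occurrence question that Theorems~\ref{thm:CountableCodedLanguages} and~\ref{thm:CountablePiecewiseTestableLanguages} show can be $\Sigma^0_1$-complete for countable quasiminimal subshifts when the language class is only slightly enlarged (modular counting, or counting visits to a clopen set); so one cannot wave at recursiveness of $Y$ here, and one must exploit some structural feature that locally testable languages cannot see but renewal and piecewise testable languages can. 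Your appeal to the deferred structure theory of \cite{Quasiminimals} is an appeal to results not available in this paper, and even granting them you have not specified the recursion on the nesting $\OC{x_r} \subset \OC{x_j}$ that would make the bookkeeping terminate. As it stands the proposal is a plausible reduction of the conjecture to its hard core, not a proof of it.
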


Another interesting direction in the study of decidability when restricting to minimal systems, but considering the model-checking problem of harder languages. We gave an example of a minimal subshift whose model-checking problem is $\Sigma^0_1$-complete for context-free languages accepted by a deterministic pushdown automaton. For this, we used a subshift of the well-known Dyck shift. The high-level idea giving nonregularity in this example is the balancing of brackets, which inherently requires the use of a full stack. We do not know whether there are examples where the stack contains only unary symbols. A slightly more formal way to ask this question is the following.

\begin{question}
Is the model-checking problem of languages accepted by one-counter machines decidable for every minimal $\Pi^0_1$ subshift?
\end{question}

The precise definition of one-counter machines used in the question is left implicit -- it is part of the question, as we do not know what implications such choices have. The question is also interesting for \emph{strictly ergodic subshifts} -- minimal subshifts where words occur with well-defined frequencies. Here, one might (perhaps mistakenly) expect that on long enough words, the counter is decremented or incremented at a roughly constant rate. All we know about one-counter machines is that a variant of the language $a^n b^n$ is unlikely to yield a hard model-checking instance, since for any word $w$, $w^n$ can occur in an infinite minimal subshift for only finitely many $n$.

Of course, there are many natural classes in both formal language theory and complexity theory which fall between regular and context-free languages. Further decidability and undecidability results for the model-checking problem for minimal systems for any such class might be an interesting research direction.

\section*{Acknowledgements}

The author would like to thank Ilkka T\"orm\"a for detailed discussions on the topic. The author was partially supported by the Academy of Finland Grant 131558, CONICYT Proyecto Anillo ACT 1103, by Basal project CMM, Universidad de Chile and the FONDECYT project 3150552.

\bibliographystyle{plain}
\bibliography{../../bib/bib}{}

\end{document}